\numberwithin{equation}{section}
\DeclareMathOperator\Hom{Hom}
\newcommand{\PP}{{\mathbb P}}
\newcommand{\R}{{\mathbb R}}
\newcommand{\C}{{\mathbb C}}
\newcommand{\N}{{\mathbb N}}
\newcommand{\Q}{{\mathbb Q}}
\newcommand{\Z}{{\mathbb Z}}
\newcommand{\Zsstar}{\mathbb{Z}_s^\times}
\newcommand{\dpp}{\delta_{\PP}}
\newcommand{\rhot}{\rho_\theta}
\newcommand{\mnnorm}[3][M]{\left\|#2\right\|_{#1,#3}}
\newcommand{\nnorm}[1]{\left\|#1\right\|_N}
\newcommand{\Cnorm}[2]{\left\|#2\right\|_{C^{#1}}}
\newtheorem{theo}{{\sc \bf Theorem}}[section]
\newtheorem{lem}[theo]{{\sc \bf Lemma}}
\newtheorem{prop}[theo]{{\sc \bf Proposition}}
\newenvironment{defin}{\medskip\noindent{\bf Definition:\/} }{\medskip}
\begin{document}

\title{Noncommutative Geometry of Hensel-Steinitz Algebras}

\author[Hebert]{Shelley Hebert}
\address{Department of Mathematics and Statistics,
Mississippi State University,
175 President's Cir. Mississippi State, MS 39762, U.S.A.}
\email{sdh7@msstate.edu}

\author[Klimek]{Slawomir Klimek}
\address{Department of Mathematical Sciences,
Indiana University-Purdue University Indianapolis,
402 N. Blackford St., Indianapolis, IN 46202, U.S.A.}
\email{sklimek@math.iupui.edu}

\author[McBride]{Matt McBride}
\address{Department of Mathematics and Statistics,
Mississippi State University,
175 President's Cir., Mississippi State, MS 39762, U.S.A.}
\email{mmcbride@math.msstate.edu}

\author[Peoples]{J. Wilson Peoples}
\address{Department of Mathematics,
Pennsylvania State University,
107 McAllister Bld., University Park, State College, PA 16802, U.S.A.}
\email{jwp5828@psu.edu}

\date{\today}

\begin{abstract}
We discuss various aspects of noncommutative geometry of smooth subalgebras of Hensel-Steinitz algebras. In particular we study the structure of derivations and $K$-Theory of those smooth subalgebras.
\end{abstract}

\maketitle
\section{Introduction}
In this paper we study natural C$^*$-algebras which are related to multiplication maps of the ring of $s$-adic integers $\Z_s$. We name those algebras Hensel-Steinitz algebras. They are defined as crossed products by endomorphisms \eqref{alpha_def} of  $C(\Z_s)$ induced by $s$-adic multiplications on the space of $s$-adic integers. We provide a short appendix summarizing relevant properties of crossed products by monomorphisms with hereditary ranges. Hensel-Steinitz algebras are one of many interesting examples of C$^*$-algebras with connections to number theory, a subject of increasing interest, see for example \cite{CuLi}.

 Alternative C$^*$-algebras associated to endomorphisms of groups in general and to $s$-adic multiplication in particular have been considered before, see in particular \cite{CuntzVershik},  \cite{Hirshberg} and \cite{LarsenLi}  However, in those papers, the endomorphisms of  $C(\Z_s)$ are not the same as ours and they come from the endomorphisms $x\mapsto (x-x\text{ mod }s)/s$ of $\Z_s$. Consequently, the entire analysis and the properties of the corresponding C$^*$-algebras differ significantly.
 
In this paper we concentrate mostly on smooth subalgebras of Hensel-Steinitz algebras, which in noncommutative geometry capture a smooth structure on such ``quantum spaces". Smooth algebras play an important role in cyclic cohomology theory \cite{Me} and other noncommutative geometry applications \cite{Connes}. Further motivation for studying smooth subalgebras and general frameworks can be found in \cite{BC} and \cite{Bo}.

This work is a continuation of previous papers on the subject of noncommutative geometry of various examples of quantum spaces, in particular \cite{H}, \cite{KMP2}, \cite{KMP3}, \cite{KMP4}. A companion paper \cite{HKMP} discusses further generalizations to other multiplication maps in the ring of $p$-adic integers.

We verify that the smooth subalgebras studied in this paper are dense $*$-subalgebras closed under the holomorphic functional calculus and complete in their own stronger locally convex topology. We show also that they are closed under the smooth functional calculus of self-adjoint elements. 

Next, we look at derivations on smooth subalgebras of Hensel-Steinitz algebras. Derivations naturally arise in differential geometry as vector fields and play an analogous role in noncommutative geometry. It was observed in \cite{BEJ} that derivations on smooth subalgebras, modulo inner derivations, often have geometric significance. Using ``generalized'' Toeplitz operators we provide detailed structure of the space of continuous derivations on smooth subalgebras of Hensel-Steinitz algebras in Theorem \ref{der_theo}, a geometrically interesting and unexpected result.

Finally, we discuss $K$-Theory of Hensel-Steinitz algebras. $K$-Theory is a key invariant in the noncommutative topology of quantum spaces. We relate Hensel-Steinitz algebras with some familiar C$^*$-algebras and compute their $K$-Theory. By stability under the holomorphic functional calculus, smooth subalgebras have the same $K$-Theory as the Hensel-Steinitz algebras.

The paper is organized as follows.  In section 2 we define Hensel-Steinitz algebras and study their structure.  In section 3 we discuss smooth subalgebras of Hensel-Steinitz algebras and their properties.  In section 4 we classify continuous derivations on the smooth subalgebras from section 3.  In section 5 we discuss the $K$-Theory of Hensel-Steinitz algebras.  Finally, in section 6, the appendix, we review crossed products by monomorphisms with hereditary ranges.

\section{The Hensel-Steinitz Algebras}

For $s\in\N$ with $s\ge2$, the space of $s$-adic integers $\Z_s$ is defined as consisting of infinite sums:
\begin{equation*}
\Z_s= \left\{x=\sum_{j=0}^\infty x_j s^j: 0\le x_j\le s-1, j=0,1,2,\ldots\right\}\,.
\end{equation*}
We call the above expansion of $x$ its $s$-adic expansion.  Note that $\Z_s$ is isomorphic with the countable product $\prod \Z/s\Z$ and so, when the product is equipped with the Tychonoff topology, it is a Cantor set. It is also a metric space with the usual norm, $|\cdot|_s$, which is defined as follows: 
\begin{equation}\label{sadicnorm}
|0|_s=0 \textrm{ and, if }x\ne 0, |x|_s=s^{-n},
\end{equation}
where $x_n$ is the first nonzero term in the above $s$-adic expansion.

Moreover, $\Z_s$ is an abelian ring with unity with respect to addition and multiplication with a carry and we have:
\begin{equation*}
\Z_s\cong\lim_{\longleftarrow}\Z/s^n\Z\,,
\end{equation*}
for more details, see \cite{monothetic}.

The main objects of interest in this paper are the maps, $\alpha:C(\Z_s)\to C(\Z_s)$ defined by
\begin{equation}\label{alpha_def}
(\alpha f)(x) = \left\{
\begin{aligned}
&f\left(\frac{x}{s}\right) &&\textrm{ if }s|x \\
&0 &&\textrm{else}
\end{aligned}\right.
\end{equation}
and $\beta:C(\Z_s)\to C(\Z_s)$ given by
$$(\beta f)(x) = f(sx).$$
Notice that both maps are endomorphisms.  It is clear that $\alpha$ is an injection and we have the following relation:
\begin{equation*}
((\beta\alpha)f)(x) = \beta(\alpha f )(x) = (\alpha f)(sx) = f(x)
\end{equation*}
for any $f\in C(\Z_s)$.  Additionally, we have 
\begin{equation*}
\beta(1)=1 \textrm{ and } (\alpha\beta)f=\alpha(1)f.
\end{equation*}

Notice that
\begin{equation*}
\alpha (1)(x) = \left\{
\begin{aligned}
&1&&\textrm{ if }s|x \\
&0 &&\textrm{else}
\end{aligned}\right.
\end{equation*}
is the characteristic function of the subset of the ring $\Z_s$ consisting of those elements that are divisible by $s$. It follows from the formula for $\alpha$ that the range of $\alpha$ is equal to $\alpha(1)C(\Z_s)$. Thus $\alpha$ is a monomorphism with hereditary range and in particular $\beta$ satisfies equation \eqref{beta_def_ref} from the appendix. 

We define the Hensel-Steinitz algebra $HS(s)$ to be the following crossed product:
\begin{equation*}
HS(s) := C(\Z_s)\rtimes_\alpha\N\,,
\end{equation*}
see the appendix for details on crossed products by monomorphisms with hereditary range.

It is useful to realize $HS(s)$ as a concrete C$^*$-algebra. Let $H=\ell^2(\Z)$ and $\{E_l\}_{l\in\Z}$ be its canonical basis, then since $\Z$ is a dense subset of $\Z_s$, the mapping $C(\Z_s)\to B(H)$, $f\mapsto M_f$, given by 
$$M_fE_l = f(l)E_l$$
 is a faithful representation of $C(\Z_s)$.  
Let $V$ be the $s$-adic shift operator defined on $H$ via
\begin{equation*}
VE_l = E_{sl}\,.
\end{equation*}
A simple calculation verifies that
\begin{equation*}
V^*E_l = \left\{
\begin{aligned}
&E_{l/s} &&\textrm{ if } s|l \\
&0 &&\textrm{else.}
\end{aligned}\right.
\end{equation*}

Given $l\in\Z$, write $l=s^ml'$ and define the following diagonal operator in $H$ :
\begin{equation*}
\mathbb{P}E_{s^ml'} = mE_{s^ml'},\ \ \ \mathbb{P}E_{0} = 0.
\end{equation*}
It is clear that $\mathbb{P}$ is an unbounded self-adjoint operator on $H$.  We need this operator for the proof of the next theorem, however it is used substantially more in the upcoming sections.

\begin{theo}
There is an isomorphism between $C^*$-algebras:
\begin{equation*}
HS(s) = C(\Z_s)\rtimes_\alpha\N \cong C^*(V,M_f:f\in C(\Z_s))\,.
\end{equation*}
\end{theo}
\begin{proof}
To prove this we must first verify the relations between $V$ and $M_f$. From the definition of $V$, an immediate calculation shows that $V^*V=1$. 
Similarly we have that
\begin{equation*}
VM_fV^*E_l =\left\{
\begin{aligned}
&VM_f\left(\frac{1}{s}\right)E_{l/s} &&\textrm{ if } s|l \\
&0 &&\textrm{ else}
\end{aligned}\right.= \left\{
\begin{aligned}
&f\left(\frac{l}{s}\right)E_l &&\textrm{ if } s|l \\
&0 &&\textrm{ else}
\end{aligned}\right.=M_{\alpha f}E_l\,.
\end{equation*}
Thus $V$ and $M_f$ satisfy the relations of the crossed product of Stacey, see the appendix. Also, it follows from a direct calculation (and from general considerations in the appendix) that we have:
\begin{eqnarray}\label{strong_comm}
M_fV= VM_{\beta f}.
\end{eqnarray}

The algebra $C^*(V,M_f:f\in C(\Z_s))$ carries a natural circle action defined by maps $\rhot$ given by
\begin{equation}\label{rhotheta}
\rhot (a)=e^{2\pi i\theta \mathbb P} a e^{-2\pi i\theta\mathbb P}\,.
\end{equation}
A calculation on basis elements of $H$ shows that $\rhot(V)=e^{2\pi i\theta}V$ and that $\rhot(M_f)=M_f$.  It follows that $\rhot$ is a continuous 1-parameter family of automorphisms of $C^*(V,M_f)$ with $f\in C(\Z_s)$. Additionally, we infer from the relation $VV^*=\alpha(1)$ that the $\rhot$ invariant elements of $C^*(V,M_f)$ are precisely the multiplication operators $M_f$.

Next we consider an expectation in $C^*(V,M_f)$ onto the invariant subalgebra $C^*(M_f)\subseteq C^*(V,M_f)$. Define the map $E:C^*(V,M_f)\to C^*(V,M_f)$ by
\begin{equation*}
E(a)=\int_0^1 \rhot(a)\,d\theta \,.
\end{equation*}
Clearly, $E$ is an expectation in $C^*(V,M_f)$ onto $C^*(M_f)\cong C(\Z_s)$.

For any $a\in C^*(V,M_f)$ given by a finite sum
\begin{equation*}
a=\sum_{n\ge0}V^nM_{f_n} + \sum_{n<0}M_{f_n}(V^*)^{-n}\,,
\end{equation*}
we get
\begin{equation}\label{FourierCoeff}
M_{f_n} = \left\{
\begin{aligned}
&E((V^*)^{-n}a) &&\textrm{ if }n\ge0\\
&E(aV^n) &&\textrm{ if }n<0\,.
\end{aligned}\right.
\end{equation}
In, particular, we have that 
$$\|M_{f_0}\|\leq \|a\|,$$
which is the O'Donovan condition from the appendix. Finally, as mentioned above, the density of $\Z$ in $\Z_s$ implies that the representation is faithful.  Thus  we have $HS(s) \cong C^*(V,M_f)$.
\end{proof}

Below we consider the Hensel-Steinitz algebra $HS(s)$ as the concrete C$^*$-algebra from the above proposition. To understand the structure of $HS(s)$ we first look at an ideal, denoted by $I_s$, which can be defined in the following way. Consider the map $C(\Z_s) \to B(\ell^2(\Z))$ defined by $f \mapsto m_f$, where $m_f$ is given by  
$$m_f E_l = f(0) E_l.$$
Additionally, let $v \in B(\ell^2(\Z))$ denote the standard bilateral shift 
$$vE_l = E_{l+1}.$$ 
It is easy to check that sending $V \mapsto v$ and $M_f \mapsto m_f$ gives rise to a representation of $HS(s)$ onto $C^*(v,m_f) \cong C(\R/\Z)$. Denote this surjective representation by 
$$\pi_0: HS(s) \to C(\R/\Z).$$ 
We define $I_s$ to be the ideal which is the kernel of this representation:
$$
I_s : = \textup{Ker } \pi_0. 
$$
The ideal $I_s \subseteq HS(s)$ can also be described as the subalgebra generated by the operators $V^m M_{f}$, $m\geq 0$
and $M_{f}(V^*)^{-m}$, $m<0$ such that $f\in C(\Z_s)$ with $f(0)=0$.

Let $\Z_s^\times$ be the unit ball in $\Z_s$.  Notice that if $x\in \Z_s$ has the $s$-adic expansion:
\begin{equation*}
x=x_0 + x_1s + x_2s^2+\cdots
\end{equation*}
then, $x\in\Z_s^\times$ if and only if $x_0\neq0$. $\Z_s^\times$ is both a closed and open subset of $\Z_s$. Let $\mathcal{K}$ be the ideal of compact operators in $B(H)$.

\begin{theo}
\label{ideal isomorphism}
There is the following isomorphism of C$^*$-algebras:
\begin{equation*}
I_s\cong C(\Z_s^\times)\otimes\mathcal{K}\,.
\end{equation*}
\end{theo}
\begin{proof}
As noticed above, we have:
\begin{equation*}
I_s \cong C^*(V^mM_{f} : f\in C(\Z_s), f(0)=0).
\end{equation*}
Suppose $f\in C(\Z_s)$ with $f(0)=0$.  For $x\neq 0$ we can uniquely write $f(x)=f(s^m\tilde{x})$ where $x=s^m\tilde{x}$ and $|\tilde{x}|_s=1$, so $\tilde{x}\in \Z_s^\times$.  Consequently, we have an isomorphism of C$^*$-algebras  
\begin{equation*}
\{f\in C(\Z_s):f(0)=0\} \cong \left\{F\in C(\Z_{\ge0}\times \Zsstar):\lim_{m\to\infty} F(m,\cdot)=0\right\} := C_0(\Z_{\ge0}\times\Zsstar)
\end{equation*}
where $f \mapsto F$ is given by $f(s^m\tilde{x})=F(m,\tilde{x})$.
Additionally, we also have the following identification:
\begin{equation*}
C_0(\Z_{\ge0}\times\Zsstar) \cong c_0(\Z_{\ge0})\otimes C(\Z_s^\times)\,.
\end{equation*}

%

Given nonzero $l$, decompose $l=s^ml'$, with $l'\in\Z\subseteq\Z_s$, $|l'|_s=1$. For $\lambda \in C(\Zsstar)$, define the multiplication operator $M_\lambda :H\to H$ by the following
\begin{equation}\label{Mphi}
M_\lambda E_l=\left\{
\begin{aligned}
&\lambda(l')E_l &&\textrm{if }l\ne 0 \\
&0 &&\textrm{if }l=0.
\end{aligned}\right.
\end{equation}
Similarly, given a sequence $\chi=\{\chi(l)\} \in c_0(\Z_{\ge0})$ and the decomposition above, define $M_\chi:H\to H$ by
\begin{equation*}
M_\chi E_l = \left\{
\begin{aligned}
&\chi(m)E_l &&\textrm{if }l\ne 0 \\
&0 &&\textrm{if }l=0.
\end{aligned}\right.
\end{equation*}
Then, we have the following isomorphism of $C^*$-algebras:
\begin{equation*}
C^*(M_f : f\in C(\Z_s), f(0)=0) \cong C^*(M_\lambda M_\chi : \lambda\in C(\Zsstar), \chi\in c_0(\Z_{\ge0}))\cong C(\Zsstar)\otimes c_0(\Z_{\ge0})\,.
\end{equation*}

Notice we have the following commutation relations: 
\begin{equation} 
\label{MxMphirel}
M_\lambda M_\chi = M_\chi M_\lambda \text{ and } M_\lambda V = VM_\lambda.
\end{equation}  
Using the relations in equation \eqref{MxMphirel}  the following two C$^*$-algebras are isomorphic:
\begin{equation*}
\begin{aligned}
&C^*(V^nM_{f}:f\in C(\Z_s), f(0)=0, n\in \Z_{\ge0}) \\  &C^*(M_{\lambda}:\lambda\in C(\Zsstar))\otimes C^*(V^nM_\chi: n\in \Z_{\ge0}, \chi\in c_0(\Z_{\ge0})).
\end{aligned}
\end{equation*}

We claim the following isomorphism:
\begin{equation*}
C^*(V^nM_\chi: n \in \Z_{\ge0}, \chi\in c_0(\Z_{\ge0})) \cong \mathcal{K}\,.
\end{equation*}
For $j\ge 0$ let $\chi_j\in c_0(\Z_{\ge0})$ be a sequence given by $\chi_j=\{\delta_{ij}\}_{i\ge 0}$ and define the operators $P_{ij}$ by
\begin{equation}\label{Pij}
P_{ij}=\left\{
\begin{aligned}
&V^{i-j} M_{\chi_j} &&\textrm{if }i\ge j \\
&M_{\chi_i}(V^*)^{j-i} &&\textrm{if }j\ge i\,.
\end{aligned}\right.
\end{equation}
Clearly, $\{P_{ij}\}$ form the system of units of $\mathcal{K}$, thus we have
\begin{equation*}
C^*(P_{ij}) \cong \mathcal{K}\,.
\end{equation*}
Finally, $C^*(P_{ij})$ and $C^*(V^nM_\chi: n \in \Z_{\ge0}, \chi\in c_0(\Z_{\ge0}))$ clearly coincide and thus our claim holds, finishing the proof.
%
\end{proof}
This theorem leads to the short exact sequence describing the structure of $HS(s)$:
$$
0 \to C(\Z_s^\times)\otimes\mathcal{K} \to HS(s) \to C(\R/\Z) \to 0,
$$
which is instrumental in further analysis of the Hensel-Steinitz algebra. The quotient map in the above short exact sequence will be denoted by 
$$q:HS(s)\to C(\R/\Z).$$

For completeness, we briefly describe two more structural results regarding $HS(s)$. The first observation realizes $HS(s)$ as a full corner of a regular crossed product by an automorphism C$^*$-algebra, see also \cite{St} and \cite{Laca} for general constructions of this type. The second statement identifies $HS(s)$ with a subalgebra of the tensor product $C(\Z_s^\times)\otimes\mathcal{T}$, where $\mathcal{T}$ is the Toeplitz algebra. This identification motivates the concept of ``generalized'' Toeplitz operators introduced in the next section.

Consider the space
\begin{equation*}
\Q_s:= \left\{x=\sum_{j=N}^\infty x_j s^j: N\in\Z, 0\le x_j\le s-1, x_N\ne 0\right\}\,.
\end{equation*}
It is an abelian ring with unity with respect to the usual addition and multiplication with a carry to the right.
Equipped with the $s$-adic norm given by equation \eqref{sadicnorm}, $\Q_s$ is a locally compact space containing $\Z_s$. The key property of the ring $\Q_s$ is that $s$ is now invertible. The analog of the endomorphism $\alpha$ is an automorphism $\tilde\alpha:C_0(\Q_s)\to C_0(\Q_s)$ given by:
\begin{equation*}
(\tilde\alpha f)(x)=f\left(\frac{x}{s}\right).
\end{equation*}

Consider the crossed product $C_0(\Q_s)\rtimes_{\tilde\alpha}\Z$
of $C_0(\Q_s)$ by the automorphism $\tilde\alpha$. Notice that the characteristic function $1_{\Z_s}\in C_0(\Q_s)\subseteq C_0(\Q_s)\rtimes_{\tilde\alpha}\Z$ is a projection in $C_0(\Q_s)\rtimes_{\tilde\alpha}\Z$ and we can identify $HS(s)$ with the corner:
\begin{equation}\label{corneriso}
HS(s)\cong 1_{\Z_s}\cdot \left(C_0(\Q_s)\rtimes_{\tilde\alpha}\Z\right)\cdot 1_{\Z_s}.
\end{equation}
Indeed, we have inclusion $C(\Z_s)\ni f\mapsto f\cdot 1_{\Z_s}\in 1_{\Z_s}\cdot \left(C_0(\Q_s)\rtimes_{\tilde\alpha}\Z\right)\cdot 1_{\Z_s}$. Additionally, if $U$ is the unitary generating $\Z$ in the crossed product $C_0(\Q_s)\rtimes_{\tilde\alpha}\Z$ then $V:=1_{\Z_s}\cdot U\cdot 1_{\Z_s}$ is an isometry in the corner $1_{\Z_s}\cdot \left(C_0(\Q_s)\rtimes_{\tilde\alpha}\Z\right)\cdot 1_{\Z_s}$. It is easy to verify that the defining relations for the crossed product $C(\Z_s)\rtimes_\alpha\N$ are satisfied and indeed we have an isomorphism in equation \eqref{corneriso}.

Moreover, since under the map $x\mapsto x/s$ in $\Q_s$, the smallest invariant set containing $\Z_s$ is $\Q_s$ itself, it is easy to deduce that the smallest closed two-sided ideal containing the corner $1_{\Z_s}\cdot \left(C_0(\Q_s)\rtimes_{\tilde\alpha}\Z\right)\cdot 1_{\Z_s}$ is the full algebra $C_0(\Q_s)\rtimes_{\tilde\alpha}\Z$ and thus the corner is full.

Our second observation relates Hensel-Steinitz algebra with the Toeplitz algebra. Notice that for the tensor product $C(\Z_s^\times)\otimes\mathcal{T}$, we have 
the following short exact sequence:
$$
0 \to C(\Z_s^\times)\otimes\mathcal{K}\to C(\Z_s^\times)\otimes\mathcal{T} \to C(\Z_s^\times\times\R/\Z) \to 0.
$$
Observe that the left-most term in the above sequence is isomorphic to $I_s$ and that $C(\R/\Z)$ is the subalgebra of $C(\Z_s^\times\times\R/\Z)$ of functions independent of the first variable.
Denote the quotient map in the above short exact sequence by 
$$\sigma:C(\Z_s^\times)\otimes\mathcal{T} \to C(\Z_s^\times\times\R/\Z).$$
Then, using formulas \eqref{MxMphirel}, it is not difficult to see that we have:
\begin{equation*}
HS(s)\cong \sigma^{-1}(C(\R/\Z))=\{a\in C(\Z_s^\times)\otimes\mathcal{T}: \sigma(a)\subseteq C(\R/\Z)\}.
\end{equation*}
Notice in particular that $I_s\cong C(\Z_s^\times)\otimes\mathcal{K}=\sigma^{-1}(0)\subseteq \sigma^{-1}(C(\R/\Z))\cong HS(s)$.

\section{Smooth Subalgebras}
In this section we introduce natural dense $*$-subalgebras of Hensel-Steinitz algebras $HS(s)$ and we verify that they satisfy the properties of smooth subalgebras of \cite{BC}. Namely, we show that they are closed under the holomorphic functional calculus, they are complete in their own stronger locally convex topology and that they are closed under the smooth functional calculus of self-adjoint elements.  It should be noted that in our case, the definition of smooth subalgebras doesn't fit the entire framework of \cite{BC}, nor \cite{Bo}, and thus requires verifications of all the properties.  Studying these smooth subalgebras is a necessity to classifying derivations in the next section.

\subsection{Smooth Ideal $I_s^\infty$.}
Similarly to the approach to smooth subalgebras in \cite{KMP2} and \cite{KMP3}, we first define a smooth subalgebra of the ideal $I_s$. This is done by using Fourier coefficients of elements of $HS(s)$ with respect to the circle action given by equation \eqref{rhotheta}.

For $a\in HS(s)$ and $n\in\Z$ let $f_n\in C(\Z_s)$ be defined by the equation \eqref{FourierCoeff}.
If $a\in I_s$ we again use the decomposition of nonzero elements of $\Z_s$ as $s^m x$, $m\in \Z_{\ge0}$, $|x|_s=1$ and we put 
$$F_n(m,x)=f_n(s^m x).$$   
Functions $F_n\in C_0(\Z_{\ge0}\times \Zsstar)$ are called the Fourier coefficients of $a\in I_s$. By the usual Fourier series theory \cite{K}, the coefficients $F_n$ fully determine $a$. 

The smooth subalgebra $I_s^\infty$ of $I_s$ is defined as
\begin{equation*}
I_s^\infty := \left\{a\in I_s: \{\underset{x}{\textrm{sup}}|F_n(m,x)|\}_{m,n}\textrm{ is RD}\right\}\,.
\end{equation*}
Here RD means the coefficients are rapid decay.  In particular, the Fourier coefficients of $a\in I_s^\infty$
are in the space
\begin{equation*}
C_0^\infty(\Z_{\ge0}\times\Zsstar):=\left\{F\in C_0(\Z_{\ge0}\times\Zsstar):  \{\underset{x}{\textrm{sup }}|F(m,x)|\}_m\textrm{ is RD}\right\}\,.
\end{equation*}
Appealing again to general Fourier series theory we have that if $a\in I_s^{\infty}$ then $a$ has a norm convergent series expansion:
\begin{equation*}
a=\sum_{n\ge0}V^nM_{F_n} + \sum_{n<0}M_{F_n}(V^*)^{-n}    
\end{equation*}
where $F_n\in C_0^\infty(\Z_{\ge0}\times \Zsstar)$.

In what follows, we rephrase the definition of $I_s^{\infty}$ in terms of a family of norms and endow $I_s^{\infty}$ with a locally convex topology.  Given  $l=s^ml'$ as above, recall the following diagonal operator in $H=\ell^2(\Z)$ :
\begin{equation*}
\mathbb{P}E_{s^ml'} = mE_{s^ml'},\ \ \ \mathbb{P}E_{0} = 0.
\end{equation*}  


Given an $a\in HS(s)$, define the following commutator operator (a derivation) $\delta_\mathbb{P}$ on $HS(s)$ by:
\begin{equation*}
\delta_\mathbb{P}(a)=[\mathbb{P},a]\,.
\end{equation*}
Clearly this operator is only densely defined on $HS(s)$.

For non-negative integer $N$, define $\|\cdot\|_N : B(H) \to \R \cup \{\infty\}$ by $\|a\|_N = \|a(1+\PP)^N\|$ where $\|\cdot\|$ is the norm on $B(H)$.  As $\|\cdot\|$ is a norm on $B(H)$, so too is $\|\cdot\|_N$.  Note that $\|\cdot\|_0 = \|\cdot\|$.
\begin{prop}\label{nprop}
Let $a$ and $b \in B(H)$.  The following hold: 
\begin{enumerate}
\item $\|a\|_N$ is increasing in $N$.
\item $\|ab\|_N \leq \|a\| \|b\|_N \leq \|a\|_N \|b\|_N$.
\item $\|a^*\|_N \leq \sum_{j=0}^N \binom{N }{j} \|\delta_{\PP}^j(a)\|_N$.
\end{enumerate}
\end{prop}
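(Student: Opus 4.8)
The three statements are of decreasing triviality, so the plan is to dispose of (1) and (2) quickly and concentrate on (3). For (1) I would use positivity of $\PP$: all its eigenvalues are $\ge 0$, so $1+\PP\ge 1$ and $(1+\PP)^{-1}$ is a bounded operator with $\|(1+\PP)^{-1}\|\le 1$; writing $a(1+\PP)^N=\bigl(a(1+\PP)^{N+1}\bigr)(1+\PP)^{-1}$ gives $\|a\|_N\le\|a\|_{N+1}\,\|(1+\PP)^{-1}\|\le\|a\|_{N+1}$, the inequality read in $\R\cup\{\infty\}$ so that nothing is lost when $\|a\|_{N+1}=\infty$. For (2) I would factor $ab(1+\PP)^N=a\bigl(b(1+\PP)^N\bigr)$ and apply submultiplicativity of the operator norm to get $\|ab\|_N\le\|a\|\,\|b\|_N$; the second inequality is then just $\|a\|=\|a\|_0\le\|a\|_N$, a special case of (1).

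The substantive statement is (3). The plan has two ingredients. First, trade the adjoint on $a$ for moving $(1+\PP)^N$ across: since $(1+\PP)^N$ is self-adjoint, $a^*(1+\PP)^N$ and $(1+\PP)^N a$ are mutually adjoint operators, so taking adjoints yields $\|a^*\|_N=\|a^*(1+\PP)^N\|=\|(1+\PP)^N a\|$ (again in $[0,\infty]$). Second, establish the commutator identity
\begin{equation*}
(1+\PP)^N a=\sum_{j=0}^N\binom{N}{j}\,\delta_{\PP}^j(a)\,(1+\PP)^{N-j},
\end{equation*}
which I would prove by induction on $N$ from the base case $(1+\PP)a=a(1+\PP)+\delta_{\PP}(a)$, using that right multiplication by $1+\PP$ commutes with the derivation $\delta_{\PP}$; equivalently, apply the binomial theorem to the two commuting operations ``right-multiply by $1+\PP$'' and ``apply $\delta_{\PP}$'', whose sum is ``left-multiply by $1+\PP$''. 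Taking norms, using the triangle inequality, recognizing $\|\delta_{\PP}^j(a)(1+\PP)^{N-j}\|=\|\delta_{\PP}^j(a)\|_{N-j}$, and finally bounding $\|\delta_{\PP}^j(a)\|_{N-j}\le\|\delta_{\PP}^j(a)\|_N$ via part (1) gives exactly the asserted estimate.

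The one place where care is needed — and the main, if minor, obstacle — is bookkeeping with the unbounded operator $\PP$: a priori the iterated commutators $\delta_{\PP}^j(a)$ are only densely defined, and one must check that the displayed identity is meaningful and the norm manipulations legitimate. I would handle this by observing that if some $\delta_{\PP}^j(a)$ with $j\le N$ fails to extend to a bounded operator then $\|\delta_{\PP}^j(a)\|_N=\infty$, the right-hand side of (3) is infinite, and there is nothing to prove; hence one may assume $\delta_{\PP}(a),\dots,\delta_{\PP}^N(a)$ are all bounded, in which case the identity is verified directly on the dense subspace of finitely supported sequences (where $\PP$ acts diagonally) and extended by continuity, after which the norm inequalities are immediate.
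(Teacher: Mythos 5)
Your proof is correct and follows essentially the same route as the paper: (1) via positivity of $\PP$ (you argue through boundedness of $(1+\PP)^{-1}$, the paper argues directly on a dense subspace of RD vectors, both valid), (2) by submultiplicativity of the operator norm together with (1), and (3) by the adjoint identity $\|a^*\|_N = \|(1+\PP)^N a\|$ plus the binomial commutator expansion $(1+\PP)^N a = \sum_{j=0}^N\binom{N}{j}\delta_{\PP}^j(a)(1+\PP)^{N-j}$, which is exactly the induction the paper delegates to Proposition 2.2 of \cite{KMP3}. Your attention to the domain issues for the unbounded operator $\PP$ is appropriate and does not change the argument.
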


\begin{proof}
Let $\mathscr{D} \subseteq H$ be a dense subspace in the Hilbert space $H$ given by
\begin{equation*}
\mathscr{D} = \left\{x\in H : x=\sum x_{m,l} E_{s^ml} \textrm{ such that } m \in \Z_{\ge0}, l\in \Z, s \nmid l, \{x_{m,l}\}\textrm{ is } \mathrm{\ RD}\right\}\,.
\end{equation*}
Note that $\mathscr{D}$ is in the domain of $(1+\PP)^N$ for every $N$.  Moreover
\begin{equation*}
\|a\|_N = \|a(1+\PP)^N\| = \underset{x\in \mathscr{D}, \|x\|\leq 1}{\textrm{sup }}\|a(1+\PP)^N x\|\,.
\end{equation*}
Since $\PP$ is a positive operator, notice that $\|(1+\PP)^N x\|\ge\|x\|$.  Therefore it follows that we have $\|(1+\PP)^{N+1}x\| \geq \|(1+\PP)^N x\|$.  By continuity of the norm and density of $\mathscr{D}$, we have $\|a\|_{N+1} \geq \|a\|_N$.

Item 2 follows directly from the submultiplicativity of $\|\cdot\|$ and Item 1.  The last Item follows from an induction argument similar to Proposition 2.2 in \cite{KMP3}.
\end{proof}

We define the (possibly infinite valued)  $M,N$-norm on $B(H)$  by
\begin{equation*}
\|a\|_{M,N} = \sum_{j=0}^M \binom{M}{j} \|\delta_{\PP}^j(a)\|_N\,.
\end{equation*}
It is clear from the definition that those expressions satisfy the norm properties for each $M,N\in \Z_{\ge0}$.  We can immediately see that $\mnnorm[0]{\cdot}{N}=\nnorm{\cdot}$. 

Our previous papers \cite{KMP2,KMP3} discussed similarly defined norms. Our next result describes properties of these norms, analogous to those demonstrated in those previous papers.  While the proofs are almost identical to the proofs of those other papers, we comment on some of the details below.

\begin{prop}\label{mnprop}
Let $a$ and $b$ be in $B(H)$. The following hold:
\begin{enumerate}
\item $a\in I_s^\infty$ if and only if $\|a\|_{M,N}<\infty$ for all nonnegative integers $M$ and $N$.
\item $\|a\|_{M+1,N} = \|a\|_{M,N} + \|\delta_\mathbb{P}(a)\|_{M,N}$.
\item $\|a\|_{M,N} \le \|a\|_{M,N+1}$.
\item $\|ab\|_{M,N} \le \|a\|_{M,0}\|b\|_{M,N} \le \|a\|_{M,N}\|b\|_{M,N}$.
\item $\|\delta_\mathbb{P}(a)\|_{M,N}\le \|a\|_{M+1,N}$.
\item $\|a^*\|_{M,N}\le \|a\|_{M+N,N}$.
\item $I_s^\infty$ is a complete topological vector space.
\end{enumerate}
\end{prop}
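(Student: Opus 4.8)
\textbf{Proof proposal for Proposition \ref{mnprop}.}

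The plan is to establish items (1)--(6) as essentially formal consequences of Proposition \ref{nprop} together with the Fourier-coefficient description of $I_s^\infty$, and then to derive completeness (item (7)) as the genuine work. For item (1), I would compute $\delta_{\PP}^j(a)$ on basis vectors: since $\PP$ acts diagonally with eigenvalue $m$ on $E_{s^ml'}$ and $V$ shifts $m\mapsto m+1$, one finds $\delta_{\PP}(V^nM_{F_n}) = nV^nM_{F_n}$ and $\delta_{\PP}(M_{F_n}(V^*)^{-n}) = nM_{F_n}(V^*)^{-n}$, so $\delta_{\PP}^j$ multiplies the $n$-th Fourier component by $n^j$. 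Then $\|\delta_{\PP}^j(a)\|_N$ controls $\sup_{m,x}|n^j (1+m)^N F_n(m,x)|$ up to constants, and finiteness of all $\|a\|_{M,N}$ is equivalent to the rapid-decay condition defining $I_s^\infty$; conversely the norm-convergent expansion guarantees $a\in B(H)$ genuinely lies in $I_s$. Item (2) is the Pascal-triangle identity $\binom{M+1}{j} = \binom{M}{j} + \binom{M}{j-1}$ applied termwise (using $\delta_{\PP}^{j+1} = \delta_{\PP}\circ\delta_{\PP}^j$), item (3) is immediate from Proposition \ref{nprop}(1), item (4) follows from Proposition \ref{nprop}(2) after expanding $\delta_{\PP}^j(ab) = \sum_k \binom{j}{k}\delta_{\PP}^k(a)\delta_{\PP}^{j-k}(b)$ (the Leibniz rule for the derivation $\delta_{\PP}$) and a binomial rearrangement, and item (5) is the trivial inequality that a subsum of the nonnegative terms defining $\|a\|_{M+1,N}$ is at most the whole sum. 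Item (6) combines Proposition \ref{nprop}(3) with item (3): $\|a^*\|_{M,N} = \sum_j \binom{M}{j}\|\delta_{\PP}^j(a^*)\|_N \le \sum_j\binom{M}{j}\sum_{k}\binom{N}{k}\|\delta_{\PP}^{k}\delta_{\PP}^j(a)\|_N$ since $\delta_{\PP}(a^*) = -\delta_{\PP}(a)^*$ up to sign bookkeeping, and collecting terms gives a bound by $\|a\|_{M+N,N}$.

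For item (7), completeness, I would argue exactly as in \cite{KMP2,KMP3}. Suppose $\{a_k\}$ is Cauchy in $I_s^\infty$ for the topology generated by all $\|\cdot\|_{M,N}$. In particular it is Cauchy in $\|\cdot\|_{0,0} = \|\cdot\|$, so it converges in operator norm to some $a\in I_s$ (the ideal $I_s$ is norm-closed in $HS(s)$). I then need to show $a\in I_s^\infty$ and $a_k\to a$ in every $\|\cdot\|_{M,N}$. The key point is that $\delta_{\PP}^j(a_k)(1+\PP)^N$ is Cauchy in $B(H)$ for every $j,N$, hence converges to some bounded operator $T_{j,N}$; one identifies $T_{j,N}$ as $\delta_{\PP}^j(a)(1+\PP)^N$ by testing against vectors in the dense domain $\mathscr{D}$ and using that $\PP$ and $V$ have well-understood actions there, so that $\delta_{\PP}^j$ and $(1+\PP)^N$ are closed operations in the relevant sense. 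This gives $\|a\|_{M,N} < \infty$ for all $M,N$, so $a\in I_s^\infty$ by item (1), and the Cauchy estimates pass to the limit to give $\|a - a_k\|_{M,N}\to 0$.

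The main obstacle is the identification step inside the completeness proof: one must be careful that norm-convergence of $\delta_{\PP}^j(a_k)$ together with norm-convergence of $a_k$ actually forces the limit to be $\delta_{\PP}^j$ of the limit, since $\delta_{\PP}$ is only densely defined and unbounded. The clean way around this is to work entirely at the level of Fourier coefficients: the map sending $a\in I_s$ to its array $\{\sup_x|F_n(m,x)|\}_{m,n}$ is continuous, the action of $\delta_{\PP}^j$ corresponds to multiplication of this array by $n^j$, and the action of $(1+\PP)^N$ corresponds to multiplication by $(1+m)^N$; Cauchyness in all $\|\cdot\|_{M,N}$ then translates into Cauchyness of the weighted sup-norms of these arrays, and the space of rapid-decay arrays is complete by an elementary diagonal argument. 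One then reconstructs $a$ from its limiting Fourier array via the norm-convergent series expansion, and everything matches up. This coefficient-level reformulation is exactly the strategy of \cite{KMP2,KMP3} and makes the unbounded-operator subtleties disappear.
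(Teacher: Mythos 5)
Your proposal is correct and follows essentially the same route as the paper: item (1) via the Fourier-coefficient expansion and the fact that $\delta_\PP$ multiplies the $n$-th component by $n$, items (2)–(6) via Pascal's identity, the Leibniz rule, Vandermonde, and Proposition \ref{nprop}, and item (7) via the standard coefficient-level Cauchy argument (the paper simply defers these last items to "induction arguments" and "standard completeness arguments," which your sketch fills in correctly).
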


\begin{proof}
Suppose $a\in I_s^{\infty}$. Then $a$ has a norm convergent series expansion.  We write $a$ as
\begin{equation*}
a=\sum_{n\ge0}V^nM_{F_n} + \sum_{n<0}M_{F_n}(V^*)^{-n}    
\end{equation*}
where $F_n\in C_0(\Z_{\ge0}\times \Zsstar)$.  
Applying $\dpp$ to $V$ and $M_F$ for some $F\in C(\Z_s)$, and noting that $M_F$ is a diagonal operator, we have
\begin{equation*}
\dpp(V)=V,\quad \dpp(V^*)=-V^*, \quad\text{and}\quad  \dpp(M_F)=0 \,.    
\end{equation*}
It follows that
\begin{equation}
\label{steve}
\dpp^j(a)=\sum_{n\geq 0} n^j V^n M_{F_n} + \sum_{n<0} n^j M_{F_n} (V^*)^{-n} \,. 
\end{equation}  
Notice on a basis element we have that
\begin{equation*}
M_{F_n}(1+\PP)^NE_{s^mj} = F_n(m,j)(1+m)^NE_{s^mj}\,.
\end{equation*}
Since $F_n(m,j)$ is RD in both $m$ and $n$ and bounded in $j$, it follows that $\|a\|_{M,N} < \infty$.

Now assume $\mnnorm{a}{N} < \infty$.  We want to prove that $\{\sup_x |F_n(m,x)|\}_{m,n}$ is RD in $m$ and $n$.  If $n\ge 0$, from the expectation formula, we have 
\begin{equation} \label{stephen}
M_{F_n} = E((V^*)^n a)\,.
\end{equation}
It follows from the definition of the expectation that
\begin{equation*}
E(a(1+\PP)^N) = E(a)(1+\PP)^N.
\end{equation*}
Consequently,
$$M_{F_n}(1+\PP)^N = E((V^*)^na(1+\PP)^N),$$ 
and so, using $\|E(a)\| \leq \|a\|$, we have
\begin{equation*}
\|M_{F_n}(1+\PP)^N\| \leq \|V^*\|^n\nnorm{a} = \nnorm{a} < \infty\,.
\end{equation*}

Since $M_{F_n}(1+\PP)^N$ is a diagonal operator we can compute its norm and, using the above inequality, we get
\begin{equation*}
\sup_m \sup_{x\in \Z_s^\times} (1+m)^N|F_n(m,x)| = \sup_{m,l}(1+m)^N|F_n(m,l)| =\|M_{F_n}(1+\PP)^N\| < \infty\,.
\end{equation*}
A similar argument works for $n<0$.  Hence, we have the rapid decay in $m$.  

To get the rapid decay in both $m$ and $n$, first consider $n\ge 0$. 
It now follows from the expectation formula \eqref{stephen} and equation \eqref{steve} that 
\begin{equation*}
\sup_{n\ge 0} \|n^jM_{F_n}(1+\PP)^N\| = \sup_{n\ge 0} \sup_m \sup_x n^j(1+m)^N|F_n(m,x)| < \infty
\end{equation*}
for all $j, N$. There is a similar argument for $n<0$.  Therefore, $a\in I_s^\infty$.

Items (2)-(6) follow from either an induction argument, Proposition \ref{nprop} or a combination of them.  The last item follows from standard completeness arguments.
\end{proof}

Recall from Proposition 2.1 in \cite{KMP3}, that the space of smooth compact operators $\mathcal{K}^{\infty}$  can be described as
\begin{equation*}
\mathcal{K}^{\infty} = \left\{ \sum_{n\ge0} V^n x_n + \sum_{n< 0} x_n (V^*)^{-n} : x_n\in c_0(\Z_{\geq 0}), \{x_n(m)\}_{m,n} \text{ is RD} \right\}\,.
\end{equation*} Notice that it is nuclear Fr\'echet space.  Thus we have the following isomorphism of algebras:
\begin{equation*}
I_s^{\infty} \cong C(\Zsstar) \otimes \mathcal{K}^{\infty}.
\end{equation*}

\subsection{Holomorphic Stability of $I_s^\infty$}
Next we turn to stability properties of the smooth subalgebra $I_s^\infty$.

\begin{prop}\label{IpluscInv}
If $c\in I_s^\infty$ and $I+c$ is invertible in $HS(s)$, then
\begin{equation*}
(I+c)^{-1}-I = -(I+c)^{-1}c\in I_s^\infty\,.
\end{equation*}
\end{prop}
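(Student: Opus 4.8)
The plan is to show that $c':= -(I+c)^{-1}c$ lies in $I_s^\infty$ by using the fact that $c' = (I+c)^{-1} - I$ satisfies the resolvent-type identity $c' = -c + cc'$ (equivalently $c' = -c - cc'$ after sign bookkeeping; the precise form is $(I+c)c' = -c$, so $c' = -c - cc'$), together with the completeness of $I_s^\infty$ in the family of norms $\|\cdot\|_{M,N}$ established in Proposition \ref{mnprop}. The strategy is a bootstrap: first observe that $c'$ is \emph{a priori} only known to be an element of $HS(s)$, but since $c' = -(I+c)^{-1}c$ and $I_s$ is an ideal in $HS(s)$ with $c\in I_s$, we get $c'\in I_s$ at the level of the C$^*$-algebra. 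Then one improves regularity degree by degree in the smoothness parameters $M$ (number of $\delta_\PP$ derivatives) and $N$ (the weight $(1+\PP)^N$).

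Concretely, first I would handle the case $M=0$: I want to bound $\|c'\|_N = \|c'(1+\PP)^N\|$. Write $c'(1+\PP)^N = -c(1+\PP)^N - cc'(1+\PP)^N$. The first term is $\|c\|_N < \infty$ since $c\in I_s^\infty$. For the second term, I would like to move $(1+\PP)^N$ past $c'$, but $c'$ need not commute with $\PP$; instead use the relation again recursively, or better, use part (4) of Proposition \ref{mnprop}: $\|cc'\|_{0,N} \le \|c\|_{0,0}\|c'\|_{0,N} = \|c\|\,\|c'\|_N$. So $\|c'\|_N \le \|c\|_N + \|c\|\,\|c'\|_N$ — this is circular unless $\|c\| < 1$. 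The fix is the standard one: since $(I+c)^{-1}$ exists in $HS(s)$ and is bounded, set $K := \|(I+c)^{-1}\|$ and instead write $c' = -(I+c)^{-1}c$ directly, so $c'(1+\PP)^N = -(I+c)^{-1}\,c\,(1+\PP)^N$ and hence $\|c'\|_N \le K\|c\|_N < \infty$. This disposes of all the $N$-norms at once with no smallness assumption.

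Next, for $M \ge 1$, I would induct on $M$. Apply $\delta_\PP$ to the identity $(I+c)c' = -c$, using that $\delta_\PP$ is a derivation (densely defined): $\delta_\PP(c)c' + (I+c)\delta_\PP(c') = -\delta_\PP(c)$, whence $\delta_\PP(c') = -(I+c)^{-1}\big(\delta_\PP(c) + \delta_\PP(c)\,c'\big) = -(I+c)^{-1}\delta_\PP(c)(I + c')= -(I+c)^{-1}\delta_\PP(c)(I+c)^{-1}$. Iterating, $\delta_\PP^j(c')$ is a finite sum (by a Leibniz expansion) of terms of the form $(I+c)^{-1}\delta_\PP^{j_1}(c)(I+c)^{-1}\delta_\PP^{j_2}(c)\cdots(I+c)^{-1}$ with $j_1 + \cdots \le j$, and we already know each $\delta_\PP^{j_i}(c)\in I_s^\infty$ by Proposition \ref{mnprop}(1). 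Combining with the product estimate (4) and weight monotonicity (3) of Proposition \ref{mnprop}, every $\|\delta_\PP^j(c')\|_N$ is finite, hence $\|c'\|_{M,N} < \infty$ for all $M,N$, so $c'\in I_s^\infty$ by Proposition \ref{mnprop}(1).

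The main obstacle is the justification that the derivation identity $\delta_\PP((I+c)c') = \delta_\PP(-c)$ is legitimate even though $\delta_\PP$ is only densely defined and $c'$ is only known a posteriori to be smooth. The clean way around this is to work on the dense subspace $\mathscr{D}\subseteq H$ from the proof of Proposition \ref{nprop}: all the operators $\delta_\PP^j(c)$, $(1+\PP)^N$ preserve (a suitable dense core inside) $\mathscr{D}$, and $c'$ maps $\mathscr{D}$ into the domain of $(1+\PP)^N$ because of the resolvent formula; so all manipulations are first carried out on vectors in $\mathscr{D}$ and then extended by the density/continuity argument exactly as in Proposition \ref{nprop}. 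Once that bookkeeping is in place, the estimates are routine applications of Proposition \ref{mnprop}, and the displayed identity $(I+c)^{-1} - I = -(I+c)^{-1}c$ is immediate algebra.
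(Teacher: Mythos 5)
Your proof is correct and takes essentially the same route as the paper: both establish the algebraic identity, then apply $\delta_\PP$ using the resolvent formula $\delta_\PP((I+c)^{-1}) = -(I+c)^{-1}\delta_\PP(c)(I+c)^{-1}$, iterate to express $\delta_\PP^j(c')$ as a finite sum of products of bounded operators with elements of $I_s^\infty$, and invoke the submultiplicativity of the $N$-norms from Proposition \ref{nprop}/\ref{mnprop} to conclude. Your extra remarks on the false start with the resolvent identity and on the dense-domain bookkeeping are additional care rather than a different argument.
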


\begin{proof}
Since $I+c$ is invertible in $HS(s)$ we obtain
\begin{equation*}
(I+c)^{-1}-I = -(I+c)^{-1}c.
\end{equation*}
Next, consider the following calculation:
\begin{equation*}
\begin{aligned}
\dpp((I+c)^{-1}c) &= \dpp((I+c)^{-1})c + (I+c)^{-1}\dpp(c) \\
&=-(I+c)^{-1}\dpp(c)(I+c)^{-1}c + (I+c)^{-1}\dpp(c)
\end{aligned}
\end{equation*}
Notice that both terms in that equation are of the form $ab$ where $a$ is a bounded operator and $b$ is in $I_s^{\infty}$.  In fact, by induction we have for any $j$
\begin{equation*}
\dpp^j((I+c)^{-1}c) = \sum_i a_i b_i,\text{ a finite sum,}
\end{equation*}
where $a_i$ are bounded operators and $b_i\in I_s^\infty$.
Therefore, from Proposition \ref{nprop}, part $(2)$, we have that for every $N$
\begin{equation*}
\|\dpp^j((I+c)^{-1}c)\|_{0,N} < \infty
\end{equation*}
and so for every $M$ and $N$ we get
\begin{equation*}
\|(I+c)^{-1}-I\|_{M,N}<\infty
\end{equation*}
as desired.
\end{proof}

\begin{theo}
$I_s^\infty$ is closed under the holomorphic functional calculus: for any $a\in I_s^\infty$ and $f$ holomorphic on an open set containing the spectrum of $a$ with $f(0)=0$, we have $f(a)\in I_s^\infty$.
\end{theo}
\begin{proof}
Let $a\in I_s^\infty$, $f$ a holomorphic function on open set that contains the spectrum of $c$. If $\gamma$ is a curve around the spectrum of $a$ contained in the domain of $f$, define $f(a)$ via the Cauchy Integral Formula:
\begin{equation*}
f(a)=\frac{1}{2\pi i} \int_\gamma f(z)(z-a)^{-1}\,dz\,.
\end{equation*}
Completeness of $I_s^\infty$ and the Lebesgue Dominated Convergence Theorem yield convergence of the integral; $f(0)=0$ and Proposition \ref{IpluscInv} implies that $f(a)$ is in $I_s^\infty$, see \cite{Bo} for a more general discussion.
\end{proof}

\subsection{Smooth Stability of $I_s^\infty$.} We now consider stability of $I_s^\infty$ under smooth functional calculus of self-adjoint elements. We again are following \cite{KMP3}.

The next proposition will require an additional derivation, related to $\dpp$.  Define the continuous derivation $\partial_j$ on $I_s^\infty$ by
\begin{equation}\label{partialj}
\partial_j(a)=[(I+\PP)^j,a].
\end{equation}

It is clear that $\partial_1 = \dpp$.  A straightforward induction argument establishes the following relations between  $\partial_j$ and powers of $\dpp$:
\begin{equation}\label{partialjdpp}
\partial_j(a)=\sum_{k=1}^j \binom{j}{k}\dpp^k(a)(I+\PP)^{j-k}
\end{equation}
and 
\begin{equation}\label{dpppartialj}
\dpp^j(a)=\sum_{k=1}^j (-1)^{j-k} \binom{j}{k} \partial_k(a)(I+\PP)^{j-k}
\end{equation}
using the convention that $(I+\PP)^0=I$.  These formulas lead to the following observation.

\begin{prop}\label{partialdppequiv}
The norms $\|a\|_{M,N}$, $M, N=0,1,2\ldots$ are equivalent to the seminorms $\|\partial_j(a)\|_{0,N}$, $j,N=0,1,2,\ldots$.
\end{prop}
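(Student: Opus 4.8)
The plan is to prove the two-way equivalence of the two families of seminorms by exhibiting, for each fixed degree, a finite linear bound of each family in terms of the other, using the explicit conversion formulas \eqref{partialjdpp} and \eqref{dpppartialj} together with the multiplicativity and monotonicity statements of Proposition \ref{mnprop} (and Proposition \ref{nprop}). Since both families are indexed by pairs of nonnegative integers, equivalence of the locally convex topologies means: for every $M,N$ there are finitely many indices $(j_1,N_1),\dots,(j_r,N_r)$ and a constant $C$ with $\|a\|_{M,N}\le C\sum_{i}\|\partial_{j_i}(a)\|_{0,N_i}$, and conversely for every $j,N$ there are finitely many $(M_i,N_i)$ and a constant with $\|\partial_j(a)\|_{0,N}\le C\sum_i\|a\|_{M_i,N_i}$. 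I would prove each direction separately.

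For the easy direction, bounding $\|\partial_j(a)\|_{0,N}$ by the $\|a\|_{M,N}$ norms: apply the operator norm $\|\cdot\|_N=\|\cdot\|_{0,N}$ to \eqref{partialjdpp}. Each summand is $\dpp^k(a)(I+\PP)^{j-k}$; since $(I+\PP)^{j-k}$ need not be bounded I would instead write $\dpp^k(a)(I+\PP)^{j-k}(I+\PP)^N = \dpp^k(a)(I+\PP)^{N+j-k}$, so that $\|\dpp^k(a)(I+\PP)^{j-k}\|_N = \|\dpp^k(a)\|_{N+j-k}\le \|\dpp^k(a)\|_{N+j}$ by monotonicity in $N$ (Proposition \ref{nprop}(1)), and $\|\dpp^k(a)\|_{N+j}\le\|a\|_{k,N+j}\le\|a\|_{j,N+j}$ by Proposition \ref{mnprop}(5) and (2) applied repeatedly (or directly by the definition of $\|\cdot\|_{M,N}$, since $\|\dpp^k(a)\|_{N+j}$ is one of the terms in $\|a\|_{k,N+j}$). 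Summing the $j$ terms with the binomial coefficients gives $\|\partial_j(a)\|_{0,N}\le 2^j\|a\|_{j,N+j}$, a bound by a single norm from the other family.

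For the reverse direction I would first control a single $\|\dpp^j(a)\|_N$ using \eqref{dpppartialj}: exactly as above, $\|\dpp^j(a)\|_N\le\sum_{k=1}^j\binom{j}{k}\|\partial_k(a)(I+\PP)^{j-k}\|_N = \sum_k\binom{j}{k}\|\partial_k(a)\|_{0,N+j-k}\le 2^j\max_{1\le k\le j}\|\partial_k(a)\|_{0,N+j}$. Then, since $\|a\|_{M,N}=\sum_{j=0}^M\binom{M}{j}\|\dpp^j(a)\|_N$ by definition, summing these bounds over $j=0,\dots,M$ yields $\|a\|_{M,N}\le 4^M\sum_{k=0}^M\|\partial_k(a)\|_{0,N+M}$, a bound by finitely many seminorms from the $\partial$-family. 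Together the two directions establish that the two families generate the same locally convex topology. I do not expect a serious obstacle here; the only point requiring a little care is the handling of the unbounded factors $(I+\PP)^{j-k}$ — one must always absorb them into the $(I+\PP)^N$ weight rather than treat them as bounded operators — and making sure the conversion formulas \eqref{partialjdpp}, \eqref{dpppartialj} are applied on the dense domain $\mathscr D$ where everything is defined, then extended by the density/continuity argument already used in the proof of Proposition \ref{nprop}.
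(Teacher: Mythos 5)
Your proposal is correct and is essentially a careful working-out of the paper's one-line proof, which simply asserts that the equivalence follows from the definition of the $M,N$-norms together with the conversion identities \eqref{partialjdpp} and \eqref{dpppartialj}; your explicit bounds $\|\partial_j(a)\|_{0,N}\le 2^j\|a\|_{j,N+j}$ and $\|a\|_{M,N}\le 4^M\sum_{k\le M}\|\partial_k(a)\|_{0,N+M}$ make the argument concrete, and your caution about absorbing the unbounded factors $(I+\PP)^{j-k}$ into the weight is exactly the right care to take. One small point worth stating explicitly: in the reverse bound you must control the $j=0$ term $\|a\|_N$ of $\|a\|_{M,N}$, so the family must be read with $\|\partial_0(a)\|_{0,N}$ standing for $\|a\|_{0,N}$ itself (since literally $\partial_0(a)=[I,a]=0$ would render that seminorm useless), which is clearly the intended convention.
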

\begin{proof}
This follows directly from the definition of the $\|\cdot\|_{M,N}$ norm and equations \eqref{partialjdpp} and \eqref{dpppartialj}.
\end{proof}

\begin{prop}\label{expcminusI}
If $a\in I_s^\infty$ then $e^{ia}-I \in I_s^\infty$ and we have the following estimates:
 \begin{equation*}
\|e^{ia}-I\|_{0,N} \leq \|a\|_{0,N}.
\end{equation*}
and for $j\geq1$,
\begin{equation*}
\|\partial_j(e^{ia}-I)\|_{0,N}\leq \|\partial_j(a)\|_{0,N}+  \|\partial_j(a)\| \|a\|_{0,N}\,.
\end{equation*}
\end{prop}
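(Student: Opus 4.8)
The plan is to write $e^{ia} - I = \sum_{k\geq 1} \frac{i^k}{k!} a^k$, a series which converges in the $\|\cdot\|_{M,N}$ topology by completeness of $I_s^\infty$ (Proposition \ref{mnprop}, item 7) together with the submultiplicativity estimate $\|a^k\|_{M,N} \leq \|a\|_{M,N}^k$ from Proposition \ref{mnprop}, item 4. In particular $e^{ia} - I \in I_s^\infty$, which settles the membership claim. The two estimates are then obtained by working with the derivations $\partial_j$ rather than powers of $\dpp$, since $\partial_j$ is an honest derivation with $\partial_j(I) = 0$, and appealing to Proposition \ref{partialdppequiv} to know these control the full family of norms.

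For the first estimate, I would use the integral representation $e^{ia} - I = \int_0^1 \frac{d}{dt}\,e^{ita}\,dt = i\int_0^1 e^{ita}\,a\,dt = i\int_0^1 a\,e^{ita}\,dt$ (the last equality because $a$ commutes with $e^{ita}$). Taking $\|\cdot\|_{0,N}$ inside the integral and using item 4 of Proposition \ref{mnprop} in the form $\|a\,e^{ita}\|_{0,N} \leq \|a\|_{0,N}\,\|e^{ita}\| = \|a\|_{0,N}$, since $\|e^{ita}\| = 1$ as $a$ is self-adjoint, yields $\|e^{ia} - I\|_{0,N} \leq \|a\|_{0,N}$. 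For the second estimate, apply $\partial_j$ to the same integral. Since $\partial_j$ is a derivation, $\partial_j(e^{ita}) = i\int_0^t e^{i\tau a}\,\partial_j(a)\,e^{i(t-\tau)a}\,d\tau$ (the standard Duhamel-type formula, provable by differentiating in $t$), so
\begin{equation*}
\partial_j(e^{ia} - I) = i\int_0^1 \Big( \partial_j(a)\,e^{ita} + a\,\partial_j(e^{ita}) \Big)\,dt\,.
\end{equation*}
Now $\|\partial_j(a)\,e^{ita}\|_{0,N} \leq \|\partial_j(a)\|_{0,N}$ exactly as before, while for the second term $\|a\,\partial_j(e^{ita})\|_{0,N} \leq \|a\|_{0,N}\,\|\partial_j(e^{ita})\|$, and the operator norm $\|\partial_j(e^{ita})\|$ is bounded by $\int_0^t \|\partial_j(a)\|\,d\tau \leq \|\partial_j(a)\|$ from the Duhamel formula together with $\|e^{i\tau a}\| = 1$. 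Combining and integrating over $t \in [0,1]$ gives $\|\partial_j(e^{ia} - I)\|_{0,N} \leq \|\partial_j(a)\|_{0,N} + \|\partial_j(a)\|\,\|a\|_{0,N}$, as claimed.

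The main obstacle, such as it is, is justifying the Duhamel formula for $\partial_j$ at the level of rigor appropriate here: one must check that $t \mapsto e^{ita}$ is differentiable in the relevant topology with derivative $i a e^{ita}$, that $\partial_j$ commutes with the integral, and that the derivation identity extends from polynomials to $e^{ita}$ by the same series/completeness argument used for membership. All of this is routine given Proposition \ref{mnprop} and the fact that $\partial_j$ is continuous on $I_s^\infty$, but it is the only place where one cannot simply cite an earlier line. One should also note at the outset that $a$ self-adjoint is used twice — once to get $\|e^{ita}\| = 1$ and once implicitly in ensuring $e^{ia}$ is unitary — though the estimates themselves only need the norm bound.
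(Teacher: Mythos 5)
Your membership argument via the power series is a valid alternative to the paper's route, but both norm estimates as written rest on an inequality that is not available. You invoke $\|ab\|_{0,N}\le\|a\|_{0,N}\,\|b\|$ (left factor carries the weight, right factor in plain operator norm), whereas Proposition \ref{nprop} and item~4 of Proposition \ref{mnprop} give only the \emph{opposite} factorization $\|ab\|_{0,N}\le\|a\|\,\|b\|_{0,N}$. The asymmetry is genuine: $\|a\|_N=\|a(1+\PP)^N\|$ places $(1+\PP)^N$ to the right of $a$, so in a product the right-hand factor must absorb the weight; the reverse inequality would require conjugation by $(1+\PP)^N$ to be norm-nonincreasing, which it is not (for instance $\|M_{\chi_0}V^*\|_N=2^N$ while $\|M_{\chi_0}\|_N\,\|V^*\|=1$). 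Each of your three bounds $\|a\,e^{ita}\|_{0,N}\le\|a\|_{0,N}\|e^{ita}\|$, $\|\partial_j(a)\,e^{ita}\|_{0,N}\le\|\partial_j(a)\|_{0,N}$, and $\|a\,\partial_j(e^{ita})\|_{0,N}\le\|a\|_{0,N}\|\partial_j(e^{ita})\|$ uses the unavailable direction, so as stated the estimates do not close.

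The repair for the first estimate is to keep $a$ on the right, $e^{ia}-I=i\int_0^1 e^{ita}a\,dt$, as the paper does. For the second, applying the Leibniz rule to $e^{ita}a$ instead of $a\,e^{ita}$ gives $\partial_j(e^{ita})a + e^{ita}\partial_j(a)$, and now in both terms the rightmost factor is the one carrying the $N$-weight, so your argument goes through once combined with $\|\partial_j(e^{ita})\|\le t\,\|\partial_j(a)\|$ from the Duhamel formula. The paper takes a slightly different path: it uses the symmetric Duhamel identity $\partial_j(e^{ia})=i\int_0^1 e^{i(1-t)a}\partial_j(a)\,e^{ita}\,dt$ and then splits $e^{ita}=I+(e^{ita}-I)$, producing one term with $\partial_j(a)$ rightmost and another with $e^{ita}-I$ rightmost (controlled by the first estimate). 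Either decomposition works, but only once the ordering constraint on the weighted norm is respected; you should also state the self-adjointness hypothesis on $a$ up front, since $\|e^{ita}\|=1$ is used throughout.
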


\begin{proof} 
First we estimate the $0,N$-norm of $e^{ia}-I$. To do this we write:
\begin{equation*}
e^{ia}-I= \int_0^1\frac{d}{dt}\left(e^{ita}\right)\,dt=i \int_0^1e^{ita}a\,dt\,.
\end{equation*}
It follows from Proposition \ref{nprop} that we have:
\begin{equation}\label{eic_estimate}
\|e^{ia}-I\|_{0,N}\leq \int_0^1\|e^{ita}\|\|a\|_{0,N}\,dt =\|a\|_{0,N}\,.
\end{equation}

For $j\geq 1$ we write:
\begin{equation}\label{duhamel}
\begin{aligned}
\partial_j(e^{ia})&=(I+\PP)^je^{ia}-e^{ia}(I+\PP)^j=\int_0^1\frac{d}{dt}\left(e^{i(1-t)a}(I+\PP)^je^{ita}\right)\,dt=\\
&=
i\int_0^1e^{i(1-t)a}\partial_j(a)e^{ita}\,dt=i\int_0^1e^{i(1-t)a}\partial_j(a)\,dt+i\int_0^1e^{i(1-t)a}\partial_j(a)(e^{ita}-I)\,dt\,.
\end{aligned}
\end{equation}
Consequently, we estimate, using equation \eqref{eic_estimate}:
\begin{equation*}
\begin{aligned}
\|\partial_j(e^{ia}-I)\|_{0,N}&\leq \int_0^1\|e^{i(1-t)a}\partial_j(a)\|_{0,N}\,dt+\int_0^1\|e^{i(1-t)a}\partial_j(a)(e^{ita}-I)\|_{0,N}\,dt\leq\\
&\leq \|\partial_j(a)\|_{0,N}+  \|\partial_j(a)\| \|a\|_{0,N}\,,
\end{aligned}
\end{equation*}
by Proposition \ref{nprop}.


\end{proof}

\begin{theo}
$I_s^\infty$ is closed under the smooth functional calculus of self-adjoint elements: for any self-adjoint $c\in I_s^\infty$ and smooth $f$ on a neighborhood of the spectrum of $c$ with $f(0)=0$, we have $f(c)\in I_s^\infty$.
\end{theo}

\begin{proof} The proof below is inspired by ideas in \cite{BC}. Pick a number $L$ bigger than $2\|c\|$ and extend $f$ to the whole of $\R$ in such a way that it is not only still smooth, but also it is $L$-periodic: 
$$f(\theta+L)=f(\theta)$$ 
for every $\theta$. Then $f(\theta)$ admits a Fourier series representation:
\begin{equation*}
f(\theta)=\sum_{n\in\Z}f_ne^{2\pi in\theta/L}
\end{equation*}
with rapid decay coefficients $\{f_n\}$. Taking into account the assumption $f(0)=0$, we have:
\begin{equation*}
f(c)=\sum_{n\in\Z}f_n\left(e^{2\pi inc/L}-I\right).
\end{equation*}
Using the previous proposition we see that the seminorms
\begin{equation}
\|\partial_j(e^{2\pi inc}-I)\|_{0,N}
\end{equation}
grow at most quadratically in $n$, so that $\|\partial_j(f(c))\|_{M,N}<\infty$, by RD of $\{f_n\}$, implying that $f(c)$ is in $I_s^\infty$.
\end{proof}


\subsection{Smooth Hensel-Steinitz Algebra}
For $\phi\in C^\infty(\R/\Z)$, a Toeplitz operator, $T(\phi)$, acting on $H$ is given by
\begin{equation*}
T(\phi) = \sum_{m\geq 0}\phi_mV^m + \sum_{m<0}\phi_m(V^*)^{-m}
\end{equation*}
where $\{\phi_m\}$ are the $m^{\textrm{th}}$ Fourier coefficients of $\phi$.  Since $\phi\in C^\infty(\R/\Z)$, it follows that the sequence $\{\phi_m\}$ is RD and the above series is norm convergent.

We define the smooth Hensel-Steinitz algebra, $HS^\infty(s)$ by:
\begin{equation*}
HS^\infty(s) = \{a\in HS(s): a=T(\phi)+c \text{ with } \phi\in C^\infty(\R/\Z), c\in I_s^\infty\}\,.
\end{equation*}

We use the following function norms on $C^\infty(\R/\Z)$:
\begin{equation*}
\Cnorm{k}{\phi}=\sum_{j=0}^k \binom{k}{j}\left\|\left(\dfrac{1}{2\pi i} \frac{d}{d\theta}\right)^j \phi\right\|_\infty
\end{equation*}
It is straightforward to verify that they are submultiplicative and that 
\begin{equation}\label{cnorm}
\Cnorm{k+1}{\phi}=\Cnorm{k}{\phi}+\Cnorm{k}{\dfrac{1}{2\pi i}\frac{d}{d\theta} \phi}.
\end{equation} 

The basic properties of $HS^\infty(s)$ are established in the following statement.

\begin{prop}\label{IVIdeal}
If $\phi\in C^\infty(\R/\Z)$ and $c\in I_s^\infty$, then both $T(\phi)c$ and $cT(\phi)$ are in $I_s^\infty$.  If $\phi$ and $\psi$ are in $C^\infty(\R/\Z)$, then
\begin{equation*}
T(\phi)T(\psi)- T(\phi\psi)\in I_s^\infty\,.
\end{equation*}
Consequently, $HS^\infty(s)$ is a dense $*$-subalgebra of $HS(s)$ with $a^* = T({\overline{f}}) + c^*$ for a given $a\in HS^\infty(s)$.  Moreover, $I_s^\infty$ is a two-sided ideal of $HS^\infty(s)$ with
\begin{equation*}HS^\infty(s)/I_s^\infty\cong C^\infty(\R/\Z)\,,
\end{equation*}
and the quotient map $q:HS^\infty(s)\to C^\infty(\R/\Z)$ acts as 
$${q}(T(\phi)+c) = \phi$$ 
with $\phi\in C^\infty(\R/\Z)$.
\end{prop}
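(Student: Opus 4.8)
The plan is to establish the three claims about products in sequence, then assemble the structural consequences. First I would prove that $T(\phi)c$ and $cT(\phi)$ lie in $I_s^\infty$ for $\phi\in C^\infty(\R/\Z)$ and $c\in I_s^\infty$. Writing $T(\phi)=\sum_{m\geq 0}\phi_m V^m+\sum_{m<0}\phi_m(V^*)^{-m}$ with $\{\phi_m\}$ rapid decay, the product $T(\phi)c$ is a norm-convergent sum of terms $\phi_m V^m c$ (and $\phi_m (V^*)^{-m}c$), each of which is a bounded operator times an element of $I_s^\infty$; since $V$ and $V^*$ are bounded, Proposition~\ref{nprop}(2) gives $\|V^m c\|_{0,N}\leq \|c\|_{0,N}$, and more generally $\|\delta_\PP^j(V^m c)\|_{0,N}$ is controlled using $\delta_\PP(V)=V$, $\delta_\PP(V^*)=-V^*$ together with the Leibniz rule, producing a factor polynomial in $m$ which is absorbed by the rapid decay of $\{\phi_m\}$. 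Hence $\|T(\phi)c\|_{M,N}<\infty$ for all $M,N$, so $T(\phi)c\in I_s^\infty$ by Proposition~\ref{mnprop}(1); the argument for $cT(\phi)$ is symmetric.

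Next I would show $T(\phi)T(\psi)-T(\phi\psi)\in I_s^\infty$. The point is that $T$ is multiplicative modulo the ideal: expanding both $T(\phi)T(\psi)$ and $T(\phi\psi)$ in terms of the generators and using $V^*V=I$ and $VV^*=\alpha(1)$ (equivalently $VV^*=M_{\mathbf 1_{s\Z_s}}$), the discrepancy between $V^aV^{*b}$-type products and the ``clean'' shift $V^{a-b}$ is always a difference $I-VV^* = M_{1-\alpha(1)}$ acting somewhere, and $1-\alpha(1)$ vanishes on the multiples of $s$ — iterating, the error terms are built from multiplication operators supported near $0$ in $\Z_s$, i.e.\ exactly the kind of elements generating $I_s$. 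To see the error lands in the \emph{smooth} ideal, I would note that the Fourier coefficients of $T(\phi)T(\psi)-T(\phi\psi)$ are finite combinations of products $\phi_a\psi_b$ with $a+b$ fixed, and the $\sup_x$ of the corresponding $F_n(m,\cdot)$ decays rapidly in $m$ because the support condition forces $m$ to be bounded by $\min(|a|,|b|)$-type quantities, while rapid decay of $\{\phi_a\}$ and $\{\psi_b\}$ handles the $n$-direction. Packaging this via the norms $\|\cdot\|_{M,N}$ as in the first part completes this step.

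With these two facts in hand the rest is formal. Density of $HS^\infty(s)$ follows because $T(\phi)$ with $\phi$ a trigonometric polynomial together with the dense subalgebra generating $I_s^\infty$ already generate a dense subset of $HS(s)$ (indeed every finite sum $\sum V^n M_{f_n}+\sum M_{f_n}(V^*)^{-n}$ decomposes as $T(\phi)+c$ after splitting $f_n = f_n(0)\cdot 1 + (f_n-f_n(0))$). Closure under multiplication is immediate from the displayed identities: $(T(\phi)+c)(T(\psi)+d) = T(\phi\psi) + \big(T(\phi)T(\psi)-T(\phi\psi)\big) + T(\phi)d + cT(\psi) + cd$, where every summand after $T(\phi\psi)$ lies in $I_s^\infty$ by the first two parts and the fact that $I_s^\infty$ is an algebra (Proposition~\ref{mnprop}(4)). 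The $*$-structure formula $a^* = T(\overline\phi)+c^*$ uses that $T(\phi)^* = T(\overline\phi)$ on the nose (from $(V^m)^* = (V^*)^m$ and conjugating Fourier coefficients) and that $I_s^\infty$ is $*$-closed by Proposition~\ref{mnprop}(6). That $I_s^\infty$ is a two-sided ideal of $HS^\infty(s)$ is again the first part. Finally, the quotient: $HS^\infty(s)/I_s^\infty \to C^\infty(\R/\Z)$, $T(\phi)+c\mapsto \phi$, is well defined because $T(\phi)\in I_s^\infty$ forces all Fourier coefficients $\phi_m$ of $\phi$ to vanish (the degree-$0$ part of $T(\phi)$ is $\phi_0 I$, which is compact only if $\phi_0=0$, and similarly for higher coefficients via the expectation $E$), it is a homomorphism by the multiplicativity-mod-ideal identity, it is clearly surjective, and its kernel is exactly $I_s^\infty$; one also checks it is compatible with the $C^*$-level quotient $q$ of Theorem~\ref{ideal isomorphism}, which identifies the target as $C^\infty(\R/\Z)\subseteq C(\R/\Z)$.

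I expect the main obstacle to be the second step — controlling the error $T(\phi)T(\psi)-T(\phi\psi)$ not merely in norm but in every $\|\cdot\|_{M,N}$, i.e.\ verifying that the commutator-type remainder terms genuinely have rapid-decay Fourier coefficients in both indices $m$ and $n$ simultaneously. This is the place where the interaction between the $V^*V=I$ / $VV^*=\alpha(1)$ asymmetry and the $\PP$-grading has to be tracked carefully; everything else is bookkeeping with the already-established norm estimates of Propositions~\ref{nprop} and~\ref{mnprop}.
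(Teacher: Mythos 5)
Your overall framework is the same as the paper's: reduce everything to finiteness of the $\|\cdot\|_{M,N}$ norms and exploit rapid decay of Fourier coefficients. The structural conclusions at the end (density, $*$-closure, the quotient map, why $T(\phi)\in I_s^\infty$ forces $\phi=0$) are correct and essentially as in the paper. However, there are two genuine gaps in the analytic core.

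First, the claim that the argument for $cT(\phi)$ is ``symmetric'' to the one for $T(\phi)c$ is false. The $N$-norm $\|a\|_N = \|a(1+\PP)^N\|$ is asymmetric: Proposition~\ref{nprop}(2) gives $\|ab\|_N \le \|a\|\|b\|_N$, which handles $\|T(\phi)c\|_N \le \|T(\phi)\|\|c\|_N$ cleanly, but the mirror inequality $\|cT(\phi)\|_N \le \|c\|\|T(\phi)\|_N$ is useless because $\|T(\phi)\|_N = \|T(\phi)(1+\PP)^N\| = \infty$ whenever $T(\phi)$ has a negative-frequency part (already $\|(V^*)(1+\PP)^N\|=\infty$). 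In the term-by-term version you would need the explicit commutation estimate $\|c(V^*)^m\|_N \le (1+m)^N\|c\|_N$ --- a polynomial loss in $m$ that RD of $\{\phi_m\}$ absorbs --- and then track how this loss compounds with the $\delta_\PP$ derivatives. The paper instead handles this by writing $(1+\PP)^N = \partial_N(\cdot) + (\cdot)(1+\PP)^N$ and moving the polynomial onto $c$, which is where the extra $N$ in the estimate $\|cT(\phi)\|_{M,N}\le\|\phi\|_{C^{M+N}}\|c\|_{M,N}$ comes from. Either route works, but neither is ``symmetric'' to the $T(\phi)c$ case; as written this step is unjustified.

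Second, the argument that $T(\phi)T(\psi)-T(\phi\psi)\in I_s^\infty$ is only a heuristic sketch. You correctly identify that the discrepancy is caused by $I-VV^*$ and that a support condition forces decay in $m$, but the proposal does not produce the explicit formula needed to verify the $M,N$-norm bounds. The paper's mechanism is concrete: split $\phi = \phi_+ + \phi_-$, $\psi = \psi_+ + \psi_-$, observe $T(\phi)T(\psi)-T(\phi\psi) = T(\phi_+)T(\psi_-)-T(\psi_-)T(\phi_+)$ (since $T$ is exactly multiplicative on like-signed and on $(-,+)$-ordered products), and then write this commutator as $-\sum_{n<0}\psi_n(V^*)^{-n}T(\phi_+)P_{<-n}$ where $P_{<-n}$ projects onto the span of $\{E_{s^m l'}\}_{m<-n}$. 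It is the appearance of $P_{<-n}$ that makes the $m$-decay precise ($\|P_{<-n}\|_N=|n|^N$, so $m$ is bounded by $|n|$ on the support) and leads to the closed-form bound $\|T(\phi)T(\psi)-T(\phi\psi)\|_{M,N}\le(\pi^2/3-1)\|\psi\|_{C^{M+N+2}}\|\phi\|_{C^M}$. Without some equivalent of these projections, your RD-in-$m$ assertion is not demonstrated. You flagged this step yourself as the main obstacle --- that instinct is correct, but it is a gap that has to be closed, not just acknowledged.
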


\begin{proof}
From Proposition \ref{nprop} we have that $\nnorm{T(\phi)c} \leq \|T(\phi)\|\nnorm{c}$.  In a similar fashion to equation (3.5) in \cite{KMP2} we have that 
\begin{equation*}
\dpp(T(\phi)c)=T\left(\frac{1}{2\pi i}\frac{d}{d\theta} \phi\right)c + T(\phi)\dpp(c).
\end{equation*}
Using this, it follows that
\begin{equation*}
\dpp^j(T(\phi)c)=\sum_k a_kb_k\quad(\textrm{finite sums})
\end{equation*} with $a_k$ bounded and $b_k\in I_s^\infty$.  So $\mnnorm{T(\phi)c}{N} < \infty$ and hence $T(\phi)c \in I_s^\infty$. A similar argument shows $cT(\phi)\in I_s^\infty$.

We further get estimates on $\mnnorm{T(\phi)c}{N}$ and $\mnnorm{cT(\phi)}{N}$, namely:
\begin{equation}\label{ctfestimate}
\mnnorm{T(\phi)c}{N} \leq \Cnorm{M}{\phi} \mnnorm{c}{N}\quad\textrm{and}\quad
\mnnorm{cT(\phi)}{N} \leq \Cnorm{M+N}{\phi} \mnnorm{c}{N}\,.
\end{equation}
These are simple for $M=0$.  Inductively, we have
\begin{align*}
\mnnorm[M+1]{T(\phi)c}{N} &= \mnnorm{T(\phi)c}{N} + \mnnorm{\dpp(T(\phi)c)}{N}
\\
&\leq \Cnorm{M}{\phi}\mnnorm{c}{N} + \mnnorm{\dpp(T(\phi))c}{N} + \mnnorm{T(\phi)\dpp(c)}{N} \\
&\leq \left(\Cnorm{M}{\phi}+\Cnorm{M}{\dfrac{1}{2\pi i}\frac{d}{d\theta} \phi}\right)(\mnnorm{c}{N}+\mnnorm{\dpp(c)}{N}) = \Cnorm{M+1}{\phi}\mnnorm[M+1]{c}{N}.
\end{align*}
For the second estimate, we use the derivation from equation \eqref{partialj}:
\begin{equation*}
\partial_N(a)=[(1+\PP)^N,a] = \sum_{j=1}^N\binom{N}{j}\dpp^j(a)(1+\PP)^{N-j}.
\end{equation*}
For the $M=0$ case of our estimate, we get
\begin{align*}
\mnnorm[0]{cT(\phi)}{N}&= \|cT(\phi)(1+\PP)^N\| = \|(1+\PP)^NT(\overline{\phi})c^*\| =\left\|\partial_N(T(\overline{\phi})c^*) +T(\overline{\phi})c^*(1+\PP)^N\right\| \\
&\leq \|\partial_N(T(\overline{\phi})c^*)\| + \|T(\overline{\phi})c^*(1+\PP)^N\| = \|\partial_N(T(\overline{\phi})c^*)\| + \|T(\overline{\phi})(1+\PP)^Nc^*\|  \\
\end{align*}
Using the formula for the commutator $\partial_N(a)$ we get:
\begin{align*}
\mnnorm[0]{cT(\phi)}{N}&\leq \left\|\sum_{j=1}^N \binom{N}{j} \dpp^j(T(\overline{\phi})c^*)(1+\PP)^{N-j}\right\| + \|T(\overline{\phi})(1+\PP)^Nc^*\| \\
&\leq \sum_{j=0}^N \binom{N}{j}\|\dpp^j(T(\overline{\phi}))(1+\PP)^{N-j}c^*\| = \sum_{j=0}^N\binom{N}{j} \left\|T\left(\left(\frac{1}{2\pi i}\frac{d}{d\theta}\right)^j \phi\right)\right\| \|c\|_{N-j} \\
&\leq \Cnorm{N}{\phi}\nnorm{c} = \Cnorm{N}{\phi}\mnnorm[0]{c}{N}
\end{align*}
Then for the inductive step, we see that
\begin{align*}
\mnnorm[M+1]{cT(\phi)}{N} &= \mnnorm{cT(\phi)}{N}+\mnnorm{\dpp(cT(\phi))}{N} \\
&\leq \mnnorm{c}{N}\Cnorm{M+N}{\phi}+\mnnorm{\dpp(c)}{N}\Cnorm{M+N}{\phi}+\mnnorm{c}{N}\Cnorm{M+N}{\left(\dfrac{1}{2\pi i}\frac{d}{d\theta}\phi\right)} \\
&\leq \Cnorm{M+1+N}{\phi}\mnnorm[M+1]{c}{N}
\end{align*}
Since all norms $\mnnorm{cT(\phi)}{N}$ and $\mnnorm{T(\phi)c}{N}$ are finite, the corresponding products are in $I_s^\infty$.

Let $\phi,\psi\in C^\infty(\R/\Z)$ with Fourier series
\begin{equation*}
\phi(\theta)=\sum_{n\in\Z} \phi_n e^{2\pi in\theta}\mbox{\ and\ } \psi(\theta)=\sum_{m\in\Z} \psi_m e^{2\pi im\theta}
\end{equation*} with $\{\phi_n\}, \{\psi_m\}$ RD.  Separate these into positive and negative frequency components:
\begin{equation}
\phi_+=\sum_{n\geq 0} \phi_n e^{2\pi in\theta} \mbox{\ and\ } \phi_-=\sum_{n<0} \phi_n e^{2\pi in\theta}
\end{equation}
and similarly for $\psi$. Clearly $\phi=\phi_++\phi_-$ and so $T(\phi)=T(\phi_+)+T(\phi_-)$; likewise for $\psi$.  A straightforward calculation shows that $T(\phi_+)T(\psi_+)=T(\phi_+\psi_+)$, $T(\phi_-)T(\psi_-)=T(\phi_-\psi_-)$, and $T(\psi_-)T(\phi_+)=T(\psi_-\phi_+)$.  It follows that
\begin{equation*}
T(\phi)T(\psi)-T(\phi\psi) = T(\phi_+)T(\psi_-)-T(\phi_+\psi_-)=T(\phi_+)T(\psi_-)-T(\psi_-)T(\phi_+)\,. 
\end{equation*}
For $n<0$, let $P_{<-n}$ be the orthogonal projection onto the subspace $\{E_{s^ml'}\}_{m<-n}$.   For $n<0$ and expanding $T(\psi_-)$ as 
\begin{equation*}
T(\psi_-) = \sum_{n<0}\psi_n(V^*)^{-n},
\end{equation*}
it follows that
\begin{equation*}
T(\phi)T(\psi)-T(\phi\psi) = -\sum_{n< 0} \psi_n (V^*)^{-n} T(\phi_+) P_{<-n}\,.
\end{equation*}

It is straightforward to see that $\dpp((V^*)^{-n})=n(V^*)^{-n}$ and that $\dpp(P_{<-n})=0$. Thus, applying $\dpp$ to $T(\phi)T(\psi)-T(\phi\psi)$ we obtain that:
\begin{equation*}
\dpp\left(T(\phi)T(\psi)-T(\phi\psi)\right)= \sum_{n<0} \psi_n (V^*)^{-n} \left(nT(\phi_+)+T\left(\left(\frac{1}{2\pi i}\frac{d}{d\theta}\right)\phi_+\right)\right) P_{<-n}\,.
\end{equation*}
Repeating this process $l$-times we get
\begin{equation}\label{dptftg-tfg}
\dpp^l(T(\phi)T(\psi)-T(\phi\psi))= -\sum_{n<0} \sum_{j=0}^l \binom{l}{j}\psi_n n^{l-j}(V^*)^{-n} T\left(\left(\frac{1}{2\pi i}\frac{d}{d\theta}\right)^j \phi_+\right) P_{<-n}\,.
\end{equation}

To estimate $M,N$-norm of the above, we first estimate the $N$-norm.   Using the triangle inequality, Proposition \ref{nprop}, and the fact that $\nnorm{P_{<-n}}=|n|^N$, we obtain:
\begin{equation*}
\begin{aligned}
\nnorm{\dpp^l(T(\phi)T(\psi)-T(\phi\psi))} &\le
\sum_{n<0}\sum_{j=0}^l \binom{l}{j}|\psi_n||n|^{l-j+N} \left\|T\left(\left(\frac{1}{2\pi i}\frac{d}{d\theta}\right)^j \phi_+\right)\right\|\\
&\le\sum_{n<0}|\psi_n|(1+|n|)^{l+N}\|\phi\|_{C^l}\,.
\end{aligned}
\end{equation*}
Using equation (3.7) in Section 3.3 from \cite{KMP3}, we get
\begin{equation*}
\nnorm{\dpp^l(T(\phi)T(\psi)-T(\phi\psi))}\le \left(\frac{\pi^2}{3}-1\right)\|\psi\|_{C^{l+N+2}}\|\phi\|_{C^l}\,.
\end{equation*}
Consequently, by the definition of the $M,N$-norm along with the following combinatorial estimate
\begin{align*}
&\sum_{l=0}^M \sum_{j=0}^l  \binom{M}{l} \binom{l}{j} |n|^{l-j} \left\|\left(\dfrac{1}{2\pi i} \dfrac{d}{d\theta}\right)^j \phi\right\|_\infty 
\leq \sum_{l=0}^M \sum_{j=0}^l \binom{M}{l-j}\binom{M}{j} |n|^{l-j} \left\| \left(\dfrac{1}{2 \pi i}\dfrac{d}{d\theta}\right)^j \phi\right\|_\infty\\
&\leq \sum_{j=0}^M \sum_{k=0}^M |n|^k \binom{M}{k}\binom{M}{j} \left\|\left(\dfrac{1}{2\pi i}\dfrac{d}{d\theta}\right)^j\phi\right\|_\infty = (1+|n|)^M \|\phi\|_{C^M}
\end{align*}
we get
\begin{equation*}
\|T(\phi)T(\psi)-T(\phi\psi))\|_{M,N}\le\left(\frac{\pi^2}{3}-1\right)\|\psi\|_{C^{M+N+2}}\|\phi\|_{C^M}<\infty
\end{equation*}
for all $M,N$.
\end{proof}

Following Proposition 3.2 from \cite{KMP3}, we can define a countable family of submultiplicative norms defining the topology on $HS^\infty(s)$ as follows:
\begin{equation*}
\|T(\phi)+c\|_{M,N}:=S\|\phi\|_{C^{M+N+2}}+\|c\|_{M,N}\,,
\end{equation*}
where $S$ is a sufficiently large constant.

\subsection{Generalized Toeplitz Operators}
Recall that for $\lambda\in C(\Zsstar)$, we defined a multiplication operator $M_\lambda:H\to H$ in equation \eqref{Mphi}. The key feature of those operators is that, while not in $HS(s)$, they commute with all elements of $HS(s)$ and if $c\in I_s$ then $M_\lambda c$ and $cM_\lambda$ are also in $I_s$.
This lets us introduce the concept of a ``generalized'' Toeplitz operator.  Consider the space
\begin{equation*}
C^\infty(\R/\Z\times\Zsstar) = \left\{\Lambda(\theta,x)=\sum_{n\in\Z}e^{2\pi in\theta}\lambda_n(x) : \lambda_n \in C(\Zsstar) \text{ and } \{\underset{x}{\textrm{sup }}|\lambda_n(x)|\}_n\textrm{ is RD}\right\}\,.
\end{equation*}
There are natural norms on $C^\infty(\R/\Z\times\Zsstar)$ given by:
\begin{equation*}
\Cnorm{k}{\Lambda}=\sum_{j=0}^k \binom{k}{j}\left\|\left(\dfrac{1}{2\pi i} \frac{d}{d\theta}\right)^j \Lambda\right\|_\infty,
\end{equation*}
where, for simplicity, we abuse the regular $C^k$ norm notation.  These norms are submultiplicative and satisfy recurrence:
\begin{equation*}
\Cnorm{k+1}{\Lambda}=\Cnorm{k}{\Lambda}+\Cnorm{k}{\dfrac{1}{2\pi i}\frac{d}{d\theta} \Lambda}
\end{equation*} 

Given $\Lambda\in C^\infty(\R/\Z\times\Zsstar)$ we can define a ``generalized'' Toeplitz operator by
\begin{equation*}
\mathcal{T}(\Lambda) = \sum_{n\ge0}V^nM_{\lambda_n} + \sum_{n<0}M_{\lambda_n}(V^*)^{-n}\,.
\end{equation*}
If $\Lambda$ does not depend on $x$ then $\mathcal{T}(\Lambda)$ is an actual Toeplitz operator, that is $\mathcal{T}(\Lambda)=T(\Lambda)$.  Conversely, if $\mathcal{T}(\Lambda)$ is an actual Toeplitz operator then $\Lambda$ does not depend on $x$. Because of the properties of $M_\lambda$ for $\lambda\in C(\Zsstar)$ mentioned above, Proposition \ref{IVIdeal} immediately generalizes to the following result, with the same proof.

\begin{prop}\label{fancytoeplitz} If $\Lambda\in C^\infty(\R/\Z\times\Zsstar)$ and $c\in I_s^\infty$, then 
\begin{equation*}
\mnnorm{\mathcal{T}(\Lambda)c}{N} \leq \Cnorm{M}{\Lambda} \mnnorm{c}{N}\quad\textrm{and}\quad
\mnnorm{c\mathcal{T}(\Lambda)}{N} \leq \Cnorm{M+N}{\Lambda} \mnnorm{c}{N}\,.
\end{equation*}
If $\Lambda$ and $\Omega$ are in $C^\infty(\R/\Z\times\Zsstar)$, then
\begin{equation*}
\|\mathcal{T}(\Lambda)\mathcal{T}(\Omega)-\mathcal{T}(\Lambda\Omega))\|_{M,N}\le\left(\frac{\pi^2}{3}-1\right)\|\Omega\|_{C^{M+N+2}}\|\Lambda\|_{C^M}.
\end{equation*}
\end{prop}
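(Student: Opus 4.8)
The plan is to observe that the generalized Toeplitz operator $\mathcal{T}(\Lambda)$ shares every structural feature of an ordinary Toeplitz operator $T(\phi)$ that was actually used in the proof of Proposition \ref{IVIdeal}, so that that proof carries over essentially word for word, with $T(\phi),T(\psi)$ replaced by $\mathcal{T}(\Lambda),\mathcal{T}(\Omega)$ and the $C^k(\R/\Z)$-norms replaced by the $C^k(\R/\Z\times\Zsstar)$-norms. The few facts that need re-checking all follow at once from the relations \eqref{MxMphirel} together with the fact that $M_\lambda$ and $\PP$ are both diagonal in the basis $\{E_l\}$.

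Concretely, I would first record the following. (i) $\|\mathcal{T}(\Lambda)\|\le\|\Lambda\|_{C^0}$: decompose $H=\bigoplus_{l'}\ell^2\big(\{s^ml':m\ge0\}\big)\oplus\C E_0$, the sum over integers $l'$ with $s\nmid l'$ and $l'\ne0$; on each summand $V$ restricts to the unilateral shift and $M_{\lambda_n}$ restricts to the scalar $\lambda_n(l')$, so $\mathcal{T}(\Lambda)$ restricts to the classical Toeplitz operator with continuous symbol $\Lambda(\cdot,l')$, of norm $\|\Lambda(\cdot,l')\|_\infty\le\|\Lambda\|_{C^0}$. (ii) $\dpp(\mathcal{T}(\Lambda))=\mathcal{T}\big(\tfrac{1}{2\pi i}\tfrac{d}{d\theta}\Lambda\big)$: since $\dpp$ is a derivation with $\dpp(V)=V$, $\dpp(V^*)=-V^*$ and $\dpp(M_{\lambda_n})=0$, we get $\dpp(V^nM_{\lambda_n})=nV^nM_{\lambda_n}$ and $\dpp(M_{\lambda_n}(V^*)^{-n})=nM_{\lambda_n}(V^*)^{-n}$, exactly the effect of $\tfrac{1}{2\pi i}\tfrac{d}{d\theta}$ on the $n$-th Fourier mode. (iii) splitting $\Lambda=\Lambda_++\Lambda_-$ and $\Omega=\Omega_++\Omega_-$ into nonnegative and negative frequency parts, the identities $\mathcal{T}(\Lambda_+)\mathcal{T}(\Omega_+)=\mathcal{T}(\Lambda_+\Omega_+)$, $\mathcal{T}(\Lambda_-)\mathcal{T}(\Omega_-)=\mathcal{T}(\Lambda_-\Omega_-)$ and $\mathcal{T}(\Omega_-)\mathcal{T}(\Lambda_+)=\mathcal{T}(\Omega_-\Lambda_+)$ hold by $V^*V=I$ and the mutual commutation of the $M_{\lambda_n}$ and of each $M_{\lambda_n}$ with $V$ and $V^*$; consequently $\mathcal{T}(\Lambda)\mathcal{T}(\Omega)-\mathcal{T}(\Lambda\Omega)=\mathcal{T}(\Lambda_+)\mathcal{T}(\Omega_-)-\mathcal{T}(\Omega_-)\mathcal{T}(\Lambda_+)=-\sum_{n<0}M_{\omega_n}(V^*)^{-n}\,\mathcal{T}(\Lambda_+)\,P_{<-n}$, where $P_{<-n}$ is the projection onto $\overline{\mathrm{span}}\{E_{s^ml'}:m<-n\}$, with $\dpp(P_{<-n})=0$ and $\|P_{<-n}\|_N=|n|^N$, exactly as in Proposition \ref{IVIdeal}. (iv) $\mathcal{T}(\Lambda)^*=\mathcal{T}(\overline\Lambda)$, by the same index bookkeeping that gives $T(\phi)^*=T(\overline\phi)$, which is what lets one deduce the right-multiplication estimate from the left one via the derivation $\partial_N$.

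With these in hand, the three estimates are produced exactly as in Proposition \ref{IVIdeal}: the bound on $\|\mathcal{T}(\Lambda)c\|_{M,N}$ by induction on $M$ using (ii) and the recurrence for $\|\cdot\|_{C^k}$ on $C^\infty(\R/\Z\times\Zsstar)$; the bound on $\|c\mathcal{T}(\Lambda)\|_{M,N}$ using (iv) and $\partial_N$; and the commutator estimate by differentiating the formula in (iii) $l$ times to obtain the analogue of \eqref{dptftg-tfg}, bounding the $N$-norm with $\|P_{<-n}\|_N=|n|^N$ and inequality (3.7) of \cite{KMP3} applied uniformly in $l'$ to the symbols $\Lambda(\cdot,l')$ and $\Omega(\cdot,l')$ (this is where (i) enters), and then passing to the $M,N$-norm via the same combinatorial estimate. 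The constant $\tfrac{\pi^2}{3}-1$ and the indices $C^{M+N+2}$, $C^M$ are unchanged, and finiteness of all $\|\cdot\|_{M,N}$ yields membership in $I_s^\infty$ by Proposition \ref{mnprop}(1).

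I do not expect a genuine obstacle here: the only point meriting care is fact (i), and more broadly the observation that inserting the operators $M_{\lambda_n}$ amounts, block by block over $l'$, to replacing the scalar Fourier coefficients $\phi_n$ of the symbol by $\lambda_n(l')$, which can only improve every estimate used for $T(\phi)$; the passage from ``uniformly over integer $l'$'' to ``$\sup$ over $x\in\Zsstar$'' is harmless by density of $\Z\cap\Zsstar$ in $\Zsstar$ and continuity.
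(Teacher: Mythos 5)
Your proposal is correct and takes essentially the same approach as the paper: the paper's entire proof of Proposition \ref{fancytoeplitz} consists of the remark that, because $M_\lambda$ commutes with $V$, $V^*$ and the other multiplication operators and is annihilated by $\dpp$, the proof of Proposition \ref{IVIdeal} carries over verbatim. You have simply made explicit the facts the paper leaves implicit (the norm bound via the block decomposition over $l'$, the action of $\dpp$ on $\mathcal{T}(\Lambda)$, the adjoint identity, and the factorization of products of generalized Toeplitz operators), and each of these checks out.
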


\subsection{Holomorphic Stability of $HS^\infty(s)$} With extra manipulations of Toeplitz operators, the stability of $HS^\infty(s)$ under holomorphic functional calculus follows from the corresponding stability of $I_s^\infty$.
\begin{prop}
If $a\in HS^\infty(s)$ and $a$ is invertible in $HS(s)$, then $a^{-1}\in HS^\infty(s)$.
\end{prop}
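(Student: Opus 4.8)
The plan is to reduce the statement to the invertibility, inside $HS(s)$, of the single Toeplitz operator $T(\phi)$ attached to the symbol of $a$. Write $a=T(\phi)+c$ with $\phi\in C^\infty(\R/\Z)$ and $c\in I_s^\infty$ (so $\phi=q(a)$), as in the definition of $HS^\infty(s)$. Since $q\colon HS(s)\to C(\R/\Z)$ is a unital homomorphism and $a$ is invertible in $HS(s)$, the symbol $\phi=q(a)$ is invertible in $C(\R/\Z)$, hence nowhere vanishing; as $\phi$ is smooth, $\phi^{-1}\in C^\infty(\R/\Z)$. I would first establish that $T(\phi)$ is invertible in $HS(s)$ with $T(\phi)^{-1}\in HS^\infty(s)$. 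Granting this, set $b:=T(\phi)^{-1}\in HS^\infty(s)$. Then $a=T(\phi)(I+bc)$, and $bc\in I_s^\infty$ because $I_s^\infty$ is a two-sided ideal of $HS^\infty(s)$ (Proposition \ref{IVIdeal}). Since $a$ and $T(\phi)$ are invertible in $HS(s)$, so is $I+bc=T(\phi)^{-1}a$; Proposition \ref{IpluscInv} then gives $(I+bc)^{-1}-I\in I_s^\infty$, so $(I+bc)^{-1}\in HS^\infty(s)$. As $(I+bc)^{-1}b$ is a left inverse of the invertible element $a$, we conclude $a^{-1}=(I+bc)^{-1}b\in HS^\infty(s)$, being a product of two elements of $HS^\infty(s)$.

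To prove that $T(\phi)$ is invertible in $HS(s)$ with inverse in $HS^\infty(s)$, I would proceed in two steps, the first of which is the main obstacle: showing that the winding number of $\phi$ is zero. Here I would use the identification $HS(s)\cong\sigma^{-1}(C(\R/\Z))\subseteq C(\Z_s^\times)\otimes\mathcal{T}$ from Section 2. Since a C$^*$-subalgebra is inverse-closed, $a$ is invertible in $C(\Z_s^\times)\otimes\mathcal{T}$, so evaluating at any point of $\Z_s^\times$ produces an invertible element of $\mathcal{T}$ whose Toeplitz symbol equals $\phi$ (using that $\sigma(a)=q(a)$ is independent of the $\Z_s^\times$-variable). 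An invertible operator has Fredholm index zero, and the index of an element of $\mathcal{T}$ is governed by the winding number of its symbol, so the winding number of $\phi$ vanishes. The second step is then routine: because $\phi$ is smooth, nowhere vanishing, and of winding number zero, it admits a Wiener--Hopf factorization $\phi=\phi_-\phi_+$ in which $\phi_+,\phi_+^{-1}\in C^\infty(\R/\Z)$ have only nonnegative frequencies and $\phi_-,\phi_-^{-1}\in C^\infty(\R/\Z)$ only nonpositive frequencies (concretely, write $\phi=e^{g}$ with $g\in C^\infty(\R/\Z)$ and split $g$ into its frequency-$\geq 0$ and frequency-$<0$ parts). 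The multiplicativity relations $T(\psi_+)T(\chi_+)=T(\psi_+\chi_+)$, $T(\psi_-)T(\chi_-)=T(\psi_-\chi_-)$ and $T(\psi_-)T(\chi_+)=T(\psi_-\chi_+)$ recorded in the proof of Proposition \ref{IVIdeal} then yield $T(\phi_\pm)T(\phi_\pm^{-1})=T(\phi_\pm^{-1})T(\phi_\pm)=I$ and $T(\phi)=T(\phi_-)T(\phi_+)$, so $T(\phi)$ is invertible in $HS(s)$ with inverse $T(\phi_+^{-1})T(\phi_-^{-1})\in HS^\infty(s)$ (using that $HS^\infty(s)$ is an algebra).

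In summary, all of the argument beyond the first step is a formal manipulation of Toeplitz operators built on Propositions \ref{IVIdeal} and \ref{IpluscInv}; the only essential, non-algebraic use of the hypothesis that $a$ is invertible is in forcing the index obstruction to vanish. An equivalent way to package the second step is to invoke the classical fact that a Toeplitz operator on $\ell^2$ with continuous, nowhere-vanishing, winding-number-zero symbol is invertible, and to transport this invertibility back into $HS(s)$ through the constant-section inclusion $\mathcal{T}\hookrightarrow C(\Z_s^\times)\otimes\mathcal{T}$, whose image meets $\sigma^{-1}(C(\R/\Z))$ precisely in the Toeplitz operators $T(\psi)$, $\psi\in C^\infty(\R/\Z)$.
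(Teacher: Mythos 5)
Your proof is correct, but it takes a genuinely different and heavier route than the paper's. The paper never shows that $T(\phi)$ itself is invertible, and in particular never needs the winding-number/Fredholm-index argument or the Wiener--Hopf factorization. Instead, since $q$ is a unital homomorphism, $q(a^{-1})=1/\phi\in C^\infty(\R/\Z)$, so one may write $a^{-1}=T(1/\phi)+\tilde b$ with $\tilde b\in I_s$ a priori; then
\[
\tilde b = a^{-1}\bigl(I-aT(1/\phi)\bigr)=a^{-1}\bigl(I-T(\phi)T(1/\phi)-bT(1/\phi)\bigr)=:a^{-1}\tilde c,
\]
and $\tilde c\in I_s^\infty$ because Proposition~\ref{IVIdeal} gives $T(\phi)T(1/\phi)-I=T(\phi)T(1/\phi)-T(\phi\cdot 1/\phi)\in I_s^\infty$ for \emph{any} smooth $\phi$, nowhere-vanishing or not, and $bT(1/\phi)\in I_s^\infty$ since $I_s^\infty$ is an ideal. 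One then runs the same ``$\delta_{\mathbb P}^j(\cdot)=\sum(\text{bounded})\cdot(I_s^\infty)$'' induction as in Proposition~\ref{IpluscInv} directly on $\tilde b=a^{-1}\tilde c$, using $\delta_{\mathbb P}(a^{-1})=-a^{-1}\delta_{\mathbb P}(a)a^{-1}$, to conclude $\tilde b\in I_s^\infty$. The crucial difference is thus the order of operations: you first invert $T(\phi)$ (requiring the vanishing-index argument and factorization) and then apply Proposition~\ref{IpluscInv} to $I+T(\phi)^{-1}c$; the paper applies the derivation-bookkeeping trick once, directly to $a^{-1}$, and lets the already-invertible $a$ do the work, so only the automatic relation $T(\phi)T(1/\phi)-I\in I_s^\infty$ is needed. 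Your approach is conceptually transparent and produces the extra fact that $T(\phi)$ is invertible with $T(\phi)^{-1}\in HS^\infty(s)$, at the cost of importing the $\mathcal{T}$-index obstruction and the Wiener--Hopf machinery; the paper's approach is shorter, stays within the norms and derivation estimates already developed, and requires no topological input. (One small inaccuracy in your summary: the constant-section image of $\mathcal{T}$ in $C(\Z_s^\times)\otimes\mathcal{T}$ lies entirely inside $\sigma^{-1}(C(\R/\Z))$, not only in the $T(\psi)$'s, since it also contains the compacts; this does not affect your argument.)
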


\begin{proof} 
Since $a\in HS^\infty(s)$, $a=T(\phi)+b$ for some $\phi\in C^\infty(\R/\Z)$ and $b\in I_s^\infty$.  Since the quotient map $q(a)=\phi$ is a homomorphism, $q(a^{-1})=1/\phi$.  Since $\phi\neq 0$, $1/\phi$ is in $C^\infty(\R/\Z)$.  Write $a^{-1}=T(1/\phi)+\tilde{b}$ for some $\tilde{b}$.  We will show that $\tilde{b}\in I_s^\infty$.

Rearranging terms we get 
\begin{align*}
\tilde{b}&=a^{-1}-T(1/\phi) = a^{-1}(I-aT(1/\phi)) \\
&= a^{-1}(I-(T(\phi)+b)T(1/\phi)) \\
&= a^{-1}(I-T(\phi)T(1/\phi)+bT(1/\phi))
\end{align*}
By Proposition \ref{IVIdeal}, $I-T(\phi)T(1/\phi)\in I_s^{\infty}$ and $bT(1/\phi)\in I_s^\infty$, so consequently, there is a $\tilde{c}\in I_s^\infty$ such that $\tilde{b}=a^{-1}\tilde{c}$.  It follows from Proposition \ref{nprop} that
\begin{equation}\label{inverse_est}
\|\tilde{b}\|_{0,N}\le\|a^{-1}\|\|\tilde{c}\|_{0,N}<\infty\,.
\end{equation}
Computing $\dpp$ on $\tilde{b}$ we have
\begin{equation*}
\dpp(\tilde{b}) = \dpp(a^{-1})\tilde{c} + a^{-1}\dpp(\tilde{c}) = -a^{-1}\dpp(a)a^{-1}\tilde{c} + a^{-1}\dpp(\tilde{c})\,.
\end{equation*}
So, as in the proofs of Proposition  \ref{IpluscInv}  or  Proposition \ref{IVIdeal}, we have, inductively, for any $j$ that
\begin{equation*}
\dpp^j(\tilde{b}) = \sum_ia_ib_i \quad\textrm{finite sum,}
\end{equation*}
with $a_i$ bounded and $b_i$ are in $I_s^{\infty}$.  Using this and the estimate in equation \eqref{inverse_est}, we see that $\|\tilde{b}\|_{M,N}$ is finite for all $M$ and $N$.  Thus $\tilde{b}\in I_s^\infty$, completing the proof.

\end{proof}

\begin{theo}\label{HSholFC}
The smooth Hensel-Steinitz algebra $HS^\infty(s)$ is closed under the holomorphic functional calculus. In other words, for any $a\in HS^\infty(s)$ and a function $g$ that is holomorphic on an open neighborhood containing the spectrum of $a$ we have $g(a)\in HS^\infty(s)$.
\end{theo}

\begin{proof}
If $C$ is a contour around an open set containing the spectrum of $a$ then the Cauchy integral formula gives us
\[g(z)=\dfrac{1}{2\pi i}\int_C \dfrac{g(\zeta)}{\zeta-z}\,d\zeta\]
Then we define $g(a)$ by
\[g(a)=\dfrac{1}{2\pi i}\int_C g(\zeta)(\zeta-a)^{-1}\,d\zeta \]

It is thus enough to show that if $a$ is invertible in $HS(s)$ and $a\in HS^{\infty}(s)$ then $a^{-1}\in HS^\infty(s)$. However, this was established in the previous proposition.
\end{proof}

\subsection{Smooth Stability of $HS^\infty(s)$} To establish stability of $HS^{\infty}(s)$ under smooth calculus of self-adjoint elements we repeat similar ideas from the study of stability of $I_s^\infty$ and study exponentials $e^{ia}$ of self-adjoint elements $a\in HS^\infty(s)$. We first identify the Toeplitz operator part of the exponentials and then estimate the difference using essentially Duhamel's principle. 

The key inequality to estimate the $M,N$-norms of products, Proposition \ref{mnprop} part $(4)$, requires a better understanding of the $M,0$-norms of exponentials, which is done in the next statement. We follow reference \cite{KMP3}, Proposition 3.5.

%

\begin{prop}\label{eicestimate} 
Suppose $c\in I_s^\infty$ is a self-adjoint element.  Then we have the following inequality:
\begin{equation*}
\mnnorm{e^{ic}}{0} \leq \prod_{j=1}^M(1+\mnnorm[j]{c}{0})^{2^{M-j}}\,.
\end{equation*}
\end{prop}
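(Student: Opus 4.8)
The plan is to estimate $\mnnorm{e^{ic}}{0} = \|e^{ic}(1+\PP)^M\|$ by peeling off one factor of $\dpp$ at a time via the commutator identity
\begin{equation*}
e^{ic}(1+\PP)^M = (1+\PP)^M e^{ic} - \partial_M(e^{ic}),
\end{equation*}
so that $\mnnorm{e^{ic}}{0} \leq \|e^{ic}\|\,\nnorm{(1+\PP)^0}\cdot(\text{stuff}) + \|\partial_M(e^{ic})\|$; since $c$ is self-adjoint, $\|e^{ic}\|=1$, and the task reduces to controlling $\|\partial_M(e^{ic})\|$ and more generally $\|\partial_j(e^{ic})\|$ in terms of the quantities $\mnnorm[j]{c}{0}$. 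The natural tool is Duhamel's principle exactly as in equation \eqref{duhamel}: for each $j\geq 1$,
\begin{equation*}
\partial_j(e^{ic}) = i\int_0^1 e^{i(1-t)c}\,\partial_j(e^{ic} \text{-ingredient})\cdots
\end{equation*}
but more usefully one should differentiate the \emph{product} $e^{i(1-t)c}(1+\PP)^j e^{itc}$ and recognize that $\partial_j$ of a product of two exponentials splits via the Leibniz rule for $\partial_j$ applied to $(1+\PP)^j$, generating lower-order $\partial_k$ terms. This is where the recursive structure — and the doubling exponents $2^{M-j}$ — will come from.

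The key steps, in order, are: (1) record that for self-adjoint $c$ one has $\|e^{ic}\| = 1$, and that $\partial_j$ is a derivation, so $\partial_j(e^{ic})$ can be written through Duhamel as an integral $i\int_0^1 e^{i(1-t)c}\partial_j(c)e^{itc}\,dt$ when $j=1$, and for higher $j$ via \eqref{partialjdpp} expressing $\partial_j$ through $\dpp^k$, $k\leq j$, times powers of $(1+\PP)$; (2) set up the recursion: bound $\|\partial_M(e^{ic})\|$ in terms of $\mnnorm[M]{c}{0}$ and $\|\partial_k(e^{ic})\|$ for $k<M$, where each appearance of a lower-order exponential term contributes a factor that, upon unwinding, squares the accumulated product — this is the origin of the exponents $2^{M-j}$; (3) solve the recursion explicitly to get the stated product $\prod_{j=1}^M(1+\mnnorm[j]{c}{0})^{2^{M-j}}$, checking the base case $M=0$ (where the right side is the empty product $1 = \|e^{ic}\|$) and the step $M\to M+1$, where each of the two factors in a product estimate à la Proposition \ref{mnprop}(4) brings in one copy of the $M$-level bound, hence the doubling. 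Induction on $M$ is the clean way to organize this; the combinatorial identity \eqref{partialjdpp} together with $\dpp^j(e^{ic}) = \sum(\text{products of }\dpp^{k}(c)\text{ and }e^{ic})$ feeds the inductive step.

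I expect the main obstacle to be bookkeeping the recursion so that the exponents come out \emph{exactly} as $2^{M-j}$ rather than something merely comparable. The subtlety is that when one estimates $\|\partial_{M+1}(e^{ic})\|$, Duhamel produces terms of the schematic form $e^{i(1-t)c}\,\partial_{M+1}(c)\,e^{itc}$ (contributing $1+\mnnorm[M+1]{c}{0}$ with exponent $2^0=1$) plus cross terms where one exponential factor must itself be replaced by $e^{i(1-t)c}-I$ or where $\partial_j(c)$ for smaller $j$ multiplies a lower $\partial_k(e^{ic})$; controlling these lower terms re-invokes the inductive hypothesis, and since two such exponential factors can appear multiplicatively, the bound for level $M$ gets squared at level $M+1$, giving $2^{(M+1)-j} = 2\cdot 2^{M-j}$ as required. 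One must also be careful that $\mnnorm[j]{c}{0}$ is monotone in $j$ (Proposition \ref{mnprop}(2) with $N=0$ gives $\mnnorm[j]{c}{0}\leq\mnnorm[j+1]{c}{0}$), which is what lets the various lower-order contributions all be absorbed into the single product without extra constants. Modulo this careful tracking — which mirrors Proposition 3.5 of \cite{KMP3} — the argument is routine, and I would cite that reference for the details of the combinatorics while spelling out the self-adjointness reduction $\|e^{ic}\|=1$ and the Duhamel step here.
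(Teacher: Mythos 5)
Your opening line misidentifies the norm: by definition, $\mnnorm{a}{N}=\sum_{j=0}^M\binom{M}{j}\|\dpp^j(a)(1+\PP)^N\|$, so $\mnnorm{e^{ic}}{0}=\sum_{j=0}^M\binom{M}{j}\|\dpp^j(e^{ic})\|$, \emph{not} $\|e^{ic}(1+\PP)^M\|$. The latter is $\|e^{ic}\|_{0,M}=\nnorm[M]{e^{ic}}$, and for $M\geq 1$ it is actually infinite, since $e^{ic}$ is unitary while $(1+\PP)^M$ is unbounded. Because of this, the proposed decomposition $e^{ic}(1+\PP)^M=(1+\PP)^Me^{ic}-\partial_M(e^{ic})$ and the whole $\partial_j$ machinery are a detour that does not land on the stated bound; the $\|\partial_j(\cdot)\|_{0,N}$ seminorms are only \emph{equivalent} to the $\|\cdot\|_{M,N}$ norms (Proposition \ref{partialdppequiv}), so routing the argument through $\partial_j$ would produce a comparable bound with extra constants, not the precise product $\prod_{j=1}^M(1+\mnnorm[j]{c}{0})^{2^{M-j}}$.

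The paper's proof never introduces $\partial_j$ for $j>1$. It runs the induction on $M$ through the exact recursion $\mnnorm[M+1]{e^{ic}}{0}=\mnnorm{e^{ic}}{0}+\mnnorm{\dpp(e^{ic})}{0}$ from Proposition \ref{mnprop}(2), applies Duhamel only at first order, $\dpp(e^{ic})=i\int_0^1 e^{i(1-t)c}\dpp(c)e^{itc}\,dt$, then uses submultiplicativity of $\|\cdot\|_{M,0}$ (Proposition \ref{mnprop}(4)) to split the integrand into $\|e^{i(1-t)c}\|_{M,0}\,\|\dpp(c)\|_{M,0}\,\|e^{itc}\|_{M,0}$; the inductive hypothesis applied to the two exponential factors produces the square $P_M^2$ (with $P_M:=\prod_{j=1}^M(1+\mnnorm[j]{c}{0})^{2^{M-j}}$), and $\|\dpp(c)\|_{M,0}\leq\|c\|_{M+1,0}$ together with $P_M\leq P_M^2$ absorbs the leftover term into $\prod_{j=1}^{M+1}(1+\mnnorm[j]{c}{0})^{2^{M+1-j}}$. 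You correctly spotted the essential mechanism — induction on $M$, Duhamel, the two exponential factors driving the doubling, $\|e^{ic}\|=1$ for the base case, and monotonicity of $\mnnorm[j]{c}{0}$ in $j$ — so once you replace the wrong norm identity and drop the $\partial_M$ commutator in favor of the $\|\cdot\|_{M,0}$ recursion and first-order Duhamel, your sketch lines up with the paper's argument.
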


\begin{proof}
We prove this via induction. For $M=0$ it is clear that $\mnnorm[0]{e^{ic}}{0}=1$.  From Proposition \ref{mnprop} part (2) we have that
\begin{equation*}
\mnnorm[M+1]{e^{ic}}{0} = \mnnorm{e^{ic}}{0}+\mnnorm{\dpp(e^{ic})}{0}
\end{equation*}
Notice that we have
\begin{equation*}
\dpp(e^{ic})=\int_0^1 \frac{d}{dt}\left(e^{i(1-t)c}(1+\PP)e^{itc}\right)\,dt 
=i\int_0^1 e^{i(1-t)c}\dpp(c)e^{itc}\,dt\,.
\end{equation*}
Assume the inequality is true for $M$ and consider the following: 
\begin{align*}
\mnnorm[M+1]{e^{ic}}{0} &= \mnnorm{e^{ic}}{0}+\mnnorm{\dpp(e^{ic})}{0} \\
&\leq \prod_{j=1}^M(1+\mnnorm[j]{c}{0})^{2^{M-j}} + \mnnorm{\int_0^1 e^{i(1-t)c}\dpp(c)e^{itc}\,dt}{0}\\
&\leq \prod_{j=1}^M(1+\mnnorm[j]{c}{0})^{2^{M-j}} + \int_0^1 \|e^{i(1-t)c}\|_{M,0}\|\delta_{\mathbb P}(c)\|_{M,0}\|e^{itc}\|_{M,0}\,dt.
\end{align*}
Using the inductive assumption again we get
\begin{align*}
\mnnorm[M+1]{e^{ic}}{0} &
\leq\prod_{j=1}^M(1+\mnnorm[j]{c}{0})^{2^{M-j}}\\
&\qquad+\mnnorm{\dpp(c)}{0}\int_0^1\prod_{j=1}^M(1+(1-t)\mnnorm[j]{c}{0})^{2^{M-j}}\prod_{j=1}^M(1+t\mnnorm[j]{c}{0})^{2^{M-j}}\,dt \\
&\leq \prod_{j=1}^M(1+\mnnorm[j]{c}{0})^{2^{M-j}} + \left[\prod_{j=1}^M(1+\mnnorm[j]{c}{0})^{2^{M-j}}\right]^2 \mnnorm{\dpp(c)}{0}\,. 
\end{align*}
By Proposition \ref{mnprop}, we have the inequality 
\begin{equation*}
\mnnorm{\dpp(c)}{0}\leq \mnnorm[M+1]{c}{0},
\end{equation*} 
and thus we obtain the desired result:
\begin{align*}
\mnnorm[M+1]{e^{ic}}{0} &\leq \prod_{j=1}^M(1+\mnnorm[j]{c}{0})^{2^{M-j}} \left(1+\prod_{j=1}^M(1+\mnnorm[j]{c}{0})^{2^{M-j}}\mnnorm[M+1]{c}{0}\right) \\
&\leq \prod_{j=1}^M(1+\mnnorm[j]{c}{0})^{2^{M-j}} \prod_{j=1}^M(1+\mnnorm[j]{c}{0})^{2^{M-j}} \left(1+\mnnorm[M+1]{c}{0}\right) \\
&=\prod_{j=1}^{M+1}(1+\mnnorm[j]{c}{0})^{2^{M+1-j}}\,.
\end{align*}
\end{proof}

In order to prove the next theorem we need the following inequality.  
If $\phi\in C^\infty(\R/\Z)$ is real-valued, then we have the following estimate
\begin{equation}\label{eifestimate}
\Cnorm{M}{e^{i\phi}}\leq \prod_{j=1}^M(1+\|\phi\|_{C^j}).
\end{equation}
This was proved in Proposition 3.4 in \cite{KMP3}.

\begin{theo}\label{BvinfsmoothFC}
The algebra $HS^\infty(s)$ is closed under the smooth functional calculus for self-adjoint elements.  That is, if $a\in HS^\infty(s)$ is self-adjoint and $g$ is smooth on an open neighborhood of the spectrum of $a$ then $g(a)\in HS^\infty(s)$.
\end{theo}

\begin{proof}

Extend $g$ to be smooth, $L$-periodic in a neighborhood of the spectrum of $a$; then $g(\theta)=g(\theta+L)$.  

Then for $a=T(\phi)+c\in HS^\infty(s)$, we have $g(a)=\sum_{n\in\Z}g_n e^{2\pi ina/L}$.  Since $\mnnorm{\cdot}{N}$ is submultiplicative, we get $e^{2\pi i na/L} \in HS^\infty(s)$ for each $n$.  The sum will be in $HS^\infty(s)$ if there is a polynomial bound in $n$ for $\mnnorm{e^{2\pi ina/L}}{N}$.

Since for the quotient map $q:HS(s)\to C(\R/\Z)$ we have $q(a)=\phi$ and since $q$ is a homomorphism it follows that $q(e^{2\pi ina/L})= e^{2\pi in\phi/L}$.  Note that norms of $e^{2\pi i n \phi}$  grow at most polynomially in $n$ from equation \eqref{eifestimate}.  Thus we need to study the $M,N$-norms of the difference $e^{2\pi ina/L}-T(e^{2\pi in\phi/L}) \in I_s^\infty$ and show that they grow at most polynomially in $n$.  To do this, we again use essentially Duhamel's principle.

\begin{align*}
e^{i(T(\phi)+c)}-T(e^{i\phi}) &= \int_0^1 \frac{d}{dt}\left(e^{it(T(\phi)+c)}T(e^{i(1-t)\phi})\right)\,dt \\
&=i\int_0^1 e^{it(T(\phi)+c)}cT(e^{i(1-t)\phi})\,dt \\ 
&\qquad + \int_0^1 e^{it(T(\phi)+c)}\left(T(\phi)T(e^{i(1-t)\phi}) - T(fe^{i(1-t)\phi})\right)\,dt\,.
\end{align*}
Applying the $M,N$-norm to this and using Proposition \ref{mnprop} yields: 
\begin{align*}
\mnnorm{e^{i(T(\phi)+c)}-T(e^{i\phi})}{N} 
&\leq \int_0^1 \mnnorm{e^{it(T(\phi)+c)}}{0}\mnnorm{cT(e^{i(1-t)\phi})}{N} \,dt \\
&\qquad +\int_0^1 \mnnorm{e^{it(T(\phi)+c)}}{0}\mnnorm{T(\phi)T(e^{i(1-t)\phi}) - T(fe^{i(1-t)\phi})}{N}\,dt\,.
\end{align*}
Using equation \eqref{eifestimate} and Proposition \ref{eicestimate}  we have
\begin{align*}
&\mnnorm{e^{i(T(\phi)+c)}-T(e^{i\phi})}{N} \leq \left(\prod_{j=1}^M(1+\Cnorm{j}{\phi}+\mnnorm[j]{c}{0})^{2^{M-j}}\right) \mnnorm{c}{N}\left( \prod_{j=1}^M (1+\Cnorm{j}{\phi})\right) \\
&\quad +\left(\prod_{j=1}^M(1+\Cnorm{j}{\phi}+\mnnorm[j]{c}{0})^{2^{M-j}}\right)\left(\frac{\pi^2}{3}-1\right) \Cnorm{M}{\phi} \left(\prod_{j=1}^{M+N+2}(1+\Cnorm{j}{\phi})\right)\,.
\end{align*}
This yields the desired growth estimate in $n$.
\end{proof}

\section{Derivations}

The purpose of this section is to describe continuous derivations on $HS^\infty(s)$. The main result, Theorem \ref{der_theo},  exhibits a geometric structure of the space of derivations modulo inner derivations. This is achieved through a series of steps. We first observe that continuous derivations preserve the ideal  $I_s^\infty$ and thus define derivations on the factor algebra $HS^\infty(s)/I_s^\infty\cong C^\infty(\R/\Z)$. Conversely, we show that any derivation on $C^\infty(\R/\Z)$ can be lifted to a derivation on $HS^\infty(s)$. This observation reduces the study of derivations to those with range in $I_s^\infty$. We analyze continuous derivations $HS^\infty(s)\to I_s^\infty$, the key part of classification of derivations, by using Fourier components with respect to the natural circle action given by $\rho_\theta$ defined in equation \eqref{rhotheta}.
\subsection{Invariance of $I_s^\infty$ under Derivations}

\begin{prop}\label{DerivationsIdeals}
Let $\delta:HS^\infty(s)\to HS^\infty(s)$ be a continuous derivation.  Then we have the following inclusion: $\delta(I_s^\infty)\subseteq I_s^\infty$.
\end{prop}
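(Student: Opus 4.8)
The plan is to show that $I_s^\infty$ is characterized \emph{intrinsically} inside $HS^\infty(s)$ as an ideal, so that any continuous derivation must preserve it. The cleanest route is to identify $I_s^\infty$ with the kernel of the smooth quotient map $q\colon HS^\infty(s)\to C^\infty(\R/\Z)$ (Proposition \ref{IVIdeal}) and to exhibit this kernel as the smallest closed (in the $\|\cdot\|_{M,N}$ topology) two-sided ideal whose image under $q$ is contained in the subalgebra of $C^\infty(\R/\Z)$ consisting of functions vanishing at a point — or, better, to use the structure $I_s^\infty\cong C(\Zsstar)\otimes\mathcal{K}^\infty$ established above. Since $\mathcal{K}^\infty$ is the unique minimal (dense, holomorphically closed) ideal in the relevant Toeplitz-type picture, a derivation is forced to send it into itself.

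Concretely, I would argue as follows. First, recall that $I_s^\infty$ contains the matrix units $P_{ij}$ from \eqref{Pij} (tensored with $C(\Zsstar)$), which span a dense subalgebra, and that these satisfy $P_{ij}P_{kl}=\delta_{jk}P_{il}$. Given a continuous derivation $\delta$ and an element $a = P_{ij}$, write $P_{ij}=P_{ii}P_{ij}P_{jj}$ and apply the Leibniz rule: $\delta(P_{ij}) = \delta(P_{ii})P_{ij}P_{jj} + P_{ii}\delta(P_{ij})P_{jj} + P_{ii}P_{ij}\delta(P_{jj})$. The middle term lies in $P_{ii}\,HS^\infty(s)\,P_{jj}$. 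Now $P_{ii}$ and $P_{jj}$ are (up to the $C(\Zsstar)$ factor) finite-rank projections onto subspaces spanned by finitely many basis vectors $E_{s^m l'}$, hence $P_{ii}\,HS^\infty(s)\,P_{jj}$ consists of operators supported on a finite set of matrix entries — in particular it sits inside $I_s^\infty$. The outer two terms involve $\delta(P_{ii})$ and $\delta(P_{jj})$; using $P_{ii}^2=P_{ii}$ one gets $\delta(P_{ii}) = \delta(P_{ii})P_{ii}+P_{ii}\delta(P_{ii})$, and multiplying on both sides by $P_{ii}$ shows $P_{ii}\delta(P_{ii})P_{ii}=0$, so $\delta(P_{ii})P_{ij}P_{jj} = \delta(P_{ii})P_{ii}P_{ij}P_{jj}$, which again is $\delta(P_{ii})$ times a finite-rank operator and therefore lands in $I_s^\infty$ since $I_s^\infty$ is an ideal in $HS^\infty(s)$ (Proposition \ref{IVIdeal}). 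Hence $\delta(P_{ij})\in I_s^\infty$ for all matrix units. By continuity of $\delta$ with respect to the $\|\cdot\|_{M,N}$ topology, and completeness of $I_s^\infty$ (Proposition \ref{mnprop}, part (7)), it follows that $\delta$ maps the closed span of the $P_{ij}$ — which is all of $I_s^\infty$ — into $I_s^\infty$.

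There is a subtlety I should address: elements of $I_s^\infty$ are not finite linear combinations of matrix units but only rapidly convergent series, and $\delta$ need only be continuous, not bounded in any single norm. So after establishing the claim on the dense subalgebra $\mathcal{A}_0$ of finite combinations of $P_{ij}$'s (with $C(\Zsstar)$ coefficients, or a dense subalgebra thereof), I invoke: $\delta$ is continuous, $\mathcal{A}_0$ is dense in $I_s^\infty$ in the locally convex topology, $\delta(\mathcal{A}_0)\subseteq I_s^\infty$, and $I_s^\infty$ is closed (indeed complete) — therefore $\delta(I_s^\infty)=\delta(\overline{\mathcal{A}_0})\subseteq\overline{\delta(\mathcal{A}_0)}\subseteq\overline{I_s^\infty}=I_s^\infty$. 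The main obstacle, then, is the bookkeeping in the first step: verifying carefully that the ``finite corner'' $P_{ii}\,HS^\infty(s)\,P_{jj}$ really is contained in $I_s^\infty$ (this uses that $q(P_{ii})=0$, so a corner of $HS^\infty(s)$ cut down by elements of the ideal stays in the ideal — immediate from $I_s^\infty$ being a two-sided ideal), and making sure the density/continuity closure argument is legitimate in the Fréchet (not Banach) setting. Once those two points are nailed down, the proposition follows.
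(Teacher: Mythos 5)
Your core strategy is the same as the paper's: factor each matrix unit $P_{ij}$ as a product of matrix units, apply the Leibniz rule, conclude each resulting term lies in $I_s^\infty$ because $I_s^\infty$ is a two-sided ideal in $HS^\infty(s)$, and finish by density of finite combinations and continuity of $\delta$ in the Fr\'echet topology. The paper uses the simpler factorization $P_{ik}=P_{ij}P_{jk}$ into two factors (and separately treats $P_{ij}M_\lambda$ to absorb the $C(\Zsstar)$ coefficients), whereas you use the three-fold $P_{ij}=P_{ii}P_{ij}P_{jj}$; either works.

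However, parts of your justification are wrong or superfluous, and you should excise them. First, the projections $P_{ii}=M_{\chi_i}$ are \emph{not} finite rank: $P_{ii}$ projects onto $\overline{\operatorname{span}}\{E_{s^i l'}: s\nmid l'\}$, which is infinite-dimensional, so ``$P_{ii}\,HS^\infty(s)\,P_{jj}$ consists of operators supported on a finite set of matrix entries'' is false. It doesn't matter, because the only thing you need is $P_{ii}, P_{jj}, P_{ij}\in I_s^\infty$ together with $I_s^\infty$ being a two-sided ideal (Proposition \ref{IVIdeal}): then $\delta(P_{ii})P_{ij}P_{jj}\in HS^\infty(s)\cdot I_s^\infty\subseteq I_s^\infty$, $P_{ii}\delta(P_{ij})P_{jj}\in I_s^\infty\cdot HS^\infty(s)\cdot I_s^\infty\subseteq I_s^\infty$, and similarly for the third term. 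Second, the computation $P_{ii}\delta(P_{ii})P_{ii}=0$ is a dead end: the identity $\delta(P_{ii})P_{ij}P_{jj}=\delta(P_{ii})P_{ii}P_{ij}P_{jj}$ you deduce from it is trivially true because $P_{ii}P_{ij}=P_{ij}$, and neither fact is needed for the conclusion. Strip out both detours and your argument reduces precisely to the paper's. Finally, make sure to treat the $C(\Zsstar)$ coefficients explicitly as the paper does (via $\delta(P_{ij}M_\lambda)=\delta(P_{ij})M_\lambda+P_{ij}\delta(M_\lambda)$), rather than waving at ``a dense subalgebra thereof'': the density you need is of finite sums $\sum P_{ij}M_{\lambda_{ij}}$ in $I_s^\infty$ in the $\|\cdot\|_{M,N}$ topology, and that is where having the $M_\lambda$ factor in hand matters.
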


\begin{proof}
We first need to show that $\delta(P_{ij}M_\lambda)\in I_s^{\infty}$ for $\delta$ a continuous derivation, where $P_{ij}$ are given by equation \eqref{Pij}, and $M_\lambda$ is given by equation \eqref{Mphi}.  

Using Leibniz rule we have that
\begin{equation*}
\delta(P_{ik})=\delta(P_{ij}P_{rk}) = P_{ij}\delta(P_{rk})+\delta(P_{ij})P_{rk}\in I_s^\infty
\end{equation*}
since $I_s^\infty$ is an ideal. 
Next notice that 
\begin{equation*}
\delta(P_{ij}M_\lambda)=\delta(P_{ij})M_\lambda + P_{ij}\delta(M_\lambda) \in I_s^{\infty}
\end{equation*}
because of the above calculation and the fact that $I_s^{\infty}$ is an ideal.
Moreover, finite linear combinations of $P_{ij}M_\lambda$ are elements of $I_s^\infty$ and since such combinations are dense in $\mathcal{K}\otimes C(\Zsstar)$ and $\delta$ is continuous, it follows that $\delta(I_s^\infty)\subseteq I_s^\infty$.
\end{proof}

Any continuous derivation $\delta: HS^\infty(s)\to HS^\infty(s)$ defines a derivation $[\delta]$ in the factor algebra $[\delta]:C^\infty(\R/\Z)\to C^\infty(\R/\Z)$ given by 
\begin{equation*}
[\delta](a+I_s^\infty) = \delta(a)+I_s^\infty\,.
\end{equation*}
This is well defined, as if $a$ and $a'$ are such that $[a]=[a']$ then $a-a'\in I_s^\infty$ and Proposition \ref{DerivationsIdeals} implies that $\delta(a-a')\in I_s^\infty$, so $[\delta(a)]=[\delta(a')]$.  Since $\delta$ is a derivation on $HS^\infty(s)$, it follows that $[\delta]$ is a derivation on $C^\infty(\R/\Z)$.  General theory, see \cite{newns}, implies that derivations on smooth functions on manifolds are smooth vector fields, so there is an $\varphi\in C^\infty(\R/\Z)$ such that
\begin{equation}\label{classofdelta}
[\delta]=\varphi(\theta)\frac{1}{2\pi i}\frac{d}{d\theta}\,.
\end{equation}
Below we show that any derivation on $C^\infty(\R/\Z)$ can be lifted to a derivation on $HS^\infty(s)$.

For an $\varphi\in C^\infty(\R/\Z)$, decompose it as $\varphi=\varphi_+ +\varphi_-$, where
\begin{equation*}
\varphi_+(\theta) = \sum_{n\ge0}\varphi_ne^{2\pi in\theta}\quad\textrm{and}\quad \varphi_-(\theta) = \sum_{n<0}\varphi_ne^{2\pi in\theta}\,.
\end{equation*}

\begin{prop}\label{special_der_delta_F}
The formula 
\begin{equation*}
\delta_\varphi(a) = [T(\varphi_+)\mathbb{P} +\mathbb{P} T(\varphi_-),a]
\end{equation*}
defines a continuous derivation on $HS^\infty(s)$ so that the following hold for the quotient map:
\begin{equation*}
{q}(\delta_\varphi(V)) = \varphi(\theta)e^{2\pi i\theta}\quad\textrm{and}\quad {q}(\delta_\varphi(V^*))=-\varphi(\theta)e^{-2\pi i\theta}\,.
\end{equation*}
\end{prop}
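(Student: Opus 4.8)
The plan is to verify directly that $\delta_\varphi$ is well-defined and continuous on $HS^\infty(s)$, and then compute its action on the quotient. The operator $A := T(\varphi_+)\mathbb{P} + \mathbb{P}T(\varphi_-)$ is unbounded, so the first point of concern is that $[A,a]$ is actually bounded (indeed, lands in $HS^\infty(s)$) for $a\in HS^\infty(s)$. For this I would use that $\dpp = [\PP,\,\cdot\,]$ is a well-understood derivation preserving $HS^\infty(s)$ (Propositions \ref{mnprop}, \ref{IVIdeal}), and rewrite the commutator so that $\PP$ only ever appears inside such commutators. Concretely, for $a\in HS^\infty(s)$ write
\begin{equation*}
[T(\varphi_+)\PP + \PP T(\varphi_-),a] = T(\varphi_+)[\PP,a] + [T(\varphi_+),a]\PP + [\PP,a]T(\varphi_-) + \PP[T(\varphi_-),a].
\end{equation*}
The first and third terms are $T(\varphi_\pm)\dpp(a)$ and $\dpp(a)T(\varphi_-)$, which lie in $HS^\infty(s)$ by Proposition \ref{IVIdeal} (and its generalization). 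For the remaining two terms I would note that $[T(\varphi_+),a]$ and $[T(\varphi_-),a]$ both lie in $I_s^\infty$: this uses the decomposition $a = T(\phi)+c$, the fact that $T(\varphi_+)T(\phi)-T(\varphi_+\phi)\in I_s^\infty$ and $T(\varphi_-)T(\phi)-T(\varphi_-\phi)\in I_s^\infty$ from Proposition \ref{IVIdeal}, that $T(\varphi_+)$ commutes with $T(\phi_+)$ (and $T(\varphi_-)$ with $T(\phi_-)$), and that $[T(\varphi_\pm),c]\in I_s^\infty$ since $I_s^\infty$ is an ideal. Then $[T(\varphi_+),a]\PP + \PP[T(\varphi_-),a]$ is handled by observing that right/left multiplication of an element of $I_s^\infty$ by $\PP$ preserves $I_s^\infty$ — which in turn follows from $\partial_j(b) = [(I+\PP)^j,b]$ preserving $I_s^\infty$ (equation \eqref{partialj}, Proposition \ref{partialdppequiv}), since $b\PP = (I+\PP)b - b - \partial_1(b)$, keeping in mind that $\PP$ annihilates $E_0$ so there are no domain issues on the relevant dense subspace $\mathscr{D}$.

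Next I would check the Leibniz rule: since $\delta_\varphi = [A,\,\cdot\,]$ is formally an inner derivation, $\delta_\varphi(ab) = [A,ab] = [A,a]b + a[A,b]$ holds algebraically, and the computation above shows each piece is a genuine bounded operator in $HS^\infty(s)$, so the identity is valid in $HS^\infty(s)$. Continuity — i.e. estimates of $\|\delta_\varphi(a)\|_{M,N}$ in terms of finitely many $\|a\|_{M',N'}$ and the $C^k$-norms of $\varphi$ — follows by applying the $M,N$-norm to the four-term expansion above and invoking the product estimates in Propositions \ref{mnprop}, \ref{IVIdeal}, \ref{fancytoeplitz}, together with $\|\dpp(a)\|_{M,N}\le\|a\|_{M+1,N}$ and the analogous bounds for multiplication by $\PP$ on $I_s^\infty$ coming from \eqref{partialjdpp}; these are routine once the structural decomposition is in place.

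Finally, the quotient computation. Using $q(T(\psi)) = \psi$ and $q(c) = 0$ for $c\in I_s^\infty$, and that $q$ kills the terms involving $\PP$ modulo $I_s^\infty$ in the sense made precise above, one gets $q(\delta_\varphi(a))$ from the genuinely Toeplitz part $T(\varphi_+)\dpp(a) + \dpp(a)T(\varphi_-)$ plus corrections in $I_s^\infty$. For $a = V$ we have $\dpp(V) = V$, so modulo $I_s^\infty$, $\delta_\varphi(V) \equiv T(\varphi_+)V + VT(\varphi_-)$, and applying $q$ gives $\varphi_+(\theta)e^{2\pi i\theta} + e^{2\pi i\theta}\varphi_-(\theta) = \varphi(\theta)e^{2\pi i\theta}$; similarly $\dpp(V^*) = -V^*$ gives $q(\delta_\varphi(V^*)) = -\varphi(\theta)e^{-2\pi i\theta}$. (Here one should double-check that the non-Toeplitz commutator terms $[T(\varphi_\pm),V]\PP$, etc., indeed lie in $I_s^\infty$ for $a=V$; since $V = T(e^{2\pi i\theta})$ has $\phi = e^{2\pi i\theta} = \phi_+$ and $c = 0$, the term $[T(\varphi_+),V] = T(\varphi_+)T(\phi_+) - T(\phi_+)T(\varphi_+) = 0$ exactly, while $[T(\varphi_-),V] = T(\varphi_-)T(\phi_+) - T(\phi_+)T(\varphi_-) \in I_s^\infty$, so multiplying by $\PP$ stays in $I_s^\infty$ as above.)

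I expect the main obstacle to be the first step: carefully justifying that the unbounded pieces $[T(\varphi_+),a]\PP$ and $\PP[T(\varphi_-),a]$ are bounded operators in $HS^\infty(s)$ and not merely densely defined — i.e. showing that multiplication of an $I_s^\infty$ element by $\PP$ on either side stays inside $I_s^\infty$ with good norm control. Everything else is bookkeeping with the product estimates already established, but this domain/boundedness issue is where the argument genuinely uses the rapid-decay structure built into $I_s^\infty$.
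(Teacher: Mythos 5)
Your approach is correct but takes a genuinely different route from the paper's. The paper computes $\delta_\varphi(V)$ exactly by exploiting the specific identities $[\PP,V]=V$ and $\PP VV^*=\PP$ (so that, term by term, the unbounded $\PP$'s cancel and one arrives at the clean closed form $\delta_\varphi(V)=VT(\varphi)$), then applies $q$; well-definedness and continuity on all of $HS^\infty(s)$ are asserted briefly rather than unpacked. You instead make the structural decomposition
\begin{equation*}
[A,a]=T(\varphi_+)\dpp(a)+[T(\varphi_+),a]\PP+\dpp(a)T(\varphi_-)+\PP[T(\varphi_-),a]
\end{equation*}
and argue each piece lands in $HS^\infty(s)$; the pivotal new ingredient is that one-sided multiplication by $\PP$ preserves $I_s^\infty$, which you (correctly) reduce to the relation $b\PP=(I+\PP)b-b-\dpp(b)$ together with the $M,N$-norm estimates, and which the paper never states. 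That step is sound: $\dpp(b\PP)=\dpp(b)\PP$ and $\|\dpp^j(b)\PP\|_{0,N}\le\|\dpp^j(b)\|_{0,N+1}$, and $\PP b=b\PP+\dpp(b)$ handles left multiplication. What your route buys is a transparent proof of well-definedness and continuity on all of $HS^\infty(s)$ without relying on special algebraic identities of $V$; what it costs is the exact formula $\delta_\varphi(V)=VT(\varphi)$, which the paper's cancellation produces and which is occasionally handy downstream (and in fact the paper's proof only spells out the $V$ case, leaving $V^*$ implicit, whereas you cover both). Both quotient computations agree: $q(\delta_\varphi(V))=\varphi_+e^{2\pi i\theta}+e^{2\pi i\theta}\varphi_-=\varphi(\theta)e^{2\pi i\theta}$ and likewise for $V^*$.
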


\begin{proof}
That $\delta_\varphi$ is continuous is straightforward: multiplication by a Toeplitz operator $T(\varphi)$ or $\PP$ is continuous, as are sums of such.

For a given $\varphi \in C^\infty(\R/\Z)$ and writing $\varphi = \varphi_+ + \varphi_-$, we have the following:
\begin{equation*}
\delta_\varphi(V)= T(\varphi_+) \PP V -VT(\varphi_+)\PP + \PP T(\varphi_-) V - V\PP T(\varphi_-)\,.
\end{equation*} 
Using the identities $\PP VV^*=\PP$ and $[\PP,V]=V$ and the fact that $V$ commutes with $T(\varphi_+)$, we have
\begin{equation*}
\delta_\varphi(V)=VT(\varphi_+) + \PP T(\varphi_-) V - V\PP T(\varphi_-)\,.
\end{equation*}
Using the Fourier series expansion of $\varphi$, we have the following calculation:
\begin{equation*}
\begin{aligned}
\PP T(\varphi_-) V - V\PP T(\varphi_-) &= \sum_{n<0} \varphi_n \PP (V^*)^{-n} V - V\PP \sum_{n<0} \phi_n(V^*)^{-n} \\
&= \sum_{n<0} \varphi_n \PP (V^*)^{-n-1}  - V\PP \sum_{n<0} \phi_n(V^*)^{-n} \\
&=  \sum_{n<0} \varphi_n \PP V (V^*)^{-n}  - V\PP \sum_{n<0} \phi_n(V^*)^{-n}
\end{aligned}
\end{equation*}
where we used the fact that $\PP VV^*=\PP$ again.  It now follows that
\begin{equation*}
\PP T(\varphi_-) V - V\PP T(\varphi_-) = V T(\varphi_-)\,.
\end{equation*}
Combining this with the above calculation we get that
\begin{equation*}\delta_\varphi(V)=V T(\varphi_+)+VT(\varphi_-) = VT(\varphi)\,.
\end{equation*}

Under the mapping $q$ we have 

\begin{equation*}
q(\delta_\varphi(V))= q(VT(\varphi)) = e^{2\pi i \theta}\varphi(\theta) = \varphi(\theta)\dfrac{1}{2\pi i}\dfrac{d}{d\theta} e^{2\pi i\theta} = [\delta_\varphi](q(V)).
\end{equation*}
\end{proof}

It follows by continuity that for the derivation $\delta_\varphi$ defined in the above proposition, given an $a\in HS^\infty(s)$, we have for the following formula for the quotient
\begin{equation*}
{q}(\delta_\varphi(a)) = \varphi(\theta)\frac{1}{2\pi i}\frac{d}{d\theta}{q}(a)(\theta)\,.
\end{equation*}
Hence, $\delta_\varphi$ is a lift of the derivation
\begin{equation*}
\varphi(\theta)\frac{1}{2\pi i}\frac{d}{d\theta}
\end{equation*}
on $C^\infty(\R/\Z)$.

\subsection{Fourier Components of Derivations}
The main tool to study derivations in $HS^\infty(s)$ is a version of Fourier transform. This is because the corresponding Fourier coefficients are derivations with additional covariance properties that makes it possible to fully classify them.

Given $n\in\Z$, a derivation $\delta:HS^\infty(s)\to HS^\infty(s)$ is said to be a {\it $n$-covariant derivation} if the relation 
\begin{equation*}
\rho_\theta^{-1}\delta\rho_\theta(a)= e^{-2\pi in\theta} \delta(a)
\end{equation*}
holds for all $\theta\in\R/\Z$, where $\rho_\theta$ was defined in equation \eqref{rhotheta}.  When $n=0$ we say that the derivation is invariant.  With this definition, we point out that $\delta_\mathbb{P}:HS^\infty(s)\to HS^\infty(s)$ is an invariant continuous derivation.

\begin{defin}
If $\delta$ is a continuous derivation in $HS^\infty(s)$, the {\it $n$-th Fourier component} of $\delta$ is defined as: 
\begin{equation*}
\delta_n(a)= \int_0^1 e^{2\pi in\theta} \rho_\theta^{-1}\delta\rho_\theta(a)\, d\theta\,.
\end{equation*}
\end{defin}
Below we describe properties of Fourier components of derivations and of $n$-covariant derivations in general.

\begin{prop}\label{ncovariantdeltan}
Let $\delta:HS^\infty(s)\to HS^\infty(s)$ be a continuous derivation.  Then $\delta_n:HS^\infty(s)\to HS^\infty(s)$ is a continuous $n$-covariant derivation, where $\delta_n$ are the $n$-th Fourier components of $\delta$. Moreover,  $\delta=0$ if and only if $\delta_n=0$ for all $n\in\Z$.
\end{prop}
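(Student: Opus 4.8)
The plan is to verify each asserted property of $\delta_n$ in turn, relying on the fact that $\rho_\theta$ is a continuous one-parameter family of automorphisms of $HS^\infty(s)$ and that $\delta$ is a continuous derivation. First I would check that $\delta_n$ is well-defined and continuous: the integrand $\theta\mapsto e^{2\pi in\theta}\rho_\theta^{-1}\delta\rho_\theta(a)$ is a continuous (hence Bochner-integrable) $HS^\infty(s)$-valued function on the compact interval $[0,1]$, since $\rho_\theta$ acts continuously in $\theta$ with respect to the locally convex topology on $HS^\infty(s)$ (on generators $\rho_\theta(V)=e^{2\pi i\theta}V$ and $\rho_\theta(M_f)=M_f$, so $\|\rho_\theta(a)\|_{M,N}$ is bounded uniformly in $\theta$ and depends continuously on $\theta$), and $\delta$ is continuous. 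Completeness of $HS^\infty(s)$, established earlier, guarantees the integral lands in $HS^\infty(s)$, and the seminorm estimate $\|\delta_n(a)\|_{M,N}\leq \sup_\theta\|\rho_\theta^{-1}\delta\rho_\theta(a)\|_{M,N}$ gives continuity of $\delta_n$.

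Next I would check that $\delta_n$ is a derivation. Each $\rho_\theta^{-1}\delta\rho_\theta$ is a derivation (conjugate of a derivation by an automorphism), so for fixed $\theta$ the Leibniz rule holds; integrating against $e^{2\pi in\theta}\,d\theta$ and using bilinearity of multiplication together with the continuity needed to pull the product inside the integral (multiplication by a fixed element is continuous on $HS^\infty(s)$, and the integral commutes with continuous linear maps) yields $\delta_n(ab)=\delta_n(a)b+a\delta_n(b)$. For $n$-covariance, I would compute $\rho_\psi^{-1}\delta_n\rho_\psi(a)$ by substituting the definition, using $\rho_\psi^{-1}\rho_\theta^{-1}=\rho_{\theta+\psi}^{-1}$ and $\rho_\theta\rho_\psi=\rho_{\theta+\psi}$ (the circle action is a group action), and then changing variables $\theta\mapsto\theta-\psi$ in the integral; the periodicity of the integrand lets the integral stay over a period, and the leftover factor $e^{-2\pi in\psi}$ comes out, giving $\rho_\psi^{-1}\delta_n\rho_\psi(a)=e^{-2\pi in\psi}\delta_n(a)$.

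Finally, for the equivalence $\delta=0\iff\delta_n=0$ for all $n$, the forward direction is immediate from the definition. For the converse, the point is that $\theta\mapsto\rho_\theta^{-1}\delta\rho_\theta(a)$ is, for each fixed $a$, a continuous $HS^\infty(s)$-valued function on $\R/\Z$ whose Fourier coefficients are precisely $\delta_n(a)$; if all of these vanish, then by uniqueness in Fourier theory (applied seminorm-by-seminorm, or by pairing with continuous linear functionals) the function is identically zero, and evaluating at $\theta=0$ gives $\delta(a)=0$. I would phrase this using the standard fact that a continuous function on the circle with values in a complete locally convex space is determined by its Fourier coefficients, which is the same Fourier-series principle already invoked for elements of $HS(s)$ and $I_s$ earlier in the paper.

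The main obstacle I anticipate is purely technical rather than conceptual: carefully justifying that the Bochner-type integral defining $\delta_n$ interacts correctly with the locally convex (Fréchet) topology on $HS^\infty(s)$ — specifically that the integrand is genuinely continuous in the topology given by the seminorms $\|\cdot\|_{M,N}$, that the integral commutes with left and right multiplication and with the continuous derivation $\delta$, and that vanishing of all Fourier coefficients forces the vector-valued function to vanish. None of these steps is deep, but they require knowing that $HS^\infty(s)$ is complete (Proposition \ref{mnprop}) and that the relevant operations are continuous, so the write-up mostly amounts to assembling these facts in the right order.
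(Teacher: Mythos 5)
Your proposal is correct and follows essentially the same route as the paper: verify continuity and the derivation property from the completeness of $HS^\infty(s)$ and continuity of $\delta$ and $\rho_\theta$, establish $n$-covariance by the group law and a change of variables, and recover the implication $\delta_n=0\Rightarrow\delta=0$ from Fourier-theoretic uniqueness. The only cosmetic difference is that the paper phrases the last step via Ces\`aro (Fej\'er) mean convergence of the series $\sum_n\delta_n(a)$, whereas you invoke the equivalent uniqueness theorem for Fourier coefficients of a continuous vector-valued function on $\R/\Z$.
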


\begin{proof}
Since $\delta:HS^\infty(s)\to HS^\infty(s)$, $\rho_\theta:HS^\infty(s)\to HS^\infty(s)$ and $HS^\infty(s)$ is complete, it follows that $\delta_n(a)\in HS^\infty(s)$ for all $a\in HS^\infty(s)$. Since $\delta$ is a continuous derivation and the automorphism $\rho_\theta$ is continuous, it follows that $\delta_n$ is also continuous, see also the proof of the lemma below.  It is straightforward to see that $\delta_n$ is a derivation.

The following computation verifies that $\delta_n$ is $n$-covariant:
\begin{equation*}
\rho_\theta^{-1}\delta_n\rho_\theta(a) = \int_0^1 e^{2\pi in\varphi} \rho_\theta^{-1}\rho_\varphi^{-1}\delta\rho_\varphi\rho_\theta(a)\, d\varphi = \int_0^1 e^{2\pi in\varphi}\rho_{\theta + \varphi}^{-1}\delta\rho_{\theta + \varphi}(a)\, d\varphi\,.
\end{equation*}
Changing to new variable $\theta + \varphi$, and using the translation invariance of the measure, it now follows that $\rho_\theta^{-1}\delta_n\rho_\theta(a)= e^{-2\pi in\theta} \delta_n(a)$.

The usual Ces\`aro mean convergence result for Fourier components in harmonic analysis \cite{K} implies that if $\delta$ is a continuous derivation on $HS^\infty(s)$ then
\begin{equation}\label{deltacesaro}
\delta(a)=\lim_{L\rightarrow\infty} \frac{1}{L+1} \sum_{j=0}^L \left(\sum_{n=-j}^j \delta_n(a)\right)\,,
\end{equation}
for every $a\in HS^\infty(s)$.
In particular, $\delta$ is completely determined by its Fourier components $\delta_n$.
This shows that $\delta =0$ if and only if $\delta_n=0$ for all $n\in \Z$.

\end{proof}

We will also need the following growth estimate on Fourier components of derivations.
\begin{lem}\label{est_delta_n}
Let $\delta:HS^\infty(s)\to HS^\infty(s)$ be a continuous derivation.  Then, for every $k\ge0$, $M\ge0$ and $N\ge0$, there exist $M'\ge0$ and $N'\ge0$ and a constant $C_k=C_k(M,N)$ such that for every $a\in I_s^\infty$ we have:
\begin{equation*}
n^k\|\delta_n(a)\|_{M,N} \le C_k\|a\|_{M',N'}\,,
\end{equation*}
where $\delta_n$ are the $n$-th Fourier components of $\delta$.
\end{lem}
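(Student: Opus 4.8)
The plan is to reduce the estimate to a statement about Fourier coefficients of the elements $\delta(V^nM_F)$ and $\delta((V^*)^{-n}M_F)$, and to exploit the fact that $\delta$ is continuous between the Fréchet spaces $HS^\infty(s)$ and $HS^\infty(s)$, hence bounded: for each pair $(M,N)$ there exist $(M_1,N_1)$ and a constant $C$ with $\|\delta(b)\|_{M,N}\le C\|b\|_{M_1,N_1}$ for all $b\in HS^\infty(s)$. First I would write a general $a\in I_s^\infty$ in its norm-convergent expansion $a=\sum_{m\ge0}V^mM_{F_m}+\sum_{m<0}M_{F_m}(V^*)^{-m}$ and, using the covariance $\rho_\theta(V)=e^{2\pi i\theta}V$, $\rho_\theta(M_F)=M_F$, observe that $\rho_\theta^{-1}\delta\rho_\theta(a)=\sum_m e^{2\pi im\theta}\rho_\theta^{-1}\delta(V^mM_{F_m})+\cdots$; integrating against $e^{2\pi in\theta}$ then picks out a finite-ish combination of the ``$(-n)$-th Fourier pieces'' of the $\delta(V^mM_{F_m})$'s. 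A cleaner route, which I would actually take, is to first prove the estimate for the monomials $P_{ij}M_\lambda$ (equivalently $V^mM_F$ and $M_F(V^*)^{-m}$ with $F$ supported appropriately) using boundedness of $\delta$ together with the known norm bounds from Proposition \ref{mnprop} on products of Toeplitz-type operators, and then assemble the general case by the RD convergence of the series.

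The key steps, in order, are: (1) Record that $\delta_n$ is built from $\delta$ by $\delta_n(a)=\int_0^1 e^{2\pi in\theta}\rho_\theta^{-1}\delta\rho_\theta(a)\,d\theta$, and that $\rho_\theta$ is an isometry for every $\|\cdot\|_{M,N}$ (since $\rho_\theta(a)=e^{2\pi i\theta\PP}ae^{-2\pi i\theta\PP}$ commutes with $(1+\PP)^N$ and with $\delta_\PP$); hence a crude bound $\|\delta_n(a)\|_{M,N}\le \|\delta(a)\|_{M,N}\le C\|a\|_{M_1,N_1}$ holds with no power of $n$. (2) To gain the factor $n^k$, integrate by parts $k$ times in $\theta$: writing $\partial_\theta(\rho_\theta^{-1}\delta\rho_\theta(a)) = \rho_\theta^{-1}[\,\delta,\,2\pi i\,\delta_\PP\,]\rho_\theta(a)$ — more precisely $\frac{d}{d\theta}\rho_\theta^{-1}b\rho_\theta = \rho_\theta^{-1}(2\pi i)[b,\PP]\rho_\theta$ applied to $b=\delta\rho_\theta(a)$, combined with $\frac{d}{d\theta}\delta\rho_\theta(a)=\delta(2\pi i[\PP,\rho_\theta(a)])=2\pi i\,\delta(\delta_\PP(\rho_\theta(a)))$ — we get that $n^k\delta_n(a) = (2\pi i)^{-k}\int_0^1 e^{2\pi in\theta}\,\rho_\theta^{-1}\Big(\text{sum of terms }\delta_\PP^{p}\circ\delta\circ\delta_\PP^{q}(\rho_\theta(a))\text{ with }p+q\le k\text{ plus commutators of }\delta\text{ with }\delta_\PP\Big)\,d\theta$. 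Each such term is, up to the isometry $\rho_\theta$, of the form $\delta_\PP^p(\delta(\delta_\PP^q(a)))$; by Proposition \ref{mnprop}(5), $\|\delta_\PP^p(x)\|_{M,N}\le\|x\|_{M+p,N}$, and by continuity of $\delta$, $\|\delta(y)\|_{M+p,N}\le C'\|y\|_{M_2,N_2}$, and again $\|\delta_\PP^q(a)\|_{M_2,N_2}\le\|a\|_{M_2+q,N_2}$. (3) Taking $M'=M_2+k$, $N'=N_2$ and summing the finitely many terms gives $n^k\|\delta_n(a)\|_{M,N}\le C_k\|a\|_{M',N'}$, which is exactly the claim. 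One must be slightly careful that $\delta$ does not commute with $\delta_\PP$, so the iterated $\theta$-derivative genuinely produces nested expressions $\delta_\PP^{p_1}\delta\,\delta_\PP^{p_2}\delta\cdots$; but since there is only one $\delta$ per term in the final expansion — because each $\theta$-derivative either lands on $e^{2\pi in\theta}$, on the outer conjugation $\rho_\theta^{-1}(\cdot)\rho_\theta$, or inside as $\delta_\PP$ acting on $\rho_\theta(a)$, never creating a second $\delta$ — the structure stays $\delta_\PP^{p}\circ\delta\circ\delta_\PP^{q}$ with $p+q=k$, and the bookkeeping is a finite Leibniz-type sum.

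The main obstacle I anticipate is making step (2) rigorous: justifying differentiation under the integral sign and the repeated integration by parts requires that $\theta\mapsto\rho_\theta(a)$ is smooth in the Fréchet topology of $HS^\infty(s)$ and that $\delta$, being continuous and linear, commutes with this (Fréchet-valued) differentiation. For $a\in I_s^\infty$ with its RD expansion this smoothness is genuinely true — $\frac{d}{d\theta}\rho_\theta(a)=2\pi i\,\rho_\theta(\delta_\PP(a))$ with $\delta_\PP(a)\in I_s^\infty$ again, and iterating stays inside $I_s^\infty$ by Proposition \ref{mnprop}(1),(5) — so the argument goes through, but the write-up must invoke completeness of $HS^\infty(s)$ and the standard fact that a continuous linear map between Fréchet spaces commutes with Riemann/Bochner integration and with differentiation of smooth curves. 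Everything else is routine: the isometry property of $\rho_\theta$, the submultiplicativity and the inequality $\|\delta_\PP^p(x)\|_{M,N}\le\|x\|_{M+p,N}$ from Proposition \ref{mnprop}, and a finite combinatorial sum.
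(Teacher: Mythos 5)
Your proposal takes essentially the same route as the paper: use continuity of $\delta$ between Fr\'echet spaces for the $k=0$ case, then gain powers of $n$ by integrating by parts, with each $\theta$-derivative of $\rho_\theta^{-1}\delta\rho_\theta$ producing a commutator with $\delta_\PP$. The paper phrases the iteration more tightly --- it shows the exact identity $n\,\delta_n(a) = -\bigl([\delta_\PP,\delta]\bigr)_n(a)$ and then inducts on $k$ using that $[\delta_\PP,\delta]$ is again a continuous derivation, rather than expanding all $k$ integrations by parts into a Leibniz-type sum as you do --- but this is purely organizational. One small slip: your intermediate inequality $\|\delta_n(a)\|_{M,N}\le\|\delta(a)\|_{M,N}$ is not correct in general; the right chain is $\|\delta_n(a)\|_{M,N}\le\int_0^1\|\delta(\rho_\theta(a))\|_{M,N}\,d\theta\le C\int_0^1\|\rho_\theta(a)\|_{M_1,N_1}\,d\theta=C\|a\|_{M_1,N_1}$, using the isometry of $\rho_\theta$; your final bound is still correct.
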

\begin{proof}
Since $\delta$ is a continuous linear map on the Frech\'et space $HS^\infty(s)$ for every $M$ and $N$ there exist constants $C=C(M,N)$, $M',N'\ge 0$ such that for every $a\in I_s^\infty$, 
\begin{equation*}
\|\delta(a)\|_{M,N} \le C\|a\|_{M',N'}.
\end{equation*}
Using this and the fact that $\rho_\theta$ is continuous, we have 
\begin{equation}\label{normest}
\|\delta_n(a)\|_{M,N} = \left\| \int_0^1 e^{2\pi in\theta} \rho_\theta^{-1}\delta\rho_\theta(a)\,d\theta\right\|_{M,N} \le \int_0^1 \|\rho_\theta^{-1}\delta\rho_\theta(a)\|_{M,N}\,d\theta \leq C\|a\|_{M',N'}.
\end{equation}
Consider the following calculation, using integration by parts, the continuity of $[\dpp,\delta]$, and the continuous differentiability of $\theta \to \rho_\theta^{-1}\delta\rho_\theta$:
\begin{equation*}
\begin{aligned}
2\pi i n \delta_n(a) &= \int_0^1 2\pi ine^{2\pi in\theta}\rho_\theta^{-1}\delta\rho_\theta(a)\,d\theta =\int_0^1 \dfrac{d}{d\theta}(e^{2\pi in\theta}) \rho_\theta^{-1}\delta\rho_\theta(a)\,d\theta \\
&=-\int_0^1 e^{2\pi i n \theta}\dfrac{d}{d\theta} (\rho_\theta^{-1}\delta\rho_\theta(a))\,d\theta = -2\pi i \int_0^1\rho_\theta^{-1}[\dpp,\delta]\rho_\theta(a)\,d\theta \\
&=-2\pi i([\dpp,\delta])_n(a),
\end{aligned} 
\end{equation*}
where $([\dpp,\delta])_n$ is the $n$-th Fourier component of the derivation $[\dpp,\delta]$.  Consequently, using estimate \eqref{normest} for the commutator $[\dpp,\delta]$ we obtain
\begin{equation*}
n\|\delta_n(a)\|_{M,N} \le C\|a\|_{M',N'},
\end{equation*}
for some $M',N'$ and constant $C$.  The result now follows easily by induction on $k$.
\end{proof}

\begin{prop}\label{norm_conv}
Let $\delta: HS^\infty(s)\to HS^\infty(s)$ be a continuous derivation, then
\begin{equation}\label{der_series}
\delta(a) = \sum_{n\in\Z} \delta_n(a)
\end{equation}
for every $a\in I_s^\infty$ where $\delta_n$ are the $n$-th Fourier components of $\delta$.  Here the convergence is with respect to the $M,N$-norms.  In particular, this implies norm convergence in $HS(s)$.
\end{prop}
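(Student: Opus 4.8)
The plan is to upgrade the Ces\`aro convergence \eqref{deltacesaro} to genuine convergence of the series $\sum_{n\in\Z}\delta_n(a)$, the essential input being the rapid decay of the components $\delta_n(a)$ furnished by Lemma \ref{est_delta_n}. First I would record that for $a\in I_s^\infty$ each $\delta_n(a)$ again lies in $I_s^\infty$: the circle action $\rho_\theta$ multiplies the Fourier coefficients of an element of $I_s$ only by the unimodular scalars $e^{2\pi ik\theta}$, so it preserves rapid decay and hence $I_s^\infty$; $\delta$ preserves $I_s^\infty$ by Proposition \ref{DerivationsIdeals}; and since $I_s^\infty$ is complete (part (7) of Proposition \ref{mnprop}) the integral defining $\delta_n(a)$ stays in $I_s^\infty$. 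Thus the quantities $\|\delta_n(a)\|_{M,N}$ are honest $M,N$-norms and Lemma \ref{est_delta_n} applies.

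Next, fixing $M$ and $N$ and applying Lemma \ref{est_delta_n} with $k=2$, I obtain $M',N'$ and a constant $C=C_2(M,N)$ with $\|\delta_n(a)\|_{M,N}\le C\|a\|_{M',N'}/n^2$ for all $n\ne 0$; adding the finite $n=0$ term,
\[
\sum_{n\in\Z}\|\delta_n(a)\|_{M,N}\ \le\ \|\delta_0(a)\|_{M,N}+C\|a\|_{M',N'}\sum_{n\ne 0}\frac{1}{n^2}\ <\ \infty .
\]
Since this holds for every $M$ and $N$, the symmetric partial sums $S_L:=\sum_{n=-L}^{L}\delta_n(a)$ are Cauchy in each norm $\|\cdot\|_{M,N}$, so by completeness of $I_s^\infty$ they converge there — in particular in $\|\cdot\|_{0,0}=\|\cdot\|$ — to an element $b:=\sum_{n\in\Z}\delta_n(a)$.

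It then remains to identify $b$ with $\delta(a)$. Since $S_L\to b$ in the Fr\'echet topology of $HS^\infty(s)$, the Ces\`aro means $\frac{1}{L+1}\sum_{j=0}^{L}S_j$ converge to $b$ as well, because Ces\`aro summation is consistent with ordinary convergence (one checks this in each seminorm separately); but \eqref{deltacesaro} says these same Ces\`aro means converge to $\delta(a)$, so $b=\delta(a)$ because the topology is Hausdorff, which is precisely \eqref{der_series}. Finally, convergence in the $M,N$-norms entails norm convergence in $HS(s)$ since $\|\cdot\|=\|\cdot\|_{0,0}$. I do not anticipate a serious obstacle: beyond the standard absolutely-summable-series bookkeeping and the consistency of Ces\`aro means, the one point requiring attention is the very first observation that $\delta_n(a)\in I_s^\infty$, which is what legitimizes both the use of Lemma \ref{est_delta_n} and the completeness of $I_s^\infty$.
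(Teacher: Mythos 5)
Your proof is correct, and while the first half (absolute summability from Lemma \ref{est_delta_n}) follows the paper, your identification of the limit with $\delta(a)$ takes a genuinely different route. The paper argues abstractly that the right-hand side of \eqref{der_series} is a continuous derivation whose Fourier components coincide with those of $\delta$, and then invokes the uniqueness statement in Proposition \ref{ncovariantdeltan} (``$\delta=0$ iff $\delta_n=0$ for all $n$''). This implicitly requires checking that forming the $n$-th Fourier component commutes with the infinite sum on the right, which the paper leaves tacit. You instead invoke the regularity of Ces\`aro summation: once the symmetric partial sums $S_L$ converge to some $b$ in each $\|\cdot\|_{M,N}$, their Ces\`aro means must converge to $b$ as well, and equation \eqref{deltacesaro} says those same Ces\`aro means converge to $\delta(a)$, so $b=\delta(a)$. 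This is more elementary, avoids any interchange-of-limits bookkeeping, and makes the logical dependence on \eqref{deltacesaro} explicit rather than routing through the vanishing criterion for derivations. Your preliminary observation that $\delta_n(a)\in I_s^\infty$ (needed so that the $M,N$-norms of $\delta_n(a)$ are finite quantities over which Lemma \ref{est_delta_n} can be invoked, and so that completeness of $I_s^\infty$ applies) is a useful detail the paper passes over silently; it follows, as you note, because $\rho_\theta$ preserves $I_s^\infty$ isometrically in each $\|\cdot\|_{M,N}$, $\delta$ preserves $I_s^\infty$ by Proposition \ref{DerivationsIdeals}, and the defining integral is a limit of Riemann sums in a complete space.
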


\begin{proof}
Lemma \ref{est_delta_n} implies that $\{\|\delta_n(a)\|_{M,N}\}$ is a RD sequence for every $a\in I_s^\infty$ and thus the series given in equation  \eqref{der_series} is norm convergent. Also, the right-hand side of equation \eqref{der_series} is a continuous derivation on $HS^\infty(s)$ with the same Fourier components $\delta_n$ as $\delta$ so they must be equal.
\end{proof}  

\subsection{Derivations with Range in $I_s^\infty$}
We now turn to study of derivations with range in $I_s^\infty$.
Recall that the Fourier coefficients of $a\in I_s^\infty$
are in the space
\begin{equation*}
C_0^\infty(\Z_{\ge0}\times\Zsstar)=\left\{F\in C_0(\Z_{\ge0}\times\Zsstar):  \{\underset{x}{\textrm{sup }}|F(m,x)|\}_m\textrm{ is RD}\right\}\,.
\end{equation*}
Using the following identification
\begin{equation*}
\{f\in C(\Z_s):f(0)=0\} \cong  C_0(\Z_{\ge0}\times\Zsstar)
\end{equation*}
we can extend the endomorphisms $\alpha$ and $\beta$ to  $C_0^\infty(\Z_{\ge0}\times\Zsstar)$ by
\begin{equation}\label{alphabetatilde}
(\beta F)(m,x)=F(m+1,x)\quad\textrm{and}\quad
 (\alpha F)(m,x)= \left\{
\begin{aligned}
&F(m-1,x) &&\textrm{if }m> 0 \\
&0 &&\textrm{else,}
\end{aligned}\right.
\end{equation}
where we abused notation for readability purposes.

\begin{prop}\label{covariant_inner}
Let $\delta:HS^\infty(s)\to I_s^\infty$ be a continuous $n$-covariant derivation.  Then there exists $R \in C_0^\infty(\Z_{\ge0}\times\Zsstar)$ and $r\in C(\Zsstar)$ such that
\begin{equation*}
\delta(a)=\left\{
\begin{aligned}&[V^n(M_{R}+M_r),a] &\text{ if } n\ge 0\\
&[(M_{R}+M_{r})(V^*)^{-n},a] &\text{ if } n<0&.
\end{aligned}
\right.
\end{equation*}

\end{prop}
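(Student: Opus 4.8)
The plan is to reconstruct $\delta$ from its values on the generators $V$, $V^*$ and the multiplication operators $M_f$, using $n$-covariance to constrain those values and then exhibiting the implementing operator explicitly; I treat $n\ge 0$, the case $n<0$ being symmetric with $V$ and $V^*$ interchanged. First I would unwind $n$-covariance: since $\rho_\theta(V)=e^{2\pi i\theta}V$ and $\rho_\theta(M_f)=M_f$, the covariance identity forces $\delta(V)$ to be $\rho_\theta$-homogeneous of degree $n+1$ and each $\delta(M_f)$ of degree $n$. As $\delta$ has range in $I_s^\infty$, whose elements are graded by the circle action, this means $\delta(V)=V^{n+1}M_G$ for a single $G\in C_0^\infty(\Z_{\ge 0}\times\Zsstar)$ and $\delta(M_f)=V^nM_{L_f}$ for some $L_f\in C_0^\infty(\Z_{\ge 0}\times\Zsstar)$ depending linearly and continuously on $f$, where I use the extension of $\beta$ to $C_0^\infty(\Z_{\ge 0}\times\Zsstar)$ from \eqref{alphabetatilde}.

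Next I would identify $L_f$. Applying $\delta$ to $M_fM_g=M_{fg}$ shows that, for each fixed $(m,l')$, the functional $f\mapsto L_f(m,l')$ obeys a Leibniz rule twisted by the two point-evaluations $f\mapsto f(s^ml')$ and $f\mapsto f(s^{n+m}l')$. Because $\Z_s$ is totally disconnected, $C(\Z_s)$ carries no nonzero continuous point derivation; a short argument with clopen indicators then forces any such twisted functional to be a scalar multiple of $f\mapsto f(s^ml')-f(s^{n+m}l')$, so $L_f(m,l')=c(m,l')\bigl(f(s^ml')-f(s^{n+m}l')\bigr)$ for a function $c$ independent of $f$ (this is vacuous when $n=0$, where $L_f\equiv 0$). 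Feeding this into the relation obtained by applying $\delta$ to $M_fV=VM_{\beta f}$ and comparing with $\delta(V)=V^{n+1}M_G$ yields the first-order difference equation $c(m+1,l')-c(m,l')=G(m,l')$. Since $\{\sup_{l'}|G(m,l')|\}_m$ is rapid decay, the function $R(m,l'):=-\sum_{k\ge m}G(k,l')$ lies in $C_0^\infty(\Z_{\ge 0}\times\Zsstar)$ and solves that equation, and the general solution is $c=R+r$ with $r\in C(\Zsstar)$ the value of $c$ at $m=\infty$ (when $n=0$ I simply take $r=0$).

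Finally I would verify that $W:=V^n(M_R+M_r)$ implements $\delta$. Using $M_RV=VM_{\beta R}$, $M_rV=VM_r$ and $M_fV^n=V^nM_{\beta^n f}$, a direct computation gives $[W,V]=V^{n+1}M_{\beta R-R}=V^{n+1}M_G=\delta(V)$ and $[W,M_f]=V^nM_{((1-\beta^n)f)(R+r)}=V^nM_{L_f}=\delta(M_f)$; although $W\notin HS^\infty(s)$ when $r\ne 0$, a short check using \eqref{MxMphirel}, Proposition \ref{IVIdeal}, and the fact that $I_s^\infty$ is an ideal shows $[W,\cdot]$ still maps $HS^\infty(s)$ into $I_s^\infty$. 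Hence $\tilde\delta:=\delta-[W,\cdot]$ is a continuous $n$-covariant derivation into $I_s^\infty$ that vanishes on $V$ and on every $M_f\in HS^\infty(s)$. From $\tilde\delta(V^*V)=0$ one gets $\tilde\delta(V^*)V=0$, which together with the degree $n-1$ homogeneity of $\tilde\delta(V^*)$ confines it to the $m=0$ level; then $\tilde\delta(VV^*)=\tilde\delta(M_{\alpha(1)})=0$ (as $\alpha(1)$ is locally constant, $M_{\alpha(1)}\in HS^\infty(s)$) forces $\tilde\delta(V^*)=0$. Thus $\tilde\delta$ annihilates $V$, $V^*$ and all $M_f$, hence all Toeplitz operators $T(\phi)$ and all $V^kM_F$, hence $I_s^\infty$ by continuity, hence all of $HS^\infty(s)=\{T(\phi)+c\}$. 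Therefore $\delta=[W,\cdot]=[V^n(M_R+M_r),\cdot]$, as claimed; the case $n<0$ is handled the same way, producing $[(M_R+M_r)(V^*)^{-n},\cdot]$.

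The step I expect to be the main obstacle is the identification of $L_f$ in the second paragraph: combining the total disconnectedness of $\Z_s$ (to kill genuine point derivations), the covariance and Leibniz bookkeeping, and the rapid-decay constraints, so that the coefficient function $c$ is cleanly seen to satisfy the stated difference equation within the smooth category and its decaying part $R$ and boundary value $r$ inherit the regularity required by the statement.
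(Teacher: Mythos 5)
Your proposal is correct and follows essentially the same route as the paper: covariance reduces $\delta$ to its values on $V$, $V^*$, and $M_f$; the absence of nontrivial derivations on the abelian subalgebra (you argue via total disconnectedness and twisted point derivations, the paper via commutativity of $C_0^\infty(\Z_{\ge0}\times\Zsstar)$ together with Sakai's theorem for $n=0$ and a chosen test function $G_0$ for $n>0$) forces $L_f = c\,(f-\beta^n f)$; and the resulting first-order difference equation in $m$ is solved by a tail sum $R\in C_0^\infty(\Z_{\ge0}\times\Zsstar)$ plus a free ``constant'' $r\in C(\Zsstar)$, followed by a generator-by-generator verification. One small imprecision worth flagging: for $n=0$, applying $\delta$ to $M_fV=VM_{\beta f}$ only yields the trivial identity $\beta L_f = L_{\beta f}$ (both sides vanish since $L_f\equiv 0$), so the difference equation $\beta R - R = G$ does not in that case follow from the twisted-Leibniz bookkeeping but must instead be read off directly from the requirement $[M_R,V]=\delta(V)$ --- which is precisely why the paper treats the invariant case separately, using Sakai's theorem to get $\delta(M_f)=0$ before solving the recurrence for $R$.
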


\begin{proof}
We first consider the case when $n=0$ or $\delta$ is invariant. 
 Since $\delta$ is invariant, we must have
\begin{equation*}
\rho_\theta(\delta(M_f))=\delta(\rho_\theta(M_f))=\delta(M_f)
\end{equation*}  
for every $f\in C(\Z_s)$.  Thus $\delta(M_f)$ is $\rho_\theta$-invariant for every $\theta$.
Note the following observation:  
\begin{equation*}
\{a\in HS(s): \forall\theta, \rho_\theta(a)=a \} = C^*(M_g: g\in C(\Z_s)).
\end{equation*}
Consequently, we have
\begin{equation*}
\delta(M_f)\in C^*(M_g:g\in C(\Z_s)).
\end{equation*}
Therefore $\delta$ defines a derivation on $C^*(M_g: g\in C(\Z_s) )$, however $C^*(M_g:g\in C(\Z_s))$ is abelian and there are no non-trivial derivations on abelian C$^*$-algebras, see \cite{S}.  Therefore $\delta(M_f)=0$.

Consider the following calculation:
\begin{equation*}
\begin{aligned}
\rho_\theta(\delta(V)V^*)&= \rho_\theta(\delta(V))\rho_\theta(V^*) = \delta(e^{2\pi i\theta}V)e^{-2\pi i\theta}V^* = \delta(V)V^*. 
\end{aligned}
\end{equation*}
From the above observation, $\delta(V)V^* = M_f$ for some $f\in C(\Z_s)$.  It follows from the commutation relation that 
\begin{equation*}
\delta(V) = M_fV = VM_{\beta f} :=V M_F
\end{equation*} 
with $F\in C(\Z_s)$.  Since $V\in HS^\infty(s)$ we have that $\delta(V)\in I_s^\infty$ and so $F\in C_0^\infty(\Z_{\ge0}\times \Z_s^\times)$. 

By a similar argument, $\delta(V^*)=M_G V^*$ with $G\in C_0^\infty(\Z_{\ge0}\times \Z_s^\times)$. Consider the following calculation:
\begin{equation*}
0= \delta(I) = \delta(V^*V)= \delta(V^*)V + V^*\delta(V) = M_GV^*V + V^*VM_F.
\end{equation*}
It follows that $G=-F$.  By continuity, $\delta$ is determined by $F$.  

We are looking for an $R\in C_0^\infty(\Z_{\ge0}\times \Z_s^\times)$ so that 
\begin{equation*}
\delta(a)=[M_R,a]\,.
\end{equation*}
Computing directly we have
\begin{equation*}
VM_F=\delta(V)=[M_R,V]=M_RV-VM_R=V(M_{\beta R}-M_R)
\end{equation*}
where $\beta$ is defined in equation \eqref{alphabetatilde}.  This yields the equation:
\begin{equation*}
F(m,x)=R(m+1,x)-R(m,x)\,.
\end{equation*}
Solving recursively gets
\begin{equation*}
R(m,x)=-\sum_{j=m+1}^\infty F(j,x)\,.
\end{equation*}
This sum converges since the sequence $\left\{\sup_x |F(j,x)|\right\}_j$ is rapid decay.   For any $N$, we have the following estimate
\begin{equation*}
m^N|R(m,x)| \leq \sum_{j=m}^\infty m^N|F(j,x)| \leq \sum_{j=m}^\infty j^N|F(j,x)|\,.
\end{equation*}
The right hand side of the above inequality goes to zero as $m\to\infty$ since $\{j^N|F(j,x)|\}$ is still a RD sequence.  It follows that $R(m,x)$ is RD in $m$ and hence $\delta$ has the desired form.
%

Next suppose that $n>0$, and since $\delta$ is assumed to be $n$-covariant we have that
\begin{equation*}
\rho_\theta(\delta(a))=e^{2\pi in\theta}\delta(\rho_\theta(a))
\end{equation*}
for all $a\in HS^\infty(s)$.  Argued similarly to the invariant case above we have that
\begin{equation*}
\delta(V)=V^{n+1}M_F\quad\textrm{and}\quad \delta(V^*)=V^{n-1}M_G
\end{equation*}
for some $F,G\in C_0^\infty(\Z_{\ge0}\times \Z_s^*)$. Moreover, we have
\begin{equation*}
0=\delta(V^*V)=V^{n-1}M_GV+V^*V^{n+1}M_F=V^n(M_{\beta G+F})\,.
\end{equation*}
So we have that $F=-\beta G$.  

Given $f\in C^\infty(\R/\Z)$, since $M_f$ is invariant under $\rho_\theta$ it follows that $\delta(M_f)=V^nM_{\tilde{\delta}(f)}$ where $\tilde{\delta}:C^\infty(\R/\Z)\to C^\infty(\R/\Z)$ is a linear map.  Since $a\in HS^\infty(s)$ we can write $a=T(\varphi)+c$ with $\varphi\in C^\infty(\R/\Z)$ and $c\in I_s^\infty$, and so if $M_f\in HS^\infty(s)$, then $M_f=f_0+M_F$ with $f_0\in \C$ and $F\in C_0^\infty(\Z_{\ge0}\times \Zsstar)$.  Since $\delta(f_0)=0$ we only need to study
\begin{equation*}
\delta(M_F)=V^nM_{\tilde{\delta}(F)}
\end{equation*}
with $\tilde{\delta}:C_0^\infty(\Z_{\ge0}\times\Zsstar)\to C_0^\infty(\Z_{\ge0}\times \Zsstar)$.  

From the definition of $\tilde{\delta}$ and the Leibniz rule we have
\begin{equation*}
V^nM_{\tilde{\delta}(F)}M_G + M_FV^nM_{\tilde{\delta}(G)} = \delta(M_FM_G)=\delta(M_{FG})
\end{equation*}
which yields
\begin{equation*}
\delta(M_{FG})=V^n(M_{\tilde{\delta}(F)G +\beta^n(F)\tilde{\delta}(G)})\,.
\end{equation*}
This gives a ``twisted'' derivation $\tilde{\delta}$ with the following product rule:
\begin{equation*}
\tilde{\delta}(FG)= \tilde{\delta}(F)G + \beta^n(F)\tilde{\delta}(G)\,.
\end{equation*}
Since $C_0^\infty(\Z_{\ge0}\times \Zsstar)$ is a commutative algebra, we get
\begin{equation*}
\tilde{\delta}(F)G + \beta^n(F)\tilde{\delta}(G) = \tilde{\delta}(G)F+\beta^n(G)\tilde{\delta}(F)
\end{equation*}
which gives
\begin{equation*}
\tilde{\delta}(F)(G-\beta^n(G))=\tilde{\delta}(G)(F-\beta^n(F))
\end{equation*}
for all $F,G\in C_0^\infty(\Z_{\ge0}\times\Zsstar)$.  Now pick any $G_0$ so that $G_0(m,x)\neq G_0(m+n,x)$ for all $m$ and all $x$, note that this is always possible.  Thus we get
\begin{equation*}
\tilde{\delta}(F) = \frac{\tilde{\delta}(G_0)}{G_0-\beta^n(G_0)}(F-\beta^n (F)):=H(F-\beta^n(F))
\end{equation*}
where $H$ is some complex-valued function defined on $\Z_{\ge0}\times\Zsstar$.  Applying $\delta$ to the relation $VM_FV^*=M_{\alpha F}$ gives
\begin{equation*}
\delta(M_{\alpha F}) = \delta(V)M_FV^*+V\delta(M_F)V^* + VM_F\delta(V^*)\,.
\end{equation*}
Using this and the definition of $\tilde{\delta}$ yields
\begin{equation*}
-V^nM_{\alpha\beta G}M_{\alpha F} + V^nM_{\alpha H(\alpha F-\alpha\beta^n F)} + V^nM_{\beta^{n-1}F}M_G = V^nM_{H(\alpha F-\beta^n\alpha F)}\,.
\end{equation*}
This implies that
\begin{equation*}
(-\alpha\beta(G) +\alpha(H)-H)\alpha(F) + (G+H)\beta^{n-1}(F)-\alpha(H)\alpha\beta^n(F)=0\,.
\end{equation*}
Since $\alpha(H)\alpha\beta^n(F)=\alpha(H)\beta^{n-1}(F)$ for all $x$, we get
\begin{equation*}
(-G+\alpha(H) - H)\alpha(F)=(\alpha(H)-G-H)\alpha\beta^{n-1}(F)
\end{equation*}
for every $F$.
Choose $F$ such that $\alpha(F)$ and $\alpha\beta^{n-1}(F)$ are nonzero, then it follows that 
\begin{equation}\label{GHequation}
G=\alpha(H)-H.
\end{equation}
Explicitly, equation \eqref{GHequation} is the following recurrence equation:
\begin{equation*}
\left\{
\begin{aligned}
&G(k,x)=H(k-1,x) - H(k,x) &&\textrm{ if }k\ge1\\
&G(0,x) = -H(0,x) &&\textrm{ if }k=0\,.
\end{aligned}\right.
\end{equation*}
This system has a formal solution, namely:
\begin{equation*}
H(k,x) = -\sum_{j=0}^kG(j,x) = \sum_{j=k+1}^\infty G(j,x) - \sum_{j=0}^\infty G(j,x):=R + r\,.
\end{equation*}
Since $\delta(V^*)\in I_s^\infty$, we have that $G\in C_0^\infty(\Z_{\ge0}\times\Zsstar)$.  Consequently the above sums converge absolutely and are RD.  Thus $R\in C_0^\infty(\Z_{\ge0}\times\Zsstar)$ and $r\in C(\Z_s^*)$. 

Thus, so far we  have
\begin{equation*}
\delta(V)=-V^{n+1}M_{\beta(G)}\,,\quad\delta(V^*)=V^{n-1}M_{G}\,,\quad\textrm{and}\quad\delta(M_F)=V^nM_{H(F-\beta^n F)}
\end{equation*}
where $G=\alpha(H)-H$.  We wish to show $\delta$ has the following form:
\begin{equation*}
\delta(a) = [V^nM_{H},a]
\end{equation*}
for $n>0$.   A direct calculation verifies the above formula works for $a=V$, $a=V^*$, and $a=M_F$.  
Thus, by continuity, the formula must be true for all $a\in HS^\infty(s)$.
 A similar proof works for when $n<0$.
\end{proof}
Notice that in the above statement a continuous $n$-covariant derivation $\delta:HS^\infty(s)\to I_s^\infty$ is inner if and only if the corresponding $r\in C(\Zsstar)$ is a constant.

 Recall the space
\begin{equation*}
C^\infty(\R/\Z\times\Zsstar) = \left\{\Lambda(\theta,x)=\sum_{n\in\Z}e^{2\pi in\theta}\lambda_n(x) : \lambda_n \in C(\Zsstar) \text{ and } \{\underset{x}{\textrm{sup }}|\lambda_n(x)|\}_n\textrm{ is RD}\right\}\,.
\end{equation*}
Given $\Lambda\in C^\infty(\R/\Z\times\Zsstar)$ recall that we can associate a ``generalized'' Toeplitz operator
\begin{equation*}
\mathcal{T}(\Lambda) = \sum_{n\ge0}V^nM_{\lambda_n} + \sum_{n<0}M_{\lambda_n}(V^*)^{-n}\,.
\end{equation*}

The next proposition classifies all continuous derivations on $HS^\infty(s)$ whose range is in $I_s^\infty$.  Proposition \ref{fancytoeplitz} implies that $[\mathcal{T}(\Lambda),a]$ is in $I_s^\infty$ for any $a\in HS^\infty(s)$.

\begin{prop}\label{d_decomp}
If $\delta:HS^\infty(s)\to I_s^\infty$ is a continuous derivation, then there exists an element $\Lambda\in C^\infty(\R/\Z\times\Zsstar)$ and an element $b\in I_s^\infty$ such that
\begin{equation*}
\delta(a) = [\mathcal{T}(\Lambda) + b, a]\,.
\end{equation*}
Moreover, $\delta$ is inner if and only if $\Lambda$ does not depend on $x$.
\end{prop}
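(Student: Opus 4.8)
The plan is to pass to the Fourier components of $\delta$ with respect to the circle action $\rho_\theta$ and reduce to Proposition \ref{covariant_inner}. By Proposition \ref{ncovariantdeltan} each Fourier component $\delta_n$ is a continuous $n$-covariant derivation, and since the range of $\delta$ lies in $I_s^\infty$ so does that of every $\delta_n$; hence Proposition \ref{covariant_inner} produces $R_n\in C_0^\infty(\Z_{\ge0}\times\Zsstar)$ and $r_n\in C(\Zsstar)$ with
\begin{equation*}
\delta_n(a)=\begin{cases}[V^n(M_{R_n}+M_{r_n}),a],& n\ge 0,\\ [(M_{R_n}+M_{r_n})(V^*)^{-n},a],& n<0.\end{cases}
\end{equation*}
For $n=0$ the operator $M_{r_0}$ lies in the centre of $HS(s)$ and drops out of the commutator; I would simply set $r_0:=0$. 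I would then form the candidates $\Lambda(\theta,x):=\sum_{n\in\Z}e^{2\pi in\theta}r_n(x)$ and $b:=\sum_{n\ge0}V^nM_{R_n}+\sum_{n<0}M_{R_n}(V^*)^{-n}$, and the goal becomes to show $\Lambda\in C^\infty(\R/\Z\times\Zsstar)$, $b\in I_s^\infty$, and $\delta=[\mathcal{T}(\Lambda)+b,\cdot]$.

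Granting for a moment that $\Lambda$ and $b$ are legitimate, the identification of $\delta$ is clean: by the observation preceding the proposition (a consequence of Proposition \ref{fancytoeplitz}) together with the fact that $I_s^\infty$ is an ideal of $HS^\infty(s)$ (Proposition \ref{IVIdeal}), $[\mathcal{T}(\Lambda)+b,\cdot]$ is a continuous derivation $HS^\infty(s)\to I_s^\infty$. Computing $\rho_\theta$ on $\mathcal{T}(\Lambda)+b$ term by term, its $n$-th spectral component is $V^n(M_{r_n}+M_{R_n})$ (respectively $(M_{r_n}+M_{R_n})(V^*)^{-n}$), so the $n$-th Fourier component of $[\mathcal{T}(\Lambda)+b,\cdot]$ is precisely $\delta_n$. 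Since two continuous derivations with identical Fourier components coincide (apply the last part of Proposition \ref{ncovariantdeltan} to their difference), $\delta=[\mathcal{T}(\Lambda)+b,\cdot]$.

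The step I expect to be the main obstacle is proving the required rapid decay, namely that $\{\sup_x|r_n(x)|\}_n$ is RD and $\{\sup_x|R_n(m,x)|\}_{m,n}$ is jointly RD. The subtlety is that one must \emph{not} test $\delta$ on compactly supported elements of $I_s^\infty$ such as the matrix units $P_{ij}$: their $\|\cdot\|_{M,N}$-norms grow polynomially in $m$, which would swamp the estimate. Instead one should test on the generators $V$ and $V^*$, which have finite $HS^\infty(s)$-seminorms and directly carry the data. The argument behind Lemma \ref{est_delta_n} uses only continuity of $\delta$, of $\rho_\theta$, and of $[\dpp,\delta]$ on $HS^\infty(s)$, so it gives, for every $k,N$, a constant with $n^k\|\delta_n(a)\|_{0,N}\le C_k(N)\|a\|_{M',N'}$ for all $a\in HS^\infty(s)$. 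Recalling from the proof of Proposition \ref{covariant_inner} that $\delta_n(V^*)$ directly exhibits the difference datum $G_n$ (for $n\ge1$, $\delta_n(V^*)=V^{n-1}M_{G_n}$; the cases $n=0$ and $n<0$, via $\delta_n(V)$, are analogous), with $R_n(m,\cdot)=-\sum_{j\ge m}G_n(j,\cdot)$ and $r_n=-\sum_{j\ge0}G_n(j,\cdot)$, we have $\|\delta_n(V^*)\|_{0,N}=\sup_m(1+m)^N\sup_x|G_n(m,x)|$. Applying the estimate to $a=V^*$ shows this quantity is RD in $n$ for every $N$; hence $\{\sup_x|G_n(m,x)|\}_{m,n}$ is jointly RD, and passing to tails gives the claims for $R_n$ and $r_n$.

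Finally, for the dichotomy: if $\Lambda$ does not depend on $x$ then $\mathcal{T}(\Lambda)=T(\Lambda)$ with $\Lambda\in C^\infty(\R/\Z)$, so $\mathcal{T}(\Lambda)+b\in HS^\infty(s)$ and $\delta$ is inner. Conversely, suppose $\delta=[c,\cdot]$ with $c\in HS^\infty(s)$. Working inside $C(\Zsstar)\otimes\mathcal{T}$ (via $HS(s)\cong\sigma^{-1}(C(\R/\Z))$), the element $d:=c-\mathcal{T}(\Lambda)-b$ commutes with $HS^\infty(s)$, hence with $HS(s)$ by density, hence with $I_s\cong C(\Zsstar)\otimes\mathcal{K}$ (Theorem \ref{ideal isomorphism}); since $\mathcal{K}'\cap\mathcal{T}=\C I$, the relative commutant of $C(\Zsstar)\otimes\mathcal{K}$ in $C(\Zsstar)\otimes\mathcal{T}$ is $C(\Zsstar)$, so $d=M_\mu$ for some $\mu\in C(\Zsstar)$. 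Applying the symbol map $\sigma$ and using $\sigma(c)=q(c)\in C(\R/\Z)$, $\sigma(b)=0$, $\sigma(\mathcal{T}(\Lambda))=\Lambda$, $\sigma(M_\mu)=\mu$, one gets that $\Lambda(\theta,x)+\mu(x)=q(c)(\theta)$ is independent of $x$; comparing $\theta$-Fourier modes, $\lambda_n$ is constant for $n\ne0$ while $\lambda_0(x)+\mu(x)$ is constant, and since $\lambda_0=r_0=0$ the function $\mu$ is constant. Therefore $\Lambda$ does not depend on $x$, as claimed. The routine points — the behaviour on the basis vector $E_0$ and the precise $n\le0$ analogues of the formulas from Proposition \ref{covariant_inner} — are dispatched exactly as in the cited results.
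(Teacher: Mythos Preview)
Your argument is correct and follows the same overall strategy as the paper: decompose into Fourier components $\delta_n$, apply Proposition~\ref{covariant_inner} to each, extract the RD properties of $G_n$ by evaluating on $V^*$, then assemble $\Lambda$ and $b$ and match Fourier components. Two tactical differences are worth noting. For the RD step the paper does not invoke (an extension of) Lemma~\ref{est_delta_n}; instead it simply observes that $\delta(V^*)\in I_s^\infty$ and that $\delta_n(V^*)$ is a spectral component of this fixed element, so the uniform bound $\|\dpp^j(\delta_n(V^*))(1+\PP)^N\|\le\textup{const}(N,j)$ follows directly, yielding $(n-1)^j\sup_{m,x}(1+m)^N|G_n(m,x)|\le\textup{const}(N,j)$ by explicit computation---no integration by parts through $[\dpp,\delta]$ is needed. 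For the ``inner iff'' direction the paper is terse (it just says $\mathcal T(\Lambda)$ is a genuine Toeplitz operator iff $\Lambda$ is $x$-independent), whereas your relative-commutant argument inside $C(\Zsstar)\otimes\mathcal T$ via the identification $HS(s)\cong\sigma^{-1}(C(\R/\Z))$ is more explicit and actually supplies the missing justification for the converse.
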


\begin{proof}
First write $\delta$ as its Fourier decomposition: 
\begin{equation*}
\delta(a)=\sum_{n\in\Z} \delta_n(a)
\end{equation*}
By Proposition \ref{norm_conv}, this sum is convergent with respect to the $M,N$-norms.  Further, Proposition \ref{ncovariantdeltan} implies that $\delta_n$ is $n$-covariant. 
By Proposition \ref{covariant_inner}, $\delta_n$ may be represented by a special commutator.  We only consider the case $n\ge0$ as the case $n<0$ is similar.

For $n\geq 0$, we get 
\begin{equation*}
\delta_n(a)=[V^n(M_{r_n}+M_{R_n}),a]
\end{equation*}
where $r_n\in C(\Zsstar)$  and  $R_n\in C^\infty_0(\Z_{\ge0}\times\Zsstar)$ for each $n$.  

Recall from the proof of Proposition \ref{covariant_inner},
\begin{equation*}\delta_n(V^*)=V^{n-1}M_{G_n}\end{equation*}
where $G_n\in C^\infty_0(\Z_{\ge0} \times \Zsstar)$ and has the following relation with $R_n$ and $r_n$:
\begin{equation*}
\sum_{j=k+1}^\infty G_n(j,x) - \sum_{j=0}^\infty G_n(j,x)=R_n(k,x) + r_n(x)\,.
\end{equation*}
The goal is to establish rapid decay properties of $R_n$ and $r_n$.

Since $\delta(V^*)\in I_s^\infty$, we have that for all $N$ and $j$ there exists a constant such that 
\begin{equation*}
\|\dpp^j(\delta(V^*))(1+\PP)^N\| \leq const(N,j).
\end{equation*}
However by the definition of $\delta_n$ we get
\begin{equation*}
\delta_n(V^*) = \int_0^1 e^{-2\pi i n\theta} \rho_\theta\delta\rho_\theta^{-1}(V^*)\,d\theta = \int_0^1 e^{-2\pi i(n+1)\theta}\rho_\theta \delta(V^*)\,d\theta.
\end{equation*}
Note that  
\begin{equation*}
\begin{aligned}
\|\dpp^j(\delta_n(V^*))(1+\PP)^N\| &= \left\|\int_0^1 e^{-2\pi i(n+1)\theta}\rho_\theta\dpp^j(\delta(V^*))(1+\PP)^N\,d\theta\right\|\leq const(N,j).
\end{aligned}\end{equation*}
On the other hand, computing the norm explicitly in the above inequality we have
\begin{equation*}
\begin{aligned} 
\|\dpp^j(V^{n-1}M_{G_n})(1+\PP)^N\|=(n-1)^j \sup_{m,x}|G_n(m,x)|(1+m)^N\leq const(N,j).
\end{aligned}\end{equation*}
So $\left\{\sup_x |G_n(m,x)|\right\}_{m,n}$ is RD.  It follows that $\{\sup_x |R_n(m,x)|\}_{m,n}$ is an RD sequence and $\{\sup_x|r_n(x)|\}_n$ is an RD sequence.  A similar argument shows the same RD results for $n<0$.

Define $\Lambda$ by 
\begin{equation*}
\Lambda(\theta,x) := \sum_{n\in \Z} e^{2\pi in\theta}r_n(x).
\end{equation*}
The above argument shows that $\Lambda\in C^\infty(\R/\Z\times \Zsstar)$.  Also define $b$ by
\begin{equation*}
b:=\sum_{n\ge0} V^n M_{R_n} + \sum_{n<0}M_{R_n}(V^*)^{-n}.
\end{equation*}
The above argument also shows that $b\in I_s^\infty$.  

We now claim that
\begin{equation*}
\delta(a)=[\mathcal{T}(\Lambda)+b,a]
\end{equation*}
for all $a\in HS^\infty(s)$. Indeed, since $\Lambda\in C^\infty(\R/\Z\times\Zsstar)$ and $b\in I_s^\infty$ both sides of the equation are continuous derivations $HS^\infty(s)\to I_s^\infty$ with the same Fourier components and thus must be equal.

Notice again that $\mathcal{T}(\Lambda)$ is a Toeplitz operator if and only if $\Lambda$ does not depend on $x$, thus $\delta$ is inner if and only if $\Lambda$ does not depend on $x$.

%
\end{proof}

\subsection{Classification of Derivations}
For $\Lambda\in C^\infty(\R/\Z\times\Zsstar)$ define the following map
\begin{equation}\label{defnfrakd}
a\mapsto \mathfrak{d}_\Lambda(a) := [\mathcal{T}(\Lambda),a]
\end{equation}
and notice by Proposition \ref{fancytoeplitz} that $\mathfrak{d}_\Lambda$ is a continuous derivation from $HS^\infty(s)$ to $I_s^\infty$, appearing in Proposition \ref{d_decomp}.
\begin{prop}\label{frakdprop}
Let $\Lambda\in C^\infty(\R/\Z\times \Zsstar)$ and let $\mathfrak{d}_\Lambda:HS^\infty(s)\to I_s^\infty$ be as above.  Then we have the following properties.
\begin{enumerate}
\item $\mathfrak{d}_\Lambda$ is inner if and only if $\Lambda$ does not depend on $x$.
\item $\mathfrak{d}_\Lambda=0$ if and only if $\Lambda$ does not depend on $\theta$.
\end{enumerate}
\end{prop}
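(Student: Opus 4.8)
The plan is to dispose of the two ``if'' implications directly from results already proved, and then to derive both ``only if'' implications from one mechanism: a description of the relative commutant of $HS(s)$ in $B(H)$, combined with a matrix-coefficient computation against the canonical basis $\{E_l\}$.

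First I would record the easy directions. If $\Lambda$ does not depend on $\theta$, only the $n=0$ term of $\mathcal{T}(\Lambda)$ survives, so $\mathcal{T}(\Lambda)=M_{\lambda_0}$; being diagonal, $M_{\lambda_0}$ commutes with every $M_f$, $f\in C(\Z_s)$, and by \eqref{MxMphirel} and its adjoint it commutes with $V$ and $V^*$, hence with all of $HS(s)\supseteq HS^\infty(s)$, and $\mathfrak{d}_\Lambda=0$. If $\Lambda$ does not depend on $x$, then $\mathcal{T}(\Lambda)$ is the ordinary Toeplitz operator $T(\Lambda)$ (see the discussion preceding Proposition \ref{fancytoeplitz}), which lies in $HS^\infty(s)$, so $\mathfrak{d}_\Lambda(a)=[T(\Lambda),a]$ is inner by definition.

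For the converses I would first establish that any $c\in B(H)$ commuting with all of $HS(s)$ is diagonal in $\{E_l\}$: by Urysohn's lemma $C(\Z_s)$ separates the integer points of $\Z\subseteq\Z_s$, so the $M_f$ have the lines $\C E_l$ as their joint eigenspaces and any operator commuting with all of them preserves each of these lines. I also need the bookkeeping identity that, for a unit $l'\in\Z$ and $m$ sufficiently large (namely $m\ge\max(0,-n)$), the only summand of $\mathcal{T}(\Lambda)=\sum_{n\ge0}V^nM_{\lambda_n}+\sum_{n<0}M_{\lambda_n}(V^*)^{-n}$ producing an $E_{s^{m+n}l'}$-component in $\mathcal{T}(\Lambda)E_{s^ml'}$ is the $n$-th one, so that $\langle E_{s^{m+n}l'},\mathcal{T}(\Lambda)E_{s^ml'}\rangle=\lambda_n(l')$; likewise the $E_{s^{m+n}l'}$-component of $T(\phi)E_{s^ml'}$ equals $\phi_n$, and that of $bE_{s^ml'}$, for $b\in I_s^\infty$ with Fourier coefficients $F_k\in C_0^\infty(\Z_{\ge0}\times\Zsstar)$, is a value of $F_n$ at an argument tending to $\infty$ with $m$. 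Given these, direction (2)~$\Rightarrow$ is immediate: $\mathfrak{d}_\Lambda=0$ makes $\mathcal{T}(\Lambda)$ commute with $HS^\infty(s)$, hence with $HS(s)$ by boundedness and density, hence diagonal; the off-diagonal value $\lambda_n(l')$ must therefore vanish for every $n\ne0$ and every unit $l'$, and by continuity $\lambda_n\equiv0$, i.e.\ $\Lambda$ does not depend on $\theta$.

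For (1)~$\Rightarrow$, suppose $\mathfrak{d}_\Lambda=[y,\cdot]$ with $y=T(\phi)+b\in HS^\infty(s)$, $\phi\in C^\infty(\R/\Z)$, $b\in I_s^\infty$. Then $c:=\mathcal{T}(\Lambda)-y$ commutes with $HS(s)$, hence is diagonal, and for each $n\ne0$ the $E_{s^{m+n}l'}$-components of $\mathcal{T}(\Lambda)E_{s^ml'}$ and of $yE_{s^ml'}$ coincide, giving $\lambda_n(l')=\phi_n+(\text{a value of }F_n)$ for all large $m$; letting $m\to\infty$ and using the rapid decay in $m$ of the Fourier coefficients of $b$ kills the last term, so $\lambda_n(l')=\phi_n$ is a constant, and since this holds for every $n\ne0$, $\Lambda$ is independent of $x$ (up to its $\mathfrak{d}_\Lambda$-irrelevant $n=0$ part, whose operator $M_{\lambda_0}$ is central). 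The routine ingredients here are the two ``if'' directions and the matrix-coefficient computation; the point requiring care is the meaning of ``inner'', since $\mathcal{T}(\Lambda)$ itself need not belong to $HS(s)$, so one must pass to $c=\mathcal{T}(\Lambda)-y$ rather than argue about $\mathcal{T}(\Lambda)$ directly. The genuinely substantive step, which I expect to be the main obstacle, is the limit $m\to\infty$ in (1)~$\Rightarrow$: it is exactly the rapid decay in $m$ of the Fourier coefficients of elements of $I_s^\infty$ that forces the obstruction $\lambda_n$ to be an $x$-independent constant, and this is precisely what separates inner derivations from those merely having range in $I_s^\infty$.
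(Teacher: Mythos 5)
Your proof is correct and takes a genuinely different route from the paper. The paper's proof of item (2) applies $\mathfrak{d}_\Lambda$ directly to the generators $V$ and $V^*$ and reads off $\lambda_n = 0$ for $n\neq 0$ from the resulting cancellations; item (1) it simply cites from the remark closing Proposition \ref{d_decomp}. You instead funnel both ``only if'' directions through one structural lemma --- any bounded operator commuting with $HS(s)$ must be diagonal in $\{E_l\}$, because $C(\Z_s)$ separates the points of the dense subset $\Z \subseteq \Z_s$ --- and then extract $\lambda_n(l')$ as an off-diagonal matrix coefficient $\langle E_{s^{m+n}l'}, \mathcal{T}(\Lambda)E_{s^ml'}\rangle$ for $m \ge \max(0,-n)$. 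For (2)$\Rightarrow$ this coefficient must vanish outright; for (1)$\Rightarrow$ it must agree with the corresponding coefficient of $y = T(\phi) + b$, namely $\phi_n + F_n(\cdot,l')$, and letting $m \to \infty$ (using that the Fourier coefficients $F_n$ of $b \in I_s^\infty$ lie in $C_0$) kills the $b$-contribution, giving $\lambda_n(l') = \phi_n$ on the dense set $\Z \cap \Zsstar$, hence everywhere. This is a clean unification that the paper doesn't offer, and it buys a sharper diagnosis: you correctly flag that the argument only forces $\lambda_n$ to be $x$-independent for $n \neq 0$, since $M_{\lambda_0}$ is central and therefore $\mathfrak{d}_\Lambda$-irrelevant. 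Indeed, as stated the paper's ``only if'' in item (1) is slightly too strong --- a $\theta$-independent but $x$-dependent $\Lambda$ gives $\mathfrak{d}_\Lambda = 0$, which is inner --- and requires a normalization such as $E_2(\Lambda) = 0$, which the paper only imposes later in Proposition \ref{bothe1e2zero}. Your proposal proves the substantive content and makes the needed caveat explicit.
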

\begin{proof}
The first item has already been established, thus we only need to prove the second.  Consider $\mathfrak{d}_\Lambda$ acting on $V$:
\begin{equation*}\begin{aligned}
0 = \mathfrak{d}_\Lambda(V) &= \left[\sum_{n\ge0} V^n M_{\lambda_n} + \sum_{n< 0} M_{\lambda_n}(V^*)^{-n},V\right] \\
&= \left[\sum_{n\ge0} V^nM_{\lambda_n},V\right] + \left[\sum_{n< 0} M_{\lambda_n}(V^*)^{-n},V\right]
\end{aligned}
\end{equation*}
and notice that the first commutator is 0.  The second commutator is the sum of projections $P_{<-n}$ with coefficients $M_{\lambda_n}$.  Since the sum must be 0 and the projections $P_{<-n}$ are not zero, $M_{\lambda_n}$ must be $0$ for $n<0$.  

Similarly, applying $\mathfrak{d}_\Lambda$ to $V^*$ we have $M_{\lambda_n}=0$ for $n>0$.  So $\Lambda(\theta,x)=\Lambda(0,x)$ for all $x$, and thus $\Lambda$ does not depend on $\theta$.
\end{proof}

To obtain uniqueness in the classification of derivations we need to make further choices of $\Lambda$'s.
One convenient way to do it is to consider the following expectations in $C^\infty(\R/\Z\times \Zsstar)$:  $E_1:C^\infty(\R/\Z\times \Zsstar)\to C^\infty(\R/\Z)$ and $E_2:C^\infty(\R/\Z\times \Zsstar)\to C(\Zsstar)$ given by 
\begin{equation}
E_1(\Lambda) = \int_{\Zsstar} \Lambda(\theta,x)\,dx \text{\quad and\quad } E_2(\Lambda)=\int_0^1 \Lambda(\theta,x)\,d\theta.
\end{equation}

\begin{prop}\label{bothe1e2zero}
Let $\Lambda\in C^\infty(\R/\Z\times \Zsstar)$.  Then we have the following:
\begin{enumerate}
\item There is a $\Lambda'\in C^\infty(\R/\Z\times \Zsstar)$ so that $E_2(\Lambda')=0$ and $\mathfrak{d}_\Lambda=\mathfrak{d}_{\Lambda'}$.
\item If $E_2(\Lambda)=0$ then there is a $\Lambda'\in C^\infty(\R/\Z\times \Zsstar)$ so that $E_1(\Lambda')=E_2(\Lambda')=0$ and $\mathfrak{d}_\Lambda=\mathfrak{d}_{\Lambda'}+\tilde{\delta}$ where  $\tilde{\delta}$ is an inner derivation on $HS^\infty(s)$.
\item If $E_1(\Lambda)=E_2(\Lambda)=0$ then $\mathfrak{d}_\Lambda$ is inner if and only if $\Lambda=0$.
\end{enumerate}
\end{prop}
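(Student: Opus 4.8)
The plan is to treat the three parts in order, using only Proposition \ref{frakdprop} together with the elementary properties of the expectations $E_1$ and $E_2$: namely that each is an idempotent that fixes its target subalgebra, that $E_1$ and $E_2$ commute, and that both send $C^\infty(\R/\Z\times\Zsstar)$ back into $C^\infty(\R/\Z\times\Zsstar)$. Since $\Lambda\mapsto\mathfrak{d}_\Lambda$ is linear, each part will come down to subtracting a correction term from $\Lambda$ and checking its effect on $\mathfrak{d}_\Lambda$, on $E_1(\Lambda)$, and on $E_2(\Lambda)$.

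For part (1), I would write $\Lambda=\sum_n e^{2\pi in\theta}\lambda_n(x)$ so that $E_2(\Lambda)=\lambda_0\in C(\Zsstar)$ is $\theta$-independent, hence lies in $C^\infty(\R/\Z\times\Zsstar)$, and set $\Lambda'=\Lambda-E_2(\Lambda)$. By Proposition \ref{frakdprop}(2) the $\theta$-independent term contributes nothing, $\mathfrak{d}_{E_2(\Lambda)}=0$, so $\mathfrak{d}_{\Lambda'}=\mathfrak{d}_\Lambda$; and since $E_2$ fixes $\theta$-independent functions, $E_2(\Lambda')=E_2(\Lambda)-E_2(\Lambda)=0$. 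Part (2) is parallel: now $E_2(\Lambda)=0$, and $E_1(\Lambda)(\theta)=\sum_n e^{2\pi in\theta}\int_{\Zsstar}\lambda_n(x)\,dx$ has rapid-decay coefficients, so $E_1(\Lambda)\in C^\infty(\R/\Z)\subseteq C^\infty(\R/\Z\times\Zsstar)$ and is $x$-independent; set $\Lambda'=\Lambda-E_1(\Lambda)$. Proposition \ref{frakdprop}(1) says $\tilde\delta:=\mathfrak{d}_{E_1(\Lambda)}$ is inner, and $\mathfrak{d}_\Lambda=\mathfrak{d}_{\Lambda'}+\tilde\delta$. Since $E_1$ fixes $x$-independent functions, $E_1(\Lambda')=0$, and since $E_2(E_1(\Lambda))=E_1(E_2(\Lambda))=0$ (Fubini, legitimate because the Fourier series of $\Lambda$ converges absolutely), $E_2(\Lambda')=E_2(\Lambda)-E_2(E_1(\Lambda))=0$.

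For part (3), one implication is immediate from Proposition \ref{frakdprop}(1): if $\Lambda=0$ then it is in particular $x$-independent, so $\mathfrak{d}_\Lambda=0$ is inner. For the converse, assuming $E_1(\Lambda)=E_2(\Lambda)=0$ and $\mathfrak{d}_\Lambda$ inner, Proposition \ref{frakdprop}(1) forces $\Lambda$ to be $x$-independent, say $\Lambda(\theta,x)=h(\theta)$ with $h\in C^\infty(\R/\Z)$; then $0=E_1(\Lambda)=h$, hence $\Lambda=0$.

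I do not expect a genuine obstacle here; the only thing requiring attention is the bookkeeping on the two expectations — confirming they map the smooth algebra into itself, that they are idempotent, that they commute, and that the term removed at each stage ($\theta$-independent in part (1), $x$-independent in part (2)) is of exactly the type to which the corresponding clause of Proposition \ref{frakdprop} applies. Once that matching is set up, everything is a one-line computation.
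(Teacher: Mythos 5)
Your proposal is correct and follows essentially the same argument as the paper: subtract $E_2(\Lambda)$ in part (1) and $E_1(\Lambda)$ in part (2), invoke the two clauses of Proposition \ref{frakdprop} to identify the discarded term's effect, and use Fubini plus idempotence of the expectations for the bookkeeping, with part (3) falling out directly from Proposition \ref{frakdprop}(1) and $E_1(\Lambda)=0$. The only difference is cosmetic — you make explicit that $E_2(\Lambda)=\lambda_0$ and that $E_1(\Lambda)$ retains rapid-decay coefficients — which the paper leaves implicit.
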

\begin{proof}
For the first item, note that $E_2(\Lambda)$ does not depend on $\theta$.  Define $\Lambda' = \Lambda-E_2(\Lambda)$; then \begin{equation*}
\mathfrak{d}_\Lambda=\mathfrak{d}_{\Lambda-E_2(\Lambda)}.
\end{equation*} 
Moreover, since $E_2$ is an expectation we have 
\begin{equation*}
E_2(\Lambda')=E_2(\Lambda-E_2(\Lambda))=0.
\end{equation*}

For the second item, put $\Lambda'=\Lambda-E_1(\Lambda)$.  Since $E_1$ and $E_2$ are expectations, we have
\begin{equation*}
E_1(\Lambda') = E_1(\Lambda - E_1(\Lambda)) = 0
\end{equation*}
 and 
\begin{equation*}
E_2(\Lambda')=E_2(\Lambda)-E_2(E_1(\Lambda)) = 0-E_1(E_2(\Lambda))=0
\end{equation*}
where Fubini's Theorem is used to interchange the order of the expectations.

Thus we have that
\begin{equation*}
\mathfrak{d}_{\Lambda'} = \mathfrak{d}_\Lambda -\mathfrak{d}_{E_1(\Lambda)}.
\end{equation*}  
Note that $\mathfrak{d}_{E_1(\Lambda)}$ does not depend on $x$ and hence is inner, proving part 2.  Therefore we may always choose $\Lambda$ such that $E_1(\Lambda)=E_2(\Lambda)=0$.

Now suppose $E_1(\Lambda)=E_2(\Lambda)=0$ and $\mathfrak{d}_\Lambda$ is inner.  Since $\mathfrak{d}_\Lambda$ is inner, $\Lambda$ does not depend on $x$.  Thus we have that
\begin{equation*}
0 = E_1(\Lambda) = \Lambda.
\end{equation*}
So $\Lambda=0$.  It is clear that if $\Lambda=0$ then $\mathfrak{d}_\Lambda$ is inner.
\end{proof}
%

The following theorem describes a decomposition of any continuous derivation on $HS^\infty(s)$ into special derivations and is the main result of this section.

\begin{theo}\label{der_theo}
If $\delta:HS^\infty(s)\to HS^\infty(s)$ is a continuous derivation, then there exist unique  $\varphi\in C^\infty(\R/\Z)$ and $\Lambda\in C^\infty(\R/\Z \times\Zsstar)$ with $E_1(\Lambda)=E_2(\Lambda)=0$ such that
\begin{equation*}
\delta = \delta_\varphi + \mathfrak{d}_\Lambda +  \tilde{\delta}\,,
\end{equation*}
where $\delta_\varphi$ was defined in Proposition \ref{special_der_delta_F}, $\mathfrak{d}_\Lambda$ defined in equation \eqref{defnfrakd}, 
and $\tilde{\delta}$ is an inner derivation on $HS^\infty(s)$.
\end{theo}

\begin{proof}
Since $\delta$ is a continuous derivation, equation \eqref{classofdelta} defines a $\varphi\in C^\infty(\R/\Z)$.  Consider the following calculation:
\begin{equation*}
q(\delta(a)-\delta_\varphi(a)) = \frac{1}{2\pi i} \varphi(\theta)\frac{d}{d\theta}q(a) - \frac{1}{2\pi i} \varphi(\theta)\frac{d}{d\theta}q(a) = 0
\end{equation*}
for all $a\in HS^\infty(s)$.  Thus the difference $\delta-\delta_\varphi$ is a continuous derivation from $HS^\infty(s)$ to $I_s^\infty$. Observe that the range of $\delta_\varphi$ is contained in $I_s^\infty$ if and only if $\varphi=0$.  This gives a unique $\varphi$ in this decomposition.  By Proposition \ref{d_decomp} there exists a $\Psi\in C^\infty(\R/\Z\times \Zsstar)$ and $b\in I_s^\infty$ such that 
\begin{equation*}
(\delta-\delta_\varphi)(a) = \mathfrak{d}_\Psi(a) + [b,a]
\end{equation*}
for any $a\in HS^\infty(s)$.  By Proposition \ref{bothe1e2zero} choose $\Psi$ so that $E_2(\Psi)=0$.  Define $\Lambda$ by $\Lambda = \Psi-E_1(\Psi)$ and again by Proposition \ref{bothe1e2zero}, we have that $E_1(\Lambda)=E_2(\Lambda)=0$.  Moreover, we have 
\begin{equation*}
\delta(a) = \delta_\varphi(a) + \mathfrak{d}_\Lambda(a) + [T(E_2(\Psi))+b,a]
\end{equation*}
for every $a\in HS^\infty(s)$.  The uniqueness of $\Lambda$ follows from Proposition \ref{bothe1e2zero}. This completes the proof.  

\end{proof}

\section{K-Theory}
In this section, we compute the $K$-Theory of $HS(s)$. In particular, we investigate the six term exact sequence in $K$-Theory induced by the short exact sequence 
\begin{equation*}
0 \to I_s \to  HS(s) \to C(\R / \Z) \to 0.
\end{equation*}
These results are summarized in the following proposition. 
\begin{prop}
The $K$-Theory of $HS(s)$ is given by 
\begin{equation*}
K_0(HS(s)) \cong C(\Zsstar , \Z) \quad \text{and} \quad K_1(HS(s)) \cong 0. 
\end{equation*}
\end{prop}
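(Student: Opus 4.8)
The plan is to exploit the short exact sequence
\[
0 \to I_s \to HS(s) \xrightarrow{q} C(\R/\Z) \to 0
\]
together with the identification $I_s \cong C(\Zsstar)\otimes\mathcal{K}$ established in Theorem \ref{ideal isomorphism}. First I would record the input $K$-groups. By stability of $K$-theory, $K_0(I_s) \cong K_0(C(\Zsstar)) \cong C(\Zsstar,\Z)$, the group of continuous integer-valued functions on the Cantor set $\Zsstar$, and $K_1(I_s)\cong K_1(C(\Zsstar)) \cong 0$ since $\Zsstar$ is totally disconnected. On the quotient side, $K_0(C(\R/\Z))\cong\Z$ and $K_1(C(\R/\Z))\cong\Z$, with the $K_1$ generator being the class of the identity function $z$ (equivalently, the unitary $v$ of the circle algebra $C^*(v,m_f)$ realizing the quotient via $\pi_0$).

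Next I would write down the six-term exact sequence
\begin{equation*}
\begin{CD}
K_0(I_s) @>>> K_0(HS(s)) @>{q_*}>> K_0(C(\R/\Z)) \\
@AA{\partial_1}A @. @VV{\partial_0}V \\
K_1(C(\R/\Z)) @<{q_*}<< K_1(HS(s)) @<<< K_1(I_s)
\end{CD}
\end{equation*}
which, after substituting the groups above, becomes
\[
C(\Zsstar,\Z) \to K_0(HS(s)) \xrightarrow{q_*} \Z \xrightarrow{\partial_0} 0 \xrightarrow{} K_1(HS(s)) \xrightarrow{q_*} \Z \xrightarrow{\partial_1} C(\Zsstar,\Z).
\]
Since $K_1(I_s)=0$, the index map $\partial_0$ is automatically zero, so $q_*:K_0(HS(s))\to\Z$ is surjective. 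The whole computation then reduces to determining the exponential map $\partial_1:\Z \to C(\Zsstar,\Z)$, i.e. computing $\partial_1[z]$ where $[z]$ generates $K_1(C(\R/\Z))$.

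The main obstacle is exactly this computation of $\partial_1$. I expect it to be injective with image the subgroup of \emph{constant} functions $\Z\hookrightarrow C(\Zsstar,\Z)$; this is what forces $K_1(HS(s))=0$ (as $\partial_1$ injective kills $K_1(HS(s))$ via exactness) and yields $K_0(HS(s))\cong C(\Zsstar,\Z)/\Z \oplus \Z$ — which I must then reconcile with the claimed answer $C(\Zsstar,\Z)$. The cleanest route is to lift the unitary $z\in C(\R/\Z)$ to the isometry $V\in HS(s)$ with $V^*V=1$ and $1-VV^*$ equal to the projection onto $E_0$ (a rank-one projection, representing $I_s$-theoretically the \emph{constant function $1$} under $I_s\cong C(\Zsstar)\otimes\mathcal K$). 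By the standard formula for the exponential map applied to a partial isometry lift, $\partial_1[z] = -[1-VV^*] + [1-V^*V] = -[1-VV^*]$, which is the class of the constant function $-1$ (or $1$, up to sign convention) in $C(\Zsstar,\Z) = K_0(I_s)$. Thus $\partial_1$ has image $\Z\cdot 1$, is injective, so $K_1(HS(s))=\ker\partial_1 = 0$ by exactness at the lower-left, and $K_0(HS(s)) \cong C(\Zsstar,\Z)/(\Z\cdot 1) \oplus \Z$ by the splitting coming from surjectivity of $q_*$ onto $\Z$ (split by $[1]\mapsto[1_{HS(s)}]$). Finally I would observe that $C(\Zsstar,\Z)/(\Z\cdot 1)\oplus\Z \cong C(\Zsstar,\Z)$: the group $C(\Zsstar,\Z)$ of locally constant $\Z$-valued functions on a Cantor set is a free abelian group of countable rank, and so is $C(\Zsstar,\Z)/(\Z\cdot 1)\oplus\Z$ — both are countably-generated torsion-free, and a short diagram chase (or direct construction of a splitting of $0\to\Z\to C(\Zsstar,\Z)\to C(\Zsstar,\Z)/\Z\to 0$, which exists since the quotient is free) identifies them. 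Alternatively — and perhaps more transparently — I would bypass the reconciliation by using the full-corner picture \eqref{corneriso}, $HS(s) \cong 1_{\Z_s}(C_0(\Q_s)\rtimes_{\tilde\alpha}\Z)1_{\Z_s}$, compute the $K$-theory of the crossed product $C_0(\Q_s)\rtimes_{\tilde\alpha}\Z$ via Pimsner–Voiculescu (noting $K_*(C_0(\Q_s))$ and that $\tilde\alpha_*$ acts trivially on $K_0$ since $\Q_s$ is totally disconnected and $\tilde\alpha$ is homotopic to the identity in the relevant sense), and transport the answer through Morita equivalence of full corners. Either way, the crux is the single connecting-map computation, and I would present the first (six-term) route in detail since it makes the generators explicit.
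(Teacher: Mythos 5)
Your strategy is the paper's: same short exact sequence, same computation $K_0(I_s)\cong C(\Zsstar,\Z)$ and $K_1(I_s)=0$, same connecting-map calculation $\partial[z]=-[1-VV^*]$ landing on the constant function $-1$, same conclusion $K_1(HS(s))=0$, and same split right-hand sequence giving $K_0(HS(s))\cong C(\Zsstar,\Z)/\Z\oplus\Z$. (A minor slip: the map $K_1(C(\R/\Z))\to K_0(I_s)$ you compute is the \emph{index} map, not the exponential map, as the partial-isometry formula you invoke shows; the arithmetic is right.)

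The one genuine gap is in the final reconciliation $C(\Zsstar,\Z)/\Z\oplus\Z\cong C(\Zsstar,\Z)$. Observing that both sides are ``countably-generated torsion-free'' proves nothing ($\Z$ and $\Z^2$ are too). Your fallback --- that $0\to\Z\to C(\Zsstar,\Z)\to C(\Zsstar,\Z)/\Z\to 0$ splits ``since the quotient is free'' --- is circular as stated: quotients of free abelian groups need not be free, and the freeness of this particular quotient is precisely what has to be argued (it is equivalent to the splitting you want). The paper closes the gap by exhibiting a left-splitting concretely: the evaluation homomorphism $C(\Zsstar,\Z)\to\Z$, $f\mapsto -f(1)$, composed with the inclusion $\Z\to C(\Zsstar,\Z)$, $k\mapsto(x\mapsto -k)$, is the identity, so the splitting lemma gives $C(\Zsstar,\Z)\cong\Z\oplus C(\Zsstar,\Z)/\Z$, which is exactly what you need (and confirms, after the fact, that the quotient is free). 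With that one repair in place, your argument coincides with the paper's; the sketched Pimsner--Voiculescu alternative via the full corner is plausible but would need its own work and is not what the paper does.
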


\begin{proof}
The short exact sequence 
\begin{equation*}
0 \to I_s \to HS(s) \to C(\R / \Z) \to 0
\end{equation*}
induces the following six term exact sequence in $K$-Theory:
\begin{equation*}
\begin{tikzcd}
 K_0(I_s) \arrow{r}   & K_0(HS(s))  \arrow{r} & K_0(C(\R / \Z)) \arrow{d}{\textup{ exp }}    \\
 K_1(C(\R / \Z)) \arrow{u}{\textup{ ind }} & K_1(HS(s)) \arrow{l} & K_1(I_s) \arrow{l}
\end{tikzcd}
\end{equation*}
Since $I_s \cong C(\Zsstar) \otimes \mathcal{K}$ by Theorem \ref{ideal isomorphism}, stability of $K$-Theory shows that 
\begin{equation*}
K_0(I_s) \cong K_0(C(\Zsstar)) \quad \text{and} \quad K_1(I_s) \cong K_1(C(\Zsstar)). 
\end{equation*}
Denote by $C(\Zsstar,\Z)$ the group of continuous functions on $\Zsstar$ with values in $\Z$. Since $\Zsstar$ is totally disconnected, we have that
\begin{equation*}
K_0(I_s) \cong C(\Zsstar,\Z)
\end{equation*}
by Exercise 3.4 in \cite{RLL}. Moreover, from Example III.2.5 in \cite{KD}, $C(\Zsstar)$ is an approximately finite dimensional algebra, and hence 
\begin{equation*}
K_1(I_s) \cong 0. 
\end{equation*}
Since $K_1(C(\R / \Z)) \cong \Z$ is generated by the class of the generating unitary $x \mapsto e^{2 \pi i x}$, which lifts to the isometry $V \in HS(s)$, it follows from Proposition 9.2.4 in \cite{RLL} that 
\begin{equation*}
\textup{ind}([x \mapsto e^{2 \pi i x}]) = -[I - VV^*]_0. 
\end{equation*}
In particular, the index map is injective. By exactness at $K_1(C(\R / \Z))$, we see that the map $K_1(HS(s)) \to K_1(C(\R/\Z))$ is trivial. On the other hand $K_1(I_s) \cong 0$, and so by exactness at $K_1(HS(s))$ we see that the map  $K_1(HS(s))\to K_1(C(\R/\Z))$ is injective. It follows that 
\begin{equation*}
K_1(HS(s)) \cong 0.
\end{equation*}
Rewriting the six term exact sequence above with the computed terms, we obtain the following exact sequence
\begin{equation*}
0 \to \Z \to C(\Z_s^\times, \Z) \to K_0(HS(s)) \to \Z \to 0,
\end{equation*}
where the $\Z$ on the right side of the sequence corresponds to $K_0(C(\R / \Z)),$ and is generated by the class of the identity. The range of the map $\Z \to C(\Zsstar , \Z)$ is generated by $-[I - VV^*]_0$. which is seen by direct calculation to correspond to the constant function $-1$ in $C(\Zsstar, \Z)$. Taking the quotient, we obtain a short exact sequence
\begin{equation*}
0 \to C(\Zsstar,\Z)/\Z \to K_0(HS(s)) \to \Z \to 0. 
\end{equation*}
The map $\Z \to K_0(HS(s))$ given by $[I]_0 \to [I_0]$ is a right inverse for the map $K_0(HS(s)) \to \Z$. This is easily seen since the surjective homomorphism $HS(s) \to C(\R/\Z)$ is unital. It follows by the splitting lemma that 
\begin{equation*}
K_0(HS(s)) \cong C(\Zsstar,\Z)/\Z \oplus \Z. 
\end{equation*}
However, the short exact sequence of abelian groups 
\begin{equation*}
0 \to \Z \to C(\Zsstar, \Z) \to C(\Zsstar,\Z)/\Z \to 0,
\end{equation*}
wherein the map $\Z \to C(\Zsstar, \Z)$ is given by $k \mapsto (x\mapsto -k)$, is easily seen to be left split via the evaluation homomorphism $C(\Zsstar , \Z) \to \Z$ given by $f \mapsto -f(1)$. From this and the splitting lemma we conclude that 
\begin{equation*}
K_0(HS(s)) \cong C(\Zsstar,\Z)/\Z \oplus \Z \cong C(\Zsstar , \Z). 
\end{equation*}
This completes the proof. 
\end{proof}

We close the section with a comment on $K$-Homology of $HS(s)$. Since $C(\Zsstar , \Z)$ is free (see exercise 7.7.5 in \cite{HR}), using the Universal Coefficient Theorem \cite{RS UCT} we obtain 
$$K^1(HS(s)) \cong 0 \quad \text{and} \quad K^0(HS(s)) \cong \Hom(C(\Zsstar , \Z),\Z).$$

\section{Appendix}
The main objective of this appendix is to review some generalities about
the concept of crossed products by endomorphisms under special hypotheses first considered in \cite{Mu}.

Let $A$ be a unital C$^*$-algebra and $\alpha : A \to A$ be a monomorphism with hereditary range. 
Clearly $\alpha(1)$ is a projection and notice that $\textrm{Ran}(\alpha)$, the range of $\alpha$, is contained in $\alpha(1)A\alpha(1)$. It easily follows, see \cite{E} Proposition 4.1, that if $\textrm{Ran}(\alpha)$ is a hereditary subalgebra of $A$ then we have 
$$\textrm{Ran}(\alpha)=\alpha(1)A\alpha(1).$$ 
Thus we can define a left inverse $\beta$ of $\alpha$ by:
\begin{equation}\label{beta_def_ref}
\beta(a):=\alpha^{-1}(\alpha(1)a\alpha(1)).
\end{equation}
It is easy to see that we have the following properties:
\begin{equation*}
\beta\alpha(a)=a,\, \alpha\beta(a)=\alpha(1)a\alpha(1), \textrm{ and } \beta(1)=1.
\end{equation*}
Moreover,  the map $\beta: A\to A$ is linear, continuous, positive and has the ``transfer" property:
\begin{equation*}
\beta(\alpha(a)b)=a\beta(b),
\end{equation*}
and thus is an example of Exel's transfer operator \cite{E}. Because we have $\alpha\beta(a)=\alpha(1)a\alpha(1)$ it is a complete transfer operator of \cite{BL} and in particular a non-degenerate transfer operator of \cite{E}, i.e. it satisfies $\alpha(\beta(1))=\alpha(1)$. Also notice that if the algebra $A$ is commutative, then $\beta$ is also an endomorphism.

Stacey in \cite{St} (and Murphy in \cite{Mu}) defines the crossed product $A\rtimes_\alpha\N$ as the universal unital C$^*$-algebra with generators $M_a$ for $a\in A$ such that the map $a\mapsto M_a$ is a $*$-homomorphism
, and an isometry $V$,   subject to the relation:
\begin{equation*}
M_{\alpha(a)} = VM_aV^*, \ a \in A.
\end{equation*}

This implies that $M_{\alpha(1)}=VV^*$. We also have
\begin{equation*}
V^*M_aV = M_{\beta(a)},
\end{equation*}
which follows from the following calculation:
\begin{equation*}
V^*M_aV = V^*VV^*M_aVV^*V = V^*M_{\alpha(1)}M_{a}M_{\alpha(1)}V = V^*M_{\alpha\beta(a)}V = V^*VM_{\beta(a)}V^*V = M_{\beta(a)}.
\end{equation*}

If the algebra $A$ is commutative we get a stronger relation:
\begin{equation}\label{strong_rel}
M_aV =  VM_{\beta(a)}.
\end{equation}
Indeed, we can manipulate as follows:
\begin{equation*}
VM_{\beta(a)}=VV^*M_aV=M_{\alpha(1)}M_aV=M_aM_{\alpha(1)}V=M_aVV^*V=M_aV.
\end{equation*}

Similarly to crossed products by automorphisms the map $a \mapsto M_a$ is injective by Corollary 1.3 in \cite{boyd1993faithful}.  For the remainder of the appendix we identify $a$ with $M_a$ and drop the $M_a$ notation.

Given a unital C$^*$-algebra $A$, an endomorphism $\alpha : A \to A$, and a transfer operator $\beta$, the Exel’s crossed product is the universal C$^*$-algebra generated by a copy of $A$ and element $V$ subject to the relations:
\begin{enumerate}
\item $Va=\alpha(a)V$
\item $V^*aV=\beta(a)$
\item (Redundancy Condition) If for $a\in\overline{A\alpha(A)A}$ and $k\in\overline{AVV^*A}$ we have $abV=kbV$ for all $b\in A$ then $a=k$.
\end{enumerate}
Notice that, compared to Stacey's definition, $V$ does not have to be an isometry, unless $\beta(1)=1$. If $\beta(1)=1$ and so $V$ is an isometry then the first condition of Exel's crossed product is weaker than Stacey's condition $VaV^*=\alpha(a)$. On the other hand, there is no analog of condition 2 above in Stacey's scheme, where in general $V^*AV$ is not contained in $A$. Notice also that the less intuitive redundancy condition is empty if the two ideals $\overline{A\alpha(A)A}$ and $\overline{AVV^*A}$ are equal. This is because if $abV=0$ for all $b\in A$ then $abVV^*c=0$ for all $b,c\in A$, so $ax=0$ for all $x\in\overline{AVV^*A}$ and in particular for $x=a^*$, which implies $a=0$.

Interestingly, if $\alpha$ is a monomorphism with hereditary range then both seemingly quite different concepts of crossed products by endomorphisms coincide if we take $\beta$ to be the transfer operator given by equation \eqref{beta_def_ref}.  Then, as we have seen above, Stacey's conditions imply the first two of Exel's conditions. Also, since $VV^*=\alpha(1)$, we have
\begin{equation*}
\overline{AVV^*A}=\overline{A\alpha(1)A}\subseteq \overline{A\alpha(A)A}=\overline{A\alpha(1)A\alpha(1)A}\subseteq
\overline{A\alpha(1)A}.
\end{equation*}
Consequently, the two ideals $\overline{A\alpha(A)A}$ and $\overline{AVV^*A}$ are equal and the redundancy condition is trivially satisfied.

Conversely, starting with Exel's conditions for a monomorphism $\alpha$ with hereditary range and the transfer operator $\beta$ given by equation \eqref{beta_def_ref} we see that $\beta(1)=1$ and so $V$ is an isometry. The other Stacey condition follows from the redundancy condition since $\alpha(a)\in\overline{A\alpha(A)A}$, $VaV^*=aVV^*\in\overline{AVV^*A}$ and we have:
\begin{equation*}
VaV^*bV=Va\beta(b)=\alpha(a)\alpha\beta(b)V=\alpha(a)\alpha(1)b\alpha(1)V=\alpha(a)bV,
\end{equation*}
where we used $\alpha(1)V=V\cdot 1=V$.

To understand the structure of the crossed product C$^*$-algebra $A\rtimes_\alpha\N$,  the following Fourier type series  expansions are frequently useful. 
Using the crossed product commutation relations one can arrange the polynomials in $V$, $V^*$ and $a$ so that $a$'s are in front of powers of $V$ and powers of $V^*$ are in front of $a$'s. This leads to the following observation:
the vector space  $\mathcal{F}(A,V)$ consisting of finite sums
\begin{equation}\label{Fourier1}
x=\sum_{n\ge0}a_n V^n + \sum_{n<0}(V^*)^{-n} a_n\,,
\end{equation}
where $a_n\in A$, is a dense $*$-subalgebra of $A\rtimes_\alpha\N$.
Additionally, using $V^n = V^n(V^*)^nV^n$ and its adjoint
one can always choose coefficients $a_n$ such that
\begin{equation*}
a_n=a_nV^n(V^*)^n ,\ n\geq 0 \textrm{ and } a_n=V^{-n}(V^*)^{-n}a_n,\ n<0.
\end{equation*}
Moreover such coefficients are unique. This follows from the existence of an expectation in $A\rtimes_\alpha\N$.
It is defined using a one-parameter group of automorphisms given on generators by formulas $\rhot(V)=e^{2\pi i\theta}V$ and $\rhot(a)=a$ and extending by universality to the whole crossed product $A\rtimes_\alpha\N$. 
Define the map $E:A\rtimes_\alpha\N\to A\rtimes_\alpha\N$ by
\begin{equation*}
E(a)=\int_0^1 \rhot(a)\,d\theta \,.
\end{equation*}
Clearly, $E$ is an expectation in $A\rtimes_\alpha\N$ onto $A\subseteq A\rtimes_\alpha\N$.
For any $x\in \mathcal{F}(A,V)$ given by a finite sum as in equation \eqref{Fourier1}
we compute directly to get
\begin{equation*}
a_n = \left\{
\begin{aligned}
&E(x(V^*)^{n}) &&\textrm{ if }n\ge0\\
&E(V^{-n}x) &&\textrm{ if }n<0\,.
\end{aligned}\right.
\end{equation*}

Alternatively, if the algebra $A$ is commutative and we have the stronger condition, equation \eqref{strong_rel}, then we can arrange the polynomials in $V$, $V^*$ and $a$ in the opposite order to  equation \eqref{Fourier1}. Then the $*$-subalgebra $\mathcal{F}(A,V)$ can be described as consisting of finite sums
\begin{equation}\label{Fourier2}
x=\sum_{n\ge0}V^n b_n + \sum_{n<0}b_n (V^*)^{-n}\,
\end{equation}
with $b_n\in A$.  The advantage of equation \eqref{Fourier2} over equation \eqref{Fourier1} is that the coefficients $b_n$ are uniquely determined by $x$ and are given by the formulas:
\begin{equation*}
b_n = \left\{
\begin{aligned}
&E((V^*)^{n}x) &&\textrm{ if }n\ge0\\
&E(xV^{-n}) &&\textrm{ if }n<0\,.
\end{aligned}\right.
\end{equation*}
Notice that by the properties of expectations we have the following norm estimates for coefficients in   equations \eqref{Fourier1} and \eqref{Fourier2}:
\begin{equation*}
||a_n||\leq ||x||  \textrm{ and } ||b_n||\leq ||x||
\end{equation*}
for all $n\in\Z$.

Suppose $\pi: A\rtimes_\alpha\N\to B(H)$ is a representation of $A\rtimes_\alpha\N$ in a Hilbert space $H$. We say that $\pi$ satisfies the O'Donovan conditions if
\begin{enumerate}
\item $\pi$ restricted to $A$ is a faithful representation of the C$^*$-algebra $A$.
\item for every $x\in\mathcal{F}(A,V)$ we have:
\begin{equation*}
||\pi(a_0)||= ||\pi(b_0)||\leq ||\pi(x)||.
\end{equation*}
\end{enumerate}
A key result from \cite{boyd1993faithful} says that if a representation $\pi: A\rtimes_\alpha\N\to B(H)$ satisfies the O'Donovan conditions then it is a faithful representation of the crossed product C$^*$-algebra $A\rtimes_\alpha\N$.

\end{document}